\newtheorem{theorem}{Theorem}
\newtheorem{proposition}[theorem]{Proposition}
\newtheorem{lemma}[theorem]{Lemma}
\newtheorem*{theorem*}{Theorem}
\theoremstyle{definition}
\newtheorem{definition}{Definition}
\newtheorem{remark}[theorem]{Remark}
\def\Xint#1{\mathchoice
{\XXint\displaystyle\textstyle{#1}}%
{\XXint\textstyle\scriptstyle{#1}}%
{\XXint\scriptstyle\scriptscriptstyle{#1}}%
{\XXint\scriptscriptstyle\scriptscriptstyle{#1}}%
\!\int}
\def\XXint#1#2#3{{\setbox0=\hbox{$#1{#2#3}{\int}$ }
\vcenter{\hbox{$#2#3$ }}\kern-.6\wd0}}
\def\dashint{\Xint-}
\definecolor{Yellow}{rgb}{0.95,0.9,0.0} 
\definecolor{Red}{rgb}{0.8,0.1,0.1}
\definecolor{Green}{rgb}{0.1,0.65,0.2}
\definecolor{Blue}{rgb}{0.1,0.1,0.8}
\definecolor{Purple}{rgb}{0.7,0.1,0.7}
\definecolor{Grey}{rgb}{0.6,0.6,0.6}
\definecolor{YELLOW}{rgb}{0.95,0.9,0.0} 
\definecolor{RED}{rgb}{0.8,0.1,0.1}
\definecolor{GREEN}{rgb}{0.25,0.65,0.1}
\definecolor{BLUE}{rgb}{0.1,0.1,0.8}
\definecolor{PURPLE}{rgb}{0.7,0.1,0.7}
\newcommand{\supp}{\operatorname{supp}} 
\newcommand{\sign}{\operatorname{sign}}
\newcommand{\dist}{\operatorname{dist}} 
\DeclareMathOperator*{\esssup}{ess\,sup}
\newcommand{\Rd}[1][d]{{\mathbb{R}^{#1}}}
\newcommand{\dH}[1][d-1]{d\mathcal{H}^{#1}}
\newcommand{\Tgood}{\mathscr{T}_{\mathrm{good}}}
\newcommand{\Tbad}{\mathscr{T}_{\mathrm{bad}}}
\newcommand{\p}{\partial}
\newcommand{\A}{\mathscr{A}}
\renewcommand{\vec}[1]{{\operatorname{#1}}}
\newcommand{\n}{\vec{n}}
\newcommand{\ta}{\vec{t}}
\newcommand{\MC}{\vec{H}}
\newcommand{\V}{\vec{V}}
\newcommand {\jump}[1] {[\![ #1 ]\!]}
\newcommand{\pvec}{\vec{p}}
\begin{document}

\title[Weak-strong uniqueness of the Mullins--Sekerka equation]
{A weak-strong uniqueness principle for the Mullins--Sekerka equation}

\author{Julian Fischer}
\address{Institute of Science and Technology Austria (IST Austria), Am~Campus~1, 
3400 Klosterneuburg, Austria}
\email{julian.fischer@ist.ac.at}

\author{Sebastian Hensel}
\address{Hausdorff Center for Mathematics, Universit{\"a}t Bonn, Endenicher Allee 62, 53115 Bonn, Germany}
\email{sebastian.hensel@hcm.uni-bonn.de}

\author{Tim Laux}
\address{Faculty of Mathematics, University of Regensburg, Universit\"{a}tsstrasse 31, 93053 Regensburg, Germany}
\email{tim.laux@ur.de}

\author{Theresa M.\ Simon}
\address{Westf{\"a}lische Wilhelms-Universit{\"a}t M{\"u}nster, Orl{\'e}ansring 10, 48149 M{\"u}nster, Germany}
\email{theresa.simon@uni-muenster.de}

\begin{abstract}
We establish a weak-strong uniqueness principle for the two-phase Mullins--Sekerka equation 
in ambient dimension $d = 2$ and~$3$: 
As long as a classical solution to the evolution problem exists, any weak De~Giorgi type varifold solution
(see for this notion the recent work of Stinson and the second author, 
Arch.\ Ration.\ Mech.\ Anal.\ \textbf{248}, 8, 2024)
must coincide with it. In particular, in the absence of geometric singularities such weak 
solutions do not introduce a mechanism for (unphysical) non-uniqueness. We also derive a stability estimate 
with respect to changes in the data. Our method is based on the notion of relative entropies for interface 
evolution problems, a reduction argument to a perturbative graph setting,
%(which is the only step in our argument exploiting in an essential way the planar setting), 
and a stability analysis in this perturbative regime relying crucially on the gradient flow structure of
the Mullins--Sekerka equation.
\end{abstract}

%\keywords{}

\maketitle
\tableofcontents

\section{Introduction}

\subsection{Context}
This work addresses the uniqueness and stability properties of the Mullins--Sekerka 
equation, a free boundary problem modeling volume-preserving phase separation 
and coarsening processes for various physical situations. The model was
first introduced by Mullins and Sekerka in~\cite{Mullins1963} and can be derived from thermodynamic 
principles~\cite{Gurtin}. It also arises as the sharp-interface limit of the Cahn--Hilliard
equation (\cite{AlikakosBatesChen}, \cite{Chen_weak} and~\cite{Le}).

The Mullins--Sekerka equation can be viewed as a geometric third-order partial
differential equation. This viewpoint is useful for constructing classical 
solutions (\cite{Xinfu1996} and~\cite{Escher1997}),
studying long-term asymptotics via a center manifold~\cite{Escher1998}, and it also directly predicts
the typical scaling behavior of the induced coarsening process.
Naturally, as a
higher-order evolution equation, the Mullins--Sekerka equation does not admit a
comparison principle, which makes uniqueness questions subtle. Additionally, certain 
symmetric singularities can lead to physical non-uniqueness. However, it is
conjectured that before the onset of singularities, the evolution is unique.
We prove this uniqueness property in a large class of weak solutions by controlling a
suitable relative entropy functional. As our relative entropy controls the distance
of the weak and strong solutions, we additionally derive the stability of solutions
with respect to perturbations of their initial data.

Our viewpoint is variational, interpreting the Mullins--Sekerka equation as a
gradient flow. Albeit a well-appreciated fact in the community and obvious for
smooth solutions, the rigorous definition and an existence proof of weak solutions based on the gradient
flow structure has only been given recently by Stinson and the second author~\cite{Hensel2022}.
In the present work, we show that it is this very structure that guarantees the
weak-strong uniqueness of the flow. It is natural that the gradient flow structure
of the Mullins--Sekerka equation plays a crucial role in its uniqueness properties
as it also lies at the heart of many other properties of the equation, such as the
long-term asymptotics (\cite{JulinMoriniPonsiglioneSpadaro} and~\cite{otto2023convergence}) 
or its (formal) recovery as a sharp interface limit of
the Cahn--Hilliard equation~\cite{Le}.

Next to~\cite{Hensel2022}, there are several other weak solution concepts for the Mullins--Sekerka equation. 
The pioneering work~\cite{Luckhaus1995} of Luckhaus and Sturzenhecker constructs weak
solutions via an implicit time discretization. They rely on an assumption on the
convergence of energies that was later be removed by R\"{o}ger~\cite{Roeger2005}, using a result of
Sch\"{a}tzle~\cite{Schaetzle2001}. We point out that our results also apply to the well-known weak
solutions constructed by Luckhaus and Sturzenhecker satisfying a sharp energy-dissipation 
inequality.

\subsection{Weak-strong uniqueness principles for interface evolution}
In general, there are two theoretical frameworks for proving weak-strong uniqueness 
for interface evolution equations. Evans and Spruck~\cite{EvansSpruck} and independently
Chen, Giga and Goto~\cite{ChenGigaGoto} provided a notion of viscosity solution for mean curvature
flow. This powerful framework relies on the comparison principle and is therefore
only applicable to certain second-order geometric evolution equations.

More recently, the authors have developed a novel framework based on the dissipative 
nature of many interface evolution problems. The key idea is to monitor
a suitable relative entropy functional that controls the distance between a weak
and a strong solution. For the two-phase Navier--Stokes equation, this was carried
out in~\cite{FischerHensel}, and for multiphase mean curvature flow in~\cite{Fischer2020a}. 
Also for other geometric 
motions that do not admit a geometric comparison principle, this method has
been shown to be applicable, cf.\ (\cite{kroemer2022uniqueness} 
and \cite{Laux}), and it even gives new results for classical
problems which admit a comparison principle (\cite{HenselLaux} and~\cite{LauxStinsonUllrich}).

In the present work, we face several novel difficulties due to the nonlocal structure
of the Mullins--Sekerka equation. To overcome them, we first derive a preliminary
version of our stability estimate of the relative entropy. When estimating the right-hand 
side, a key challenge is to define an appropriate chemical potential to compare
the dissipation of the weak solution to the one of the strong solution. To this end, we
distinguish two cases. If at a given time, the weak solution cannot be parametrized
as a graph over the strong solution, we show that the dissipation of the weak
solution dominates all other terms. On the other hand, at times when the weak
solution can be written as a graph over the strong solution, we are in a perturbative
setting. We construct an auxiliary chemical potential which is harmonic in the two
phases of the weak solution but attains (an extension of) the mean curvature of the
strong solution as its boundary condition on the weak interface. Finally, we use the
stability of the Dirichlet-to-Neumann operator to derive a stability estimate for the
relative entropy also in the perturbative case.

\subsection{Description of the model}
Let $d \in \{2,3\}$, let $T \in (0,\infty)$ be a finite
time horizon, let $\Omega \subset \Rd$
be a bounded domain with $C^2$-boundary~$\p\Omega$, 
and let $A := \bigcup_{t\in [0,T)} A(t) {\times} \{t\}$
be the space-time track of an evolving family of subsets $A(t) \subset \Omega$,
all of which are sets of finite perimeter in~$\Rd$ such that it holds $\p A(t) \cap \Omega
= \p^* A(t) \cap \Omega$ for all $t \in [0,T)$.  
We then consider the Mullins--Sekerka problem
\begin{subequations}
\begin{align}
\label{eq:MS1}
-\Delta u &= 0
&& \text{in } \bigcup_{t \in (0,T)} (\Omega \setminus \p A(t)) \times \{t\},
\\
\label{eq:MS2}
-\jump{(\n_{\partial A} \cdot \nabla)u}\n_{\partial A} &= \V_{\partial A}
&& \text{on } \bigcup_{t \in (0,T)} (\p A(t) \cap \Omega) \times \{t\},
\\
\label{eq:MS3}
u \n_{\partial A} &= \MC_{\partial A}
&& \text{on } \bigcup_{t \in (0,T)} (\p A(t) \cap \Omega) \times \{t\},
\\
\label{eq:MS4}
(\n_{\p \Omega} \cdot \nabla)u &= 0
&& \text{on } \bigcup_{t \in (0,T)}  \p\Omega \times \{t\}.
\end{align}
\end{subequations}
Here, for all $t \in [0,T)$ we denote by $\n_{\partial A}(\cdot,t)$ 
the unit normal vector field along the interface $\p A(t) \cap \Omega$
pointing inside~$A(t)$, by $\MC_{\partial A}(\cdot,t)$ the mean curvature
vector field along $\p A(t) \cap \Omega$, by $\V_{\partial A}(\cdot,t)$
the normal velocity of $\p A(t) \cap \Omega$,
and by $\n_{\p\Omega}$ the unit normal vector field along
the domain boundary~$\p\Omega$ pointing inside~$\Omega$. Furthermore, 
for all $t \in [0,T)$ the quantity $\jump{(\n_{\partial A} \cdot \nabla)u}(\cdot,t)$
denotes the jump of $(\n_{\partial A} \cdot \nabla)u$ across $\p A(t) \cap \Omega$,
with the orientation chosen such that for all sufficiently 
regular vector fields $f\colon\overline{\Omega} \to \Rd$ and
scalar fields $\eta\colon\overline{\Omega} \to \Rd[]$ it holds
(dropping for notational simplicity the time variable)
\begin{equation}
\label{eq:IBP}
\begin{aligned}
- \int_{\Omega \setminus \p A} (\nabla \cdot f) \eta \,dx
&= \int_{\Omega} f \cdot \nabla \eta \,dx
+ \int_{\p A \cap \Omega} \jump{\n_{\partial A} \cdot f} \eta \,\dH
\\&~~~
+ \int_{\p\Omega} (\n_{\p\Omega} \cdot f) \eta \,\dH.
\end{aligned}
\end{equation}

The associated energy is given by the interface area functional
\begin{align*}
E[A(t)] := \mathcal{H}^{d-1}(\partial A(t) \cap \Omega).
\end{align*}
Assuming that the evolving interface intersects the domain
boundary only orthogonally\footnote{At the level of a strong
solution, we will always assume in the present work that the evolving interface
is compactly supported within the domain. We will therefore be able to neglect
almost all problems in connection with potential contact point dynamics.},
we may compute at least on a formal level
(dropping again for notational simplicity the time variable)
\begin{align*}
\frac{d}{dt} E[A] &= - \int_{\p A \cap \Omega} \MC_{\partial A} \cdot \V_{\partial A} \,\dH
\\&\hspace*{-0.7ex}
\stackrel{\eqref{eq:MS3}}{=}
- \int_{\p A \cap \Omega} u (\n_{\partial A} \cdot \V_{\partial A}) \,\dH
\\&\hspace*{-0.7ex}
\stackrel{\eqref{eq:MS2}}{=}
\int_{\p A \cap \Omega} u \jump{(\n_{\partial A} \cdot \nabla u)} \,\dH
\\&\hspace*{-4.2ex}
\stackrel{\eqref{eq:IBP},\eqref{eq:MS1},\eqref{eq:MS4}}{=}
- \int_{\Omega} |\nabla u|^2 \,dx,
\end{align*}
which reveals the formal gradient flow structure of the equation.
Next to this energy dissipation relation, we also have formally
\begin{align}\label{eq:time_derivative_A}
\begin{split}
\frac{d}{dt} \int_{A} \zeta \,dx &= 
\int_{A} \p_t \zeta \,dx - \int_{\p A \cap \Omega} \zeta (\n_{\partial A} \cdot \V_{\partial A}) \,\dH
\\&\hspace*{-0.7ex}
\stackrel{\eqref{eq:MS2}}{=}
\int_{A} \p_t \zeta \,dx + \int_{\p A \cap \Omega} \zeta \jump{(\n_{\partial A} \cdot \nabla u)} \,\dH
\\&\hspace*{-4.2ex}
\stackrel{\eqref{eq:IBP},\eqref{eq:MS1},\eqref{eq:MS4}}{=}
\int_{A} \p_t \zeta \,dx - \int_{\Omega} \nabla u \cdot \nabla \zeta \,dx
\end{split}
\end{align}
for all sufficiently regular test functions $\zeta \colon \overline{\Omega} \times [0,T) \to \Rd[]$.
In particular, it follows that the volume of the evolving phase is preserved under the flow:
\begin{align*}
\int_{A(t)} 1 \,dx = \int_{A(0)} 1 \,dx \quad \text{for all } t \in [0,T).
\end{align*}

Definition~\ref{DefinitionWeakSolution} below provides a suitable weak
formulation of the Mullins--Sekerka problem~\eqref{eq:MS1}--\eqref{eq:MS4}
in the framework of sets of finite perimeter and varifolds. Strong solutions of~\eqref{eq:MS1}--\eqref{eq:MS4}
will be modelled on the basis of a smoothly evolving phase $\A = \bigcup_{t \in [0,T^*)}
\A(t) \times \{t\}$ with time horizon $T^* \in (0,T)$, where for each $t \in [0,T^*)$ the 
open set $\A(t) \subset \Omega$ is assumed to consist of finitely many
simply connected components each of which is compactly supported within~$\Omega$.

\section{Main result}
Our main result establishes the uniqueness of classical solutions in a large class
of weak solutions. As our method is a relative-entropy inequality, we additionally
obtain stability for perturbations of the initial data.

\begin{theorem}[Weak-strong uniqueness and quantitative stability for the Mullins--Sekerka equation]
\label{theo:mainResult}
Let $d \in \{2,3\}$.
Consider $A = \bigcup_{t \in (0,T)} A(t) {\times} \{t\}$
and $\mu=(\mu_t)_{t \in (0,T)}$ such that $(A,\mu)$ is a De~Giorgi 
type varifold solution for the Mullins-Sekerka 
problem~\emph{\eqref{eq:MS1}--\eqref{eq:MS4}} in the sense 
of Definition~\ref{DefinitionWeakSolution} with time horizon~$T\in (0,\infty)$
and initial data~$A(0)$. Furthermore, let a smoothly evolving phase $\A = \bigcup_{t \in [0,T_*)}
\A(t) \times \{t\}$ with time horizon $T_* \in (0,T)$ be given, the motion law of which is
subject to Mullins--Sekerka flow~\emph{\eqref{eq:MS1}--\eqref{eq:MS4}}. We finally
assume that for all $t \in [0,T_*)$ the open set $\A(t) \subset \Omega$ is 
compactly supported within~$\Omega$.

There exists an error functional $(0,T_*) \ni t \mapsto E[A,\mu|\A](t) \in L^\infty_{loc}((0,T_*);[0,\infty))$ 
between the two solutions such that
\begin{equation}
\begin{aligned}
\label{eq:baseline_coercivity}
&E[A,\mu|\A](t) = 0 
\\&
\Longleftrightarrow \mathcal{L}^d\big(A(t)\Delta \A(t)\big) = 0,\,
\mu_t = \mathcal{H}^{d-1} \llcorner \big(\p^*A(t) \cap \Omega) 
\otimes (\delta_{\n_{\p^*A}(x,t)})_{x \in (\p^*A(t) \cap \Omega)},
\end{aligned}
\end{equation}
and for almost every $T' \in (0,T_*)$ there exists a constant $C(T') > 0$ such that
\begin{align}
\label{eq:stability_estimate}
E[A,\mu|\A](T') \leq E[A,\mu|\A](0) + 
C(T') \int_{0}^{T'} E[A,\mu|\A](t) \,dt.
\end{align}
In particular,
\begin{equation}
\begin{aligned}
\label{eq:uniqueness}
&E[A,\mu|\A](0) = 0 
\\&
\Longrightarrow
\mathcal{L}^d\big(A(t)\Delta \A(t)\big) = 0,\,
\mu_t = \mathcal{H}^{d-1} \llcorner \big(\p^*A(t) \cap \Omega) 
\otimes (\delta_{\n_{\p^*A}(x,t)})_{x \in (\p^*A(t) \cap \Omega)} 
\\&~~~~~\text{ for a.e.\ } t \in (0,T_*).
\end{aligned}
\end{equation}
\end{theorem}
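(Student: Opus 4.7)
The plan is to build a relative entropy functional $E[A,\mu|\A](t)$ that simultaneously controls the bulk symmetric difference $\mathcal{L}^2(A(t)\Delta\A(t))$ and a tilt-excess of the varifold $\mu_t$ relative to the smooth interface $\p\A(t)$. Following the strategy of \cite{Fischer2020a}, I would take
\begin{align*}
E[A,\mu|\A](t) := \mu_t(\Omega) - \int \xi(x,t)\cdot p \,d\mu_t(x,p) + \int_{\Omega} \beta(\cdot,t)\,(\chi_{A(t)} - \chi_{\A(t)})\,\dx,
\end{align*}
where $\xi$ is a smooth, compactly supported (in a tubular neighborhood of $\p\A$) extension of $\n_{\p\A}$ with $|\xi|\le 1-c\,\sigdist(\cdot,\p\A)^2$, and $\beta$ is an odd, bounded truncation of $\sigdist(\cdot,\p\A)$. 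The coercivity~\eqref{eq:baseline_coercivity} is then built into the definition: the first two terms together control $\tfrac12\int |p-\xi|^2\,d\mu_t$ plus a multiplicity-excess, while the third controls the bulk error, and vanishing of each summand forces the conclusion.

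The next step is to differentiate $E[A,\mu|\A]$ in time. The energy term $\mu_t(\Omega)$ contributes the weak dissipation $-\int_\Omega|\nabla u|^2\,\dx$ from the sharp De~Giorgi type energy-dissipation inequality built into the solution concept. The $\xi$-term, processed via the varifold transport identity together with~\eqref{eq:MS2}--\eqref{eq:MS3}, produces a cross-term of the form $\int_\Omega \nabla u\cdot\nabla v\,\dx$ for a suitable auxiliary comparison potential $v$, plus remainders that are quadratic in $p-\xi$ and hence bounded by $C(t)\,E[A,\mu|\A](t)$. The bulk term, through~\eqref{eq:time_derivative_A} applied with $\zeta=\beta$, yields the further coupling $-\int_\Omega\nabla u\cdot\nabla\beta\,\dx$. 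Completing the square, one arrives at a preliminary estimate of the schematic form
\begin{align*}
\frac{d}{dt} E[A,\mu|\A](t) + \tfrac12\int_\Omega |\nabla u - \nabla v|^2\,\dx \le C(t)\,E[A,\mu|\A](t),
\end{align*}
provided $v$ is chosen so that $\nabla v$ faithfully encodes the chemical potential of the strong solution along the weak interface.

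The principal obstacle, and the only place where $d=2$ enters in an essential way, is the construction of $v$ and the control of the residual error. I would dichotomize each time $t\in(0,T_*)$ as follows. If $\p^*A(t)\cap\Omega$ cannot be written as a $C^{1,\alpha}$-small graph over $\p\A(t)$ in the tubular neighborhood, a planar geometric argument (non-graphicality of a closed curve over a reference curve forces either a large tilt-excess or a large bulk deviation) bounds $E[A,\mu|\A](t)$ from below by an absolute constant, so the remaining contributions can be absorbed into the dissipation with the trivial choice $v=0$. In the complementary graph regime, parametrizing the weak interface as $\{y + h(y,t)\n_{\p\A}(y,t):y\in\p\A(t)\}$ for a small height function $h$, I would define $v$ to be harmonic in $\Omega\setminus\p^*A(t)$ with Dirichlet datum on $\p^*A(t)\cap\Omega$ given by the pullback along the graph of the harmonic extension of the strong mean curvature, and with vanishing conormal derivative on $\p\Omega$. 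Quantitative stability of the Dirichlet-to-Neumann operator under the small graphical perturbation then delivers the required bound on $\int_\Omega|\nabla u-\nabla v|^2\,\dx$ by $E[A,\mu|\A](t)$.

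Once the preliminary estimate is in hand, Gronwall's inequality yields~\eqref{eq:stability_estimate}, and~\eqref{eq:uniqueness} follows by setting $E[A,\mu|\A](0)=0$ and invoking~\eqref{eq:baseline_coercivity}. I expect the hardest step to be precisely the quantitative Dirichlet-to-Neumann comparison: naive perturbative bounds control $\|\nabla u - \nabla v\|_{L^2}$ in terms of $\|h\|_{H^{3/2}}$, which cannot be recovered from the relative entropy alone (which only dominates $\|\p_s h\|_{L^2}^2$ together with the bulk error). Closing the Gronwall loop therefore requires exploiting subtle cancellations coming from the gradient-flow identity~\eqref{eq:time_derivative_A}, together with a careful design of $\xi$ and $\beta$ so that the tilt and bulk parts of $E[A,\mu|\A]$ jointly dominate the perturbation error already to first order in $h$.
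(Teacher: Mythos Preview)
Your overall architecture matches the paper's: relative entropy with tilt and bulk parts, an auxiliary harmonic potential on the weak geometry with boundary data built from the strong curvature, completion of squares, a good/bad-time splitting, Gr\"onwall. There is, however, a real gap in your dichotomy. You assert that failure of $\partial^*A(t)$ to be a $C^{1,\alpha}$-small graph over $\partial\mathscr{A}(t)$ forces a uniform lower bound on the error functional; this is false. Smallness of $E_{rel}$ controls only $\int(1-\n_{\partial^*A}\cdot\xi)\,d\mathcal{H}^1\sim\|h'\|_{L^2}^2$, not $\|h'\|_{L^\infty}$, and does not even rule out stray components (a tiny extra circle of radius $\epsilon$ inside the tubular neighbourhood costs only $O(\epsilon)$ in $E_{rel}+E_{vol}$, yet destroys graphicality). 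The paper therefore introduces a \emph{second} threshold, on the dissipation: bad times are those with $\int_\Omega|\nabla w|^2>\Lambda$ \emph{or} $E>\ell/M$. At good times the bound $\int|\nabla w|^2\le\Lambda$ yields, via the Gibbs--Thomson law~\eqref{WeakFormGibbsThomson} and Sch\"atzle's trace estimate, an $L^p$ bound on the generalized mean curvature of $\mu_t$; this is precisely the input Allard regularity needs to \emph{prove} the $C^{1,\alpha}$ graph representation with uniformly small $C^1$ norm (Proposition~\ref{prop:perturbative_graph_regime}). Note also that the De~Giorgi solution carries \emph{two} potentials, the kinetic $u$ from~\eqref{WeakFormMullinsSekerka} and the curvature $w$ from~\eqref{WeakFormGibbsThomson}, and both appear in~\eqref{WeakFormDissipation}; both squares get completed against the same auxiliary $\widetilde w$.

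On your $H^{3/2}$ worry: the resolution is structural rather than a cancellation. After completing squares the residual is, to leading order after a Hanzawa change of variables, the pairing $\langle h'',\,H_{\partial\mathscr{A}}^2 h\rangle_{H^{1/2}_{MS}(\partial\mathscr{A})}$. The first factor is absorbed into the already-available dissipation $\int_\Omega|\nabla(w-\widetilde w)|^2\,dx$; the second requires only the interpolation $\|f\|_{H^{1/2}_{MS}(\partial\mathscr{A})}\lesssim\ell^{-1}\|f\|_{L^2(\partial\mathscr{A})}+\|f'\|_{L^2(\partial\mathscr{A})}$ (Lemma~\ref{lem:interpolation_estimate}), and these norms are exactly what $E_{vol}\sim\|h/\ell\|_{L^2}^2$ and $E_{rel}\sim\|h'\|_{L^2}^2$ control. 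No $H^{3/2}$ control of $h$ is needed.
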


The class of weak solutions in which our weak-strong uniqueness statement holds 
is described in detail in the following definition.
Those solutions were constructed recently by Stinson and the second author~\cite[Theorem~1 and Lemma~3]{Hensel2022}.
These De~Giorgi type varifold solutions are a suitable version of 
curves of maximal slope for general gradient flows, cf.~\cite{Ambrosio2001},  
in the context of the Mullins--Sekerka equation.
That means, they characterize the flow by an optimal energy dissipation 
inequality, see~\eqref{WeakFormDissipation} below.
It is worth mentioning that any weak solution in the sense of 
Luckhaus and Sturzenhecker~\cite{Luckhaus1995} that satisfies the sharp 
energy dissipation inequality is such a De~Giorgi type varifold 
solution (with density $\rho=1$ and equal potentials $u=w$), which means 
that our main result shows in particular the weak-strong uniqueness in this smaller class as well.

\begin{definition}[De~Giorgi type varifold solutions of the Mullins--Sekerka equation]
\label{DefinitionWeakSolution}
Let $d \in \{2,3\}$, let $T \in (0,\infty)$ be a finite time horizon,
let $\Omega \subset \Rd$ be a bounded domain with 
$C^2$-boundary~$\p\Omega$, let $A(0) \subset \Omega$ be an open subset of~$\Omega$
such that $A(0)$ has finite perimeter in~$\Rd$,
and define the associated
canonical oriented varifold $\mu_0 := \mathcal{H}^{d-1} \llcorner \big(\p^*A(0) \cap \Omega) 
\otimes (\delta_{\n_{\p^*A(0)}})_{x \in (\p^*A(0) \cap \Omega)}
\in \mathrm{M}(\overline{\Omega}{\times}\mathbb{S}^{d-1})$.

Consider a family $(A(t))_{t \in (0,T)}$ of open subsets of $\Omega$,
define~$\chi_A$ as the characteristic function associated with
the space-time set $A := \bigcup_{t \in (0,T)} A(t) {\times} \{t\}$,
and consider a family $\mu=(\mu_t)_{t \in (0,T)}$ of oriented 
varifolds $\mu_t \in \mathrm{M}(\overline{\Omega}{\times}\mathbb{S}^{d-1})$,
$t \in (0,T)$,
such that $\mu_t$ is $(d{-}1)$-integer rectifiable for a.e.\ $t \in (0,T)$.\footnote{An 
oriented varifold~$\mu$ is called integer rectifiable if its canonically associated general 
varifold~$\widehat{\mu}$, cf.\ for an example~\eqref{eq:genVarifold}, is integer rectifiable.
\label{foot:intRectOrientedVar}}
We call~$(A,\mu)$ a \emph{De~Giorgi type varifold solution of the Mullins--Sekerka
problem}~\eqref{eq:MS1}--\eqref{eq:MS4} \emph{with time horizon~$T$ and initial data~$A(0)$} if
$(\chi_A,\mu)$ is an admissible pair in the precise sense 
of~\cite[Definition~3]{Hensel2022} (with respect to $\alpha=\frac{\pi}{2})$ and
\begin{subequations}
\begin{itemize}[leftmargin=0.7cm]
\item[i)] (Finite surface area and preserved mass) It holds
\begin{align}
\label{eq:regEvolvingPhase}
\chi_A \in L^\infty(0,T;BV(\Rd;\{0,1\}))&, \,
\int_{A(t)} 1 \,dx = \int_{A(0)} 1 \,dx \quad\text{for a.e.\ } t \in (0,T);
\end{align}
\item[ii)] (Kinetic potential) There exists  
$u \in L^2(0,T;H^1(\Omega))$ with $\dashint_{\Omega} u(\cdot,t) \,dx = 0$
for a.e.\ $t \in (0,T)$ such that
\begin{equation}
\label{WeakFormMullinsSekerka}
\begin{aligned}
&\int_{A(T')} \zeta(\cdot,T') \,dx
- \int_{A(0)} \zeta(\cdot,0) \,dx
\\& 
= \int_{0}^{T'} \int_{A(t)} \partial_t \zeta \,dx dt
- \int_{0}^{T'} \int_{\Omega} \nabla u \cdot \nabla \zeta \,dx dt
\end{aligned}
\end{equation} 
for a.e.\ $T' \in (0,T)$ and all $\zeta \in C^\infty_{cpt}(\overline{\Omega}{\times}[0,T))$;
\item[iii)] (Curvature potential for mean curvature) there exist $w \in L^2(0,T;H^1(\Omega))$
and $C_w = C_w(d,\Omega,T,A(0)) > 0$ such that it holds
\begin{align}
\label{eq:L2ControlChemPotential}
\|w(\cdot,t)\|_{H^1(\Omega)} &\leq C_w (1 + \|(\nabla w)(\cdot,t)\|_{L^2(\Omega)}),
\\
\label{eq:harmonicity2}
\Delta w(\cdot,t) &= 0 \text{ in } \Omega \setminus\overline{\p^*A(t)},
\quad (\n_{\p\Omega}\cdot\nabla) w(\cdot,t) = 0 \text{ on } \p\Omega,
\end{align}
for a.e.\ $t \in (0,T)$, 
and such that~$w$ satisfies the isotropic Gibbs--Thomson law~\eqref{eq:MS3} in the sense that
\begin{align}
\label{WeakFormGibbsThomson}
\int_{\overline{\Omega} {\times} \mathbb{S}^{d-1}}
					 (\mathrm{Id} {-} \pvec\otimes\pvec) : \nabla B(x) \,d\mu_t(x,\pvec)
= \int_{A(t)} \nabla \cdot \big(w(\cdot,t) B\big) \,dx
\end{align}
for a.e.\ $t \in (0,T)$ and all $B \in C^1(\overline{\Omega};\mathbb{R}^d)$
such that $B \cdot \n_{\p\Omega} = 0$ along $\p\Omega$;
\item[iv)] (Energy dissipation inequality) and finally the energy dissipation property in the form of
\begin{align}
\label{WeakFormDissipation}
E[\mu_{T'}]
+ \int_{0}^{T'} \int_{\Omega} \frac{1}{2}|\nabla u|^2 \,dx dt
+ \int_{0}^{T'} \int_{\Omega} \frac{1}{2}|\nabla w|^2 \,dx dt
\leq E[\mu_0]
\end{align}
is satisfied for a.e.\ $T' \in (0,T)$,
where the energy functional is defined by
\begin{align}
E[\mu_t] := |\mu_t|_{\mathbb{S}^{d-1}}(\overline{\Omega})
\end{align}
for all $t \in [0,T)$, where $|\mu_t|_{\mathbb{S}^{d-1}}
\in \mathrm{M}(\overline{\Omega})$ denotes the
mass measure of the oriented varifold $\mu_t$.
\end{itemize}
\end{subequations}
\end{definition}

\begin{remark}
That $\mu_t$ is indeed integer rectifiable for a.e.\ $t\in(0,T)$ in the precise sense of
Footnote~\ref{foot:intRectOrientedVar} --- in contrast to the slightly weaker
statement of \cite[Definition~3, item~i)]{Hensel2022}, i.e.,
that the mass measure of~$\mu_t$ is $(d{-}1)$-integer rectifiable for a.e.\ $t\in(0,T)$ --- is
in fact proven in \cite[Proof of Theorem~1, Step~6]{Hensel2022}.
\end{remark}

%%%%%%%%%%%%%%%%%%%%%%%%%%%%%%%%%%%%%%%%%%%%%%%%%%%%%%%%%%%%%%%%%%%%%%%%%%%%%%%%%%%%%%%%%%%%%%%%
\section{Overview of the strategy}

For the rest of the paper, we consider 
$d \in \{2,3\}$ 
and fix both a De~Giorgi type varifold solution $(A,\mu)$  
with initial data $A(0)$ and time horizon $T \in (0,\infty)$ and a smoothly evolving solution
$\A = \bigcup_{t \in [0,T_*)} \A(t) \times \{t\}$ with $T_* \in (0,T)$
for the Mullins-Sekerka problem~\eqref{eq:MS1}--\eqref{eq:MS4}.

We require
the following structural assumptions on~$\A$. For every $t \in (0,T_*)$,
the phase $\A(t)$ consists of finitely many simply connected components
(the number of which is constant in time), all of which are open sets with 
smooth boundary and being compactly supported within $\Omega$. 
We also assume that there exists $\ell \in C^1_{loc}([0,T_*);(0,1))$
such that for every $t \in (0,T_*)$ the set
$B_{\ell(t)}(\p\A(t)) = \{x \in \Rd\colon \dist(x,\p\A(t)) < \ell(t)\}$ 
is a regular tubular neighborhood for the interface $\p\A(t) = \p\A(t) \cap \Omega$ satisfying
\begin{align}
\label{eq:no_contact_points}
B_{\ell(t)}(\p\A(t)) \subset \{x \in \Omega\colon \dist(x,\p\Omega) > \ell(t)\}.
\end{align}
As usual, the nearest point projection onto $\p\A(t)$, denoted by~$P_{\p\A}(\cdot,t)$,
together with the associated signed distance function, denoted by~$s_{\p\A}(\cdot,t)$, yield
a smooth change of variables 
\begin{align}
B_{\ell(t)}(\p\A(t)) \ni x \mapsto
(P_{\p\A}(\cdot,t),s_{\p\A}(\cdot,t)) \in \p\A(t) {\times} (-\ell(t),\ell(t)),
\end{align}
where the signed distance function is oriented according to 
\begin{align}
\nabla s_{\p\A} = \n_{\p\A}.
\end{align} 
Analogously, we assume that $\ell(t)$ is an admissible tubular neighborhood width
for $\p\Omega$ for all $t \in (0,T_*)$.

In order to introduce a suitable error functional between the two solutions~$A$
and~$\A$, we will later choose extensions of the strong normal
\begin{align}
\label{eq:min_assumption1}
\xi \in W^{1,\infty}_{loc}((0,T_*);W^{1,\infty}(\overline{\Omega};\Rd))
\cap L^{\infty}_{loc}((0,T_*);W^{2,\infty}(\overline{\Omega};\Rd))
\end{align}
and the signed distance function, a weight 
(which one should think of as a truncated version of the signed distance function)
\begin{align}
\label{eq:min_assumption2}
\vartheta \in W^{1,\infty}_{loc}((0,T_*);W^{1,\infty}(\overline{\Omega}))
\cap L^{\infty}_{loc}((0,T_*);W^{2,\infty}(\overline{\Omega}))
\end{align}
satisfying as a bare minimum
\begin{align}
\label{eq:min_assumption3}
\xi &= \nabla s_{\p\A} \text{ in } \bigcup_{t\in[0,T_*)} B_{\frac{\ell(t)}{2}}(\p\A(t)) {\times} \{t\},
&&|\xi| \leq 1 \text{ on } \overline{\Omega} {\times} [0,T_*),
\\
\label{eq:min_assumption4}
\vartheta &= -\frac{s_{\p\A}}{\ell^2} \text{ in } \bigcup_{t\in[0,T_*)} B_{\frac{\ell(t)}{2}}(\p\A(t)) {\times} \{t\},
\\
\label{eq:min_assumption5}
\vartheta &< 0 \text{ in } \bigcup_{t\in[0,T_*)}\A(t) {\times} \{t\},
&&\vartheta > 0 \text{ in } \bigcup_{t\in[0,T_*)}\big(\Omega\setminus\overline{\A(t)}\big) {\times} \{t\},
\\
\label{eq:min_assumption6}
\xi &= 0 \text{ in } \bigcup_{t\in[0,T_*)}
\big(\Omega {\setminus} B_{\frac{3}{4}\ell(t)}(\p\A(t))\big) {\times} \{t\}.
\end{align}
Based on such a pair $(\xi,\vartheta)$, we define for all $t \in [0,T_*)$
\begin{align}
\label{eq:rel_entropy}
E_{rel}[A,\mu|\A](t) &:= E[\mu_t]
- \int_{\p^* A(t) \cap \Omega} \n_{\p^* A}(\cdot,t)\cdot\xi(\cdot,t) \,\dH[d-1],
\\
\label{eq:bulk_error}
E_{vol}[A|\A](t) &:= \int_{A(t) \Delta \A(t)} |\vartheta(\cdot,t)| \,dx,
\\
\label{eq:overall_error}
E[A,\mu|\A](t) &:= E_{rel}[A,\mu|\A](t) + E_{vol}[A|\A](t).
\end{align}
Defining the Radon--Nikod\'ym derivative
\begin{align}
\label{def:multiplicity}
\varrho_t := \frac{\mathcal{H}^{d-1} \llcorner (\partial^*A(t) \cap \Omega)}
{|\mu_t|_{\mathbb{S}^{d-1}} \llcorner \Omega}
 \in [0,1],
\end{align}
it follows from~\cite[Definition~3, item~ii)]{Hensel2022} 
that $\varrho_t =0$ outside $\partial^*A(t) \cap \Omega$ and
\begin{align}
\label{eq:valuesMultiplicity}
\varrho_t \in (2\mathbb{N} - 1)^{-1} \quad \text{on $\partial^*A(t) \cap \Omega$},
\end{align}
so that we may rewrite the relative entropy~\eqref{eq:rel_entropy} in the form of
\begin{equation}
\label{eq:repRelEntropy2}
\begin{aligned}
E_{rel}[A,\mu|\A](t) &= |\mu_t|_{\mathbb{S}^{d-1}}(\partial\Omega)
+ \int_{\Omega \cap \{\varrho_t \leq \frac{1}{3}\}} (1- \rho_t) \,d|\mu_t|_{\mathbb{S}^{d-1}}
\\&~~~
+ \int_{\partial^*A(t) \cap \Omega} (1 - \n_{\p^*A}\cdot\xi)(\cdot,t) \,\dH[d-1].
\end{aligned}
\end{equation}
Furthermore, due to~\eqref{eq:min_assumption6}
it follows again from~\cite[Definition~3, item~ii)]{Hensel2022} that
\begin{align}
\label{eq:repRelEntropy3}
E_{rel}[A,\mu|\A](t) &= \int_{\overline{\Omega}{\times}\mathbb{S}^{d-1}}
1 - \pvec \cdot \xi(x,t) \,d\mu_t(x,\pvec).
\end{align}

In order to suitably control the time evolution of the error
functional~$E[A,\mu|\A]$, a third construction will play a prominent
role given by a vector field
\begin{align}
\label{eq:min_assumption7}
B \in L^{\infty}_{loc}((0,T_*);W^{2,\infty}(\overline{\Omega};\Rd))
\end{align}
satisfying at least
\begin{align}
\label{eq:min_assumption8}
B &= 0 \text{ in } \bigcup_{t\in[0,T_*)}
\big(\Omega \setminus B_{\frac{3}{4}\ell(t)}(\p\A(t))\big) {\times} \{t\}.
\end{align}
One should think of~$B$ as an extension of the normal velocity
of the smoothly evolving solution~$\A$:
\begin{align}
\label{eq:min_assumption9}
B(\cdot,t) &= \V_{\p\A}(P_{\p\A}(\cdot,t),t)
\text{ on } B_{\frac{\ell(t)}{2}}(\p\A(t)),\,t \in [0,T_*).
\end{align}

With these definitions in place, we will now give an overview on our
strategy for the proof of Theorem~\ref{theo:mainResult}.

%%%%%%%%%%%%%%%%%%%%%%%%%%%%%%%%%%%%%%%%%%%%%%%%%%%%%%%%%%%%%%%%%%%%%%%%%%%%%%%%%%%%%%%%%%%%%%%%
\subsection{Preliminary relative entropy inequality}
The first ingredient is a preliminary estimate for the time
evolution of the error functional~$E[A,\mu|\A]$ defined in~\eqref{eq:overall_error}, just working
under the minimal assumptions~\eqref{eq:min_assumption1}--\eqref{eq:min_assumption6} 
and~\eqref{eq:min_assumption7}--\eqref{eq:min_assumption8}
for the associated constructions $(\xi,\vartheta,B)$.
The goal here is to identify the structure of the terms we will need to estimate by the 
error functional.

\begin{lemma}[Preliminary relative entropy inequality]
\label{lem:relative_entropy_inequality} For a.e.\ $T' \in (0,T)$, it holds
\begin{equation}
\begin{aligned}
\label{eq:preliminary_estimate_rel_entropy}
&E_{rel}[A,\mu|\A](T') 
\\&
\leq E_{rel}[A,\mu|\A](0)
+ \int_{0}^{T'} R_{dissip}(t) {+} R_{\partial_t\xi}(t) {+} R_{\nabla B}(t)
{+} R_{varifold/BV}(t) \,dt
\end{aligned}
\end{equation}
and
\begin{equation}
\begin{aligned}
\label{eq:preliminary_estimate_bulk}
&E_{vol}[A|\A](T') 
%\\&~
= E_{vol}[A|\A](0)
+ \int_{0}^{T'} U_{dissip}(t) + U_{\partial_t\vartheta}(t) + U_{\nabla B}(t)  \,dt,
\end{aligned}
\end{equation}
where we defined
\begin{align}
\label{eq:R_dissip}
R_{dissip}(t) 
&= - \int_{\Omega} \frac{1}{2}|\nabla u(\cdot,t)|^2 \,dx
   - \int_{\Omega} \frac{1}{2}|\nabla w(\cdot,t)|^2 \,dx
\\&~~~ \nonumber
- \int_{\Omega} \nabla u(\cdot,t) \cdot \nabla (\nabla\cdot\xi)(\cdot,t) \,dx
\\&~~~ \nonumber
+ \int_{\p^*A(t) \cap \Omega} 
(B \cdot \n_{\p^* A})(\cdot,t) \big(w + (\nabla \cdot \xi)\big)(\cdot,t) \,\dH[d-1],
\\ \label{eq:R_dtxi}
R_{\partial_t\xi}(t) 
&= - \int_{\p^*A(t) \cap \Omega} 
\big(\p_t\xi {+} (B\cdot\nabla)\xi {+} (\nabla B)^\mathsf{T}\xi\big)(\cdot,t) \cdot (\n_{\p^* A} {-} \xi)(\cdot,t) \,\dH[d-1]
\\&~~~ \nonumber
- \int_{\p^*A(t) \cap \Omega} 
\big(\p_t\xi {+} (B\cdot\nabla)\xi\big)(\cdot,t) \cdot \xi(\cdot,t) \,\dH[d-1],
\\ \label{eq:R_nablaB}
R_{\nabla B}(t) &= 
- \int_{\p^*A(t) \cap \Omega} 
\big((\n_{\p^* A} {-} \xi) \otimes (\n_{\p^* A} {-} \xi)\big(\cdot,t) : \nabla B(\cdot,t) \,\dH[d-1]
\\&~~~ \nonumber
- \int_{\p^*A(t) \cap \Omega} 
(\n_{\p^* A} \cdot \xi - 1)(\cdot,t) (\nabla \cdot B)(\cdot,t) \,\dH[d-1],
\\ \label{eq:R_varifoldBV}
R_{varifold/BV}(t) &=
-\int_{\p^*A(t) \cap \Omega} 
(\mathrm{Id} {-} \n_{\p^* A} \otimes \n_{\p^* A} )(\cdot,t) : \nabla B(\cdot,t) \,\dH[d-1]
\\&~~~ \nonumber
+ \int_{\overline{\Omega} {\times} \mathbb{S}^{d-1}}
					 (\mathrm{Id} {-} \pvec\otimes \pvec) : \nabla B(\cdot,t) \,d\mu_t(\cdot,\pvec)
\end{align}
as well as
\begin{align}
\label{eq:A_dissip}
U_{dissip}(t) 
&= - \int_{\Omega} \nabla u(\cdot,t) \cdot \nabla \vartheta(\cdot,t) \,dx
\\&~~~ \nonumber
+ \int_{\p^*A(t)\cap\Omega} \vartheta(\cdot,t) (\n_{\p^* A}\cdot B)(\cdot,t) \,\dH[d-1],
\\ \label{eq:A_dtvartheta}
U_{\partial_t\vartheta}(t) 
&= \int_{\Omega} (\chi_A {-} \chi_{\A})(\cdot,t)
(\partial_t\vartheta {+} (B\cdot\nabla)\vartheta)(\cdot,t) \,dx,
\\ \label{eq:A_nablaB}
U_{\nabla B}(t) &= 
\int_{\Omega} (\chi_A {-} \chi_{\A})(\cdot,t) \vartheta(\cdot,t) (\nabla\cdot B)(\cdot,t) \,dx.
\end{align}
\end{lemma}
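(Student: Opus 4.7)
Both estimates are obtained by integrating in time the weak equation \eqref{WeakFormMullinsSekerka} tested against suitable time-dependent functions, combined with the Reynolds identity \eqref{eq:time_derivative_A} applied to the smooth phase $\A$ (using $B|_{\p\A}=\V_{\p\A}$ from \eqref{eq:min_assumption9}), the Gibbs--Thomson relation \eqref{WeakFormGibbsThomson}, and---only for the relative entropy---the energy dissipation \eqref{WeakFormDissipation}. The dissipation is the only inequality in the argument; every other step is an equality.

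\textbf{Bulk identity.} The sign condition \eqref{eq:min_assumption5} gives $E_{vol}(t)=\int_\Omega(\chi_A-\chi_\A)\vartheta\,dx$. Testing \eqref{WeakFormMullinsSekerka} with $\zeta=\vartheta$ yields $\tfrac{d}{dt}\!\int_A\vartheta\,dx=\int_A\p_t\vartheta\,dx-\int_\Omega\nabla u\cdot\nabla\vartheta\,dx$, while \eqref{eq:time_derivative_A} together with \eqref{eq:min_assumption9} and Gauss--Green on $\A$ gives $\tfrac{d}{dt}\!\int_\A\vartheta\,dx=\int_\A(\p_t\vartheta+B\cdot\nabla\vartheta+\vartheta\,\nabla\cdot B)\,dx$. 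Subtracting, then adding and subtracting the identity $\int_A\nabla\cdot(\vartheta B)\,dx=-\int_{\p^*A}\vartheta\,(B\cdot\n_{\p^*A})\,\dH[1]$ to re-introduce the difference $\chi_A-\chi_\A$, produces exactly $U_{dissip}+U_{\p_t\vartheta}+U_{\nabla B}$, matching \eqref{eq:preliminary_estimate_bulk} (with $U_{\cdot}$ in place of $A_{\cdot}$).

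\textbf{Relative entropy.} Since $\xi$ has compact support in $\Omega$ by \eqref{eq:min_assumption6}, BV Gauss--Green yields $E_{rel}(t)=E[\mu_t]+\int_A\nabla\cdot\xi\,dx$. I estimate the first summand by \eqref{WeakFormDissipation} and compute the second by testing \eqref{WeakFormMullinsSekerka} with $\zeta=\nabla\cdot\xi$; moving $\p_t$ inside and using Gauss--Green again gives $\int_A\nabla\cdot(\p_t\xi)\,dx=-\int_{\p^*A}\p_t\xi\cdot\n_{\p^*A}\,\dH[1]$. This yields the preliminary bound
\begin{equation*}
E_{rel}(T')-E_{rel}(0)\leq\int_0^{T'}\!\!\Bigl[-\tfrac12\!\int_\Omega|\nabla u|^2\,dx-\tfrac12\!\int_\Omega|\nabla w|^2\,dx-\!\!\int_\Omega\!\nabla u\cdot\nabla(\nabla\cdot\xi)\,dx-\!\!\int_{\p^*A}\!\p_t\xi\cdot\n_{\p^*A}\,\dH[1]\Bigr]\!dt.
\end{equation*}
To cast the integrand into the form $R_{dissip}+R_{\p_t\xi}+R_{\nabla B}+R_{varifold/BV}$, I add and subtract the boundary term $\int_{\p^*A}(B\cdot\n_{\p^*A})(w+\nabla\cdot\xi)\,\dH[1]$; the ``$+$'' copy completes $R_{dissip}$. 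For the subtracted $w$-piece I use Gauss--Green to rewrite $\int_{\p^*A}w(B\cdot\n_{\p^*A})\,\dH[1]=-\int_A\nabla\cdot(wB)\,dx$ and invoke \eqref{WeakFormGibbsThomson} tested against $B$ to convert this into the varifold expression $\int(\mathrm{Id}-\pvec\otimes\pvec):\nabla B\,d\mu_t$, which together with the pointwise identity $(\mathrm{Id}-\n\otimes\n):\nabla B=\nabla\cdot B-\n\cdot(\n\cdot\nabla)B$ produces $R_{varifold/BV}$. Decomposing the remaining $\p_t\xi$, $(B\cdot\nabla)\xi$ and $(\nabla B)^{\mathsf T}\xi$ contributions via $\n_{\p^*A}=(\n_{\p^*A}-\xi)+\xi$ yields $R_{\p_t\xi}$, while the analogous $\nabla B$-rearrangement gives $R_{\nabla B}$.

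\textbf{Main obstacle.} After all cancellations, the only leftover is the scalar $\n_{\p^*A}\cdot[\nabla\cdot(\xi\otimes B-B\otimes\xi)]$ integrated against $\dH[1]$ on $\p^*A$. This must vanish for the argument to close as an inequality. The key observation is that $\xi\otimes B-B\otimes\xi$ is antisymmetric, so the pointwise identity $\nabla\cdot\nabla\cdot(\xi\otimes B-B\otimes\xi)\equiv 0$ holds in $\Omega$; thus BV Gauss--Green applied to the vector field $W:=\nabla\cdot(\xi\otimes B-B\otimes\xi)$ on $A$ yields
\begin{equation*}
\int_{\p^*A}\n_{\p^*A}\cdot W\,\dH[1]=-\int_A\nabla\cdot W\,dx=0.
\end{equation*}
This antisymmetric-tensor cancellation is precisely what forces all \emph{linear-in-defect} terms in $(\n_{\p^*A}-\xi)$ to drop out, leaving $R_{\nabla B}$ genuinely quadratic in $(\n_{\p^*A}-\xi)$, as required for the subsequent Gronwall closure culminating in \eqref{eq:stability_estimate}.
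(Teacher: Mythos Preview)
Your argument is correct and follows the same route as the paper. For the bulk identity you reproduce exactly the manipulation leading to \eqref{eq:auxBulkError1} and \eqref{eq:auxBulkError5}; for the relative entropy you arrive at the paper's intermediate estimate \eqref{eq:auxRelEntropy1} and then carry out by hand the algebraic rearrangement that the paper simply cites as ``by now standard computations \cite[identity~(2.11)]{Hensel2021a}'' in display \eqref{eq:auxRelEntropy4}. Your observation that the sole nontrivial leftover is $\int_{\partial^*A}\n_{\partial^*A}\cdot\nabla\cdot(\xi\otimes B-B\otimes\xi)\,d\mathcal{H}^1$, and that it vanishes because the double divergence of an antisymmetric $2$-tensor is identically zero so Gauss--Green on the BV set~$A$ kills it, is exactly the mechanism behind that cited identity --- you have made it explicit rather than outsourced it, which is a genuine plus.
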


%%%%%%%%%%%%%%%%%%%%%%%%%%%%%%%%%%%%%%%%%%%%%%%%%%%%%%%%%%%%%%%%%%%%%%%%%%%%%%%%%%%%%%%%%%%%%%%%
\subsection{Time splitting argument: Definition of good and bad times}
In order to suitably estimate the right hand sides of~\eqref{eq:preliminary_estimate_rel_entropy}
and~\eqref{eq:preliminary_estimate_bulk}, respectively, we essentially aim to reduce
the task to a perturbative setting, where the interface of the weak solution can
be represented as a graph over the interface of the smooth solution (with arbitrarily small $C^1$ norm).
This reduction argument is implemented based on a case distinction for times~$t \in (0,T)$,
with the non-perturbative regime corresponding to either disproportionally large dissipation
of energy of the weak solution or a lower bound for the error~$E[A,\mu|\A]$. We formalize this as follows.

Let $\Lambda \in (0,\infty)$ and $M \in (1,\infty)$ constants we will determine later. We
then construct a disjoint decomposition
\begin{align}
(0,T_*) = \Tbad(\Lambda,M) \cup \Tgood(\Lambda,M)
\end{align}
by means of 
\begin{align}
\label{eq:def_bad_times}
\Tbad(\Lambda,M) &:= \Tbad^{(1)}(\Lambda) \cup \Tbad^{(2)}(\Lambda,M),
\\
\label{eq:def_bad_times_dissipation}
\Tbad^{(1)}(\Lambda) &:= \Big\{t \in (0,T_*) \colon \int_{\Omega} \frac{1}{2}|\nabla w(\cdot,t)|^2 \,dx > \Lambda\Big\},
\\
\label{eq:def_bad_times_rel_entropy}
\Tbad^{(2)}(\Lambda,M) &:= \Big\{t \in (0,T_*) \setminus \Tbad^{(1)}(\Lambda) \colon 
E[A,\mu|\A](t) > \frac{\ell(t)^{d{-}1}}{M}\Big\},
\\
\label{eq:def_good_times}
\Tgood(\Lambda,M) &:= (0,T_*) \setminus \Tbad(\Lambda,M).
\end{align}

The merit of these definitions is that on one side, the estimate
for the right hand sides of~\eqref{eq:preliminary_estimate_rel_entropy}
and~\eqref{eq:preliminary_estimate_bulk} is easily closed for times $t\in \Tbad(\Lambda,M)$,
whereas on the other side for times $t\in \Tgood(\Lambda,M)$ one can actually \textit{prove}
that one has to be in a perturbative setting; cf.\ Proposition~\ref{prop:perturbative_graph_regime} below.
The latter is a powerful tool to estimate the (not yet fully unraveled) ``non-local contributions'' in the
right hand sides of~\eqref{eq:preliminary_estimate_rel_entropy}
and~\eqref{eq:preliminary_estimate_bulk} originating from the 
gradient flow structure 
$(\V_{\p A},\MC_{\p A}) \in H^{-1/2}_{MS}(\p A \cap \Omega) 
{\times} H^{1/2}_{MS}(\p A \cap \Omega)$ 
of Mullins--Sekerka flow~\eqref{eq:MS1}--\eqref{eq:MS4}. 

%%%%%%%%%%%%%%%%%%%%%%%%%%%%%%%%%%%%%%%%%%%%%%%%%%%%%%%%%%%%%%%%%%%%%%%%%%%%%%%%%%%%%%%%%%%%%%%%
\subsection{Stability estimates at bad times}
In a first step, we will now estimate the right hand sides of~\eqref{eq:preliminary_estimate_rel_entropy}
and~\eqref{eq:preliminary_estimate_bulk} in the non-perturbative setting.

\begin{lemma}[Stability estimates at bad times]
\label{lem:stability_estimate_bad_times}
For all $T' \in (0,T_*)$ there exists $\Lambda = \Lambda(\mathscr{A},A(0),T') \in (0,\infty)$
such that for all $M \in (1,\infty)$
and all $t \in \Tbad(\Lambda,M) \cap (0,T')$, it holds
\begin{align}
\nonumber
&R_{dissip}(t) + R_{\partial_t\xi}(t) + R_{\nabla B}(t)
+ R_{varifold/BV}(t)
\\&~ \label{eq:rel_entropy_bad_times_estimate}
\leq
\begin{cases}
- \int_{\Omega} \frac{3}{8}|\nabla u(\cdot,t)|^2 
+  \frac{3}{8}|\nabla w(\cdot,t)|^2 \,dx,
& t \in \Tbad^{(1)}(\Lambda),
\\[1ex]
- \int_{\Omega} \frac{3}{8}|\nabla u(\cdot,t)|^2 
+ \frac{1}{2}|\nabla w(\cdot,t)|^2 \,dx
+ \frac{\Lambda}{4} \frac{M}{\ell(t)^{d{-}1}} E[A,\mu|\A](t), 
& \text{else},
\end{cases} 
\end{align}
and
\begin{align}
\nonumber
&U_{dissip}(t) + U_{\partial_t\vartheta}(t) + U_{\nabla B}(t)
\\&~ \label{eq:bulk_bad_times_estimate}
\leq
\begin{cases}
 \int_{\Omega} \frac{1}{8}|\nabla u(\cdot,t)|^2 
+ \frac{1}{8}|\nabla w(\cdot,t)|^2 \,dx,
& t \in \Tbad^{(1)}(\Lambda),
\\[1ex]
\int_{\Omega} \frac{1}{8}|\nabla u(\cdot,t)|^2  \,dx
+ \frac{\Lambda}{4} \frac{M}{\ell(t)^{d{-}1}} E[A,\mu|\A](t),
& \text{else}.
\end{cases} 
\end{align}
\end{lemma}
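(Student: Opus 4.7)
The plan is to bound each of the summands on the right-hand sides of~\eqref{eq:preliminary_estimate_rel_entropy} and~\eqref{eq:preliminary_estimate_bulk} separately, making use of two structural principles. First, the constructions $\xi,\vartheta,B$ enjoy uniform $W^{2,\infty}$ bounds on $[0,T']$ depending only on $(\A,T')$. Second, the relative entropy is coercive in the sense that $\int_{\p^*A(t)\cap\Omega}|\n_{\p^*A}-\xi|^2\,\dH[1]\leq 2E_{rel}[A,\mu|\A](t)$ (since $|\xi|\leq 1$), and the varifold-BV defect is itself controlled by $E_{rel}$ via the representation~\eqref{eq:repRelEntropy2}.

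I would first dispatch the \emph{purely geometric} terms. The coercivity above combined with the $W^{2,\infty}$ bounds yields
\begin{equation*}
|R_{\partial_t\xi}(t)| + |R_{\nabla B}(t)| + |R_{varifold/BV}(t)| \leq C(\A,T')\bigl(1 + E_{rel}[A,\mu|\A](t)\bigr),
\end{equation*}
where the constant term absorbs contributions proportional to the perimeter, which is in turn bounded by $E[\mu_0]$ thanks to~\eqref{WeakFormDissipation}. For the bulk, a direct computation in $B_{\ell(t)/2}(\p\A(t))$ using $\vartheta=-s_{\p\A}/\ell^2$ and $B(\cdot,t)=\V_{\p\A}(P_{\p\A}(\cdot,t),t)$ gives $|\p_t\vartheta+(B\cdot\nabla)\vartheta|\leq C|\vartheta|$ there, while outside this neighborhood $|\vartheta|$ is bounded below by a positive constant; together these produce $|U_{\p_t\vartheta}(t)|+|U_{\nabla B}(t)|\leq C(\A,T')E_{vol}[A|\A](t)$. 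Note that $E_{rel}\leq 2E[\mu_0]$ and $E_{vol}\leq\|\vartheta\|_{L^\infty}|\Omega|$ are both bounded a priori.

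Next, I would handle the \emph{interaction} terms coupling the potentials $u,w$ to the geometric constructions. Young's inequality dominates the cross terms $-\int_\Omega\nabla u\cdot\nabla(\nabla\cdot\xi)\,\dx$ and $-\int_\Omega\nabla u\cdot\nabla\vartheta\,\dx$ by $\tfrac{1}{8}\int_\Omega|\nabla u|^2\,\dx+C_0$. For the boundary integrals involving $w$, I would exploit~\eqref{eq:min_assumption8} together with~\eqref{eq:IBP} (the $\p\Omega$-contribution vanishing because $B$ vanishes there) to rewrite them as bulk integrals over $A(t)$, then apply the Poincar\'e-type inequality~\eqref{eq:L2ControlChemPotential} to obtain a bound of the form $C_0\bigl(1+\|\nabla w(\cdot,t)\|_{L^2(\Omega)}\bigr)$. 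Summing these contributions yields, before the case split,
\begin{equation*}
R_{dissip}+R_{\partial_t\xi}+R_{\nabla B}+R_{varifold/BV} \leq -\tfrac{3}{8}\int_\Omega|\nabla u|^2\,\dx - \tfrac{1}{2}\int_\Omega|\nabla w|^2\,\dx + C_0\bigl(1 + \|\nabla w\|_{L^2(\Omega)} + E_{rel}\bigr),
\end{equation*}
and the analogous bound for the bulk.

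Finally, the case analysis fixes $\Lambda$. For $t\in\Tbad^{(1)}(\Lambda)$, the surplus $\int_\Omega|\nabla w|^2\,\dx>2\Lambda$ is exploited: Young applied to $C_0\|\nabla w\|_{L^2}$ yields $\tfrac{1}{16}\int|\nabla w|^2+4C_0^2$, and the leftover $\tfrac{1}{16}\int|\nabla w|^2>\Lambda/8$ beats the a priori bounded constant $C_0(1+E_{rel})+4C_0^2$ as soon as $\Lambda=\Lambda(\A,A(0),T')$ is chosen large. On $\Tbad^{(2)}(\Lambda,M)$, instead, one uses $\|\nabla w\|_{L^2}\leq\sqrt{2\Lambda}$ to bound the interaction by $C_0(1+\sqrt{\Lambda})$; this is absorbed by $\tfrac{\Lambda M}{4\ell(t)}E[A,\mu|\A](t)>\Lambda/4$ for the same $\Lambda$ large, independently of $M$. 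The bulk estimate~\eqref{eq:bulk_bad_times_estimate} follows the same recipe, using the ``free'' $\tfrac{1}{8}\int|\nabla w|^2$ on its right-hand side in the $\Tbad^{(1)}$ case. The only non-routine step I anticipate is the rigorous interpretation of the boundary integral $\int_{\p^*A\cap\Omega}(B\cdot\n_{\p^*A})w\,\dH[1]$ when $\p^*A$ is merely rectifiable and $w$ only belongs to $H^1(\Omega)$; its meaning must be tied back to~\eqref{eq:IBP} and the harmonicity~\eqref{eq:harmonicity2}, turning this into the only estimate requiring care beyond bookkeeping.
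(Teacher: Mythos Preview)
Your approach is correct, but it takes a more laborious route than the paper's. The paper's key observation is that the sum $R_{dissip}+R_{\partial_t\xi}+R_{\nabla B}+R_{varifold/BV}$ (and likewise the sum of the $U$-terms) collapses algebraically back to the ``raw'' form obtained in Step~1 of the proof of Lemma~\ref{lem:relative_entropy_inequality}, namely
\[
-\tfrac{1}{2}\int_\Omega|\nabla u|^2\,dx - \tfrac{1}{2}\int_\Omega|\nabla w|^2\,dx - \int_\Omega \nabla u\cdot\nabla(\nabla\cdot\xi)\,dx - \int_{\p^*A\cap\Omega}\p_t\xi\cdot\n_{\p^*A}\,\dH[1],
\]
because every $B$-dependent contribution was introduced in Step~2 merely by adding and subtracting zero (together with one application of the Gibbs--Thomson identity~\eqref{WeakFormGibbsThomson}). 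Equivalently, one may simply take $B\equiv 0$ at bad times. Once this is recognized, there is no boundary integral involving~$w$ to contend with, and the remaining bounds are two lines of Young's inequality with~$\Lambda$ chosen to dominate the resulting geometric constants.

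Your route instead keeps the nonzero $B$ and bounds each piece separately, which forces you to convert $\int_{\p^*A\cap\Omega}(B\cdot\n_{\p^*A})w\,\dH[1]$ into a bulk integral via Gauss--Green and then invoke~\eqref{eq:L2ControlChemPotential} to control~$\|w\|_{L^2}$. This works and produces the same $\Lambda$-mechanism, but the step you correctly flag as ``non-routine'' is precisely the one the paper's telescoping observation avoids altogether. The payoff of the paper's shortcut is that the bad-time estimate becomes purely kinematic (only $\xi$, $\vartheta$ and their time derivatives appear), with no coupling to~$w$ left to absorb.
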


%%%%%%%%%%%%%%%%%%%%%%%%%%%%%%%%%%%%%%%%%%%%%%%%%%%%%%%%%%%%%%%%%%%%%%%%%%%%%%%%%%%%%%%%%%%%%%%%
\subsection{Stability estimates at good times}
As already said, a key ingredient for our approach to weak-strong uniqueness
and stability of the Mullins--Sekerka problem~\eqref{eq:MS1}--\eqref{eq:MS4}
consists of a reduction argument to the interface of the weak solution being
given as a graph over the interface of the smoothly evolving solution
with arbitrarily small $C^1$ norm. It is a non-trivial result that
our definition~\eqref{eq:def_good_times} of good times is actually
a sufficient condition to imply such a perturbative graph setting, an implication which we
formalize in the following result. 

\begin{proposition}[Perturbative graph setting at good times]
\label{prop:perturbative_graph_regime}
Fix $T' \in (0,T_*)$, and
let $\Lambda = \Lambda(\mathscr{A},A(0),T') \in (0,\infty)$ be the constant 
from Lemma~\ref{lem:stability_estimate_bad_times}. For every
$C \in (1,\infty)$ there exists a constant 
\begin{align}
\label{eq:choiceSmallness}
M = M(\Omega,A(0),\mathscr{A},C,\Lambda,T') \in (1,\infty)
\end{align} 
such that for almost every
$t \in \Tgood(\Lambda,M) \cap (0,T')$ there exists a height function 
\begin{align}
\label{eq:regularity_graph}
h(\cdot,t) \in (C^{1,1{-}\frac{d-1}{4}} \cap H^{2})\big(\partial\A(t)\big)
\end{align}
such that
\begin{align}
\label{eq:graph_representation}
\partial^*A(t) &= \big\{x {+} h(x,t) \n_{\partial\A}(x,t)\colon x \in \partial\A(t)\big\}
\subset \{x\in\Omega\colon\dist(x,\p\Omega) {>} \ell(t)\},
\\
\label{eq:representation_varifold}
|\mu_t|_{\mathbb{S}^{d-1}} 
&= \mathcal{H}^{d-1} \llcorner \partial^*A(t),
\end{align}
and
\begin{align}
\label{eq:perturbative_regime1}
\frac{1}{\ell(t)}\|h(\cdot,t)\|_{L^\infty(\partial\A(t))}
+ \|\nabla_{\p\A}h(\cdot,t)\|_{L^\infty(\partial\A(t))} &\leq \frac{1}{16C}.
\end{align}
\end{proposition}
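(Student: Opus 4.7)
The plan is to convert the integral smallness $E[A,\mu|\A](t) \leq \ell(t)/M$ together with the dissipation bound $\|\nabla w(\cdot,t)\|_{L^2(\Omega)}^2 \leq 2\Lambda$ into the pointwise $C^1$-smallness asserted in \eqref{eq:perturbative_regime1}, with the threshold $M$ to be chosen a posteriori, depending on $C$, $\Lambda$, and the geometry of $\A$ on $[0,T']$. The argument has three stages: spatial concentration, a local $\varepsilon$-regularity step, and globalization together with the $W^{2,p}$-upgrade.

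\emph{Concentration.} First I would show that both the phase and the varifold are essentially concentrated inside the tubular neighborhood $B_{\ell(t)/2}(\p\A(t))$. By \eqref{eq:min_assumption4}--\eqref{eq:min_assumption5} the weight $|\vartheta(\cdot,t)|$ is bounded below by a positive constant depending only on $\A$ and $T'$ outside that neighborhood, so $E_{vol} \leq \ell/M$ forces $\mathcal{L}^2(A(t)\Delta\A(t)) \lesssim \ell^2/M$ with the symmetric difference essentially contained in $B_{\ell(t)/2}(\p\A(t))$. Since $\xi$ vanishes outside $B_{3\ell/4}(\p\A)$ by \eqref{eq:min_assumption6}, the representation \eqref{eq:repRelEntropy3} yields $|\mu_t|_{\mathbb{S}^1}(\Omega \setminus B_{3\ell/4}(\p\A)) \leq E_{rel} \leq \ell/M$. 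Moreover, \eqref{eq:repRelEntropy2} and the multiplicity quantization \eqref{eq:valuesMultiplicity} bound both $|\mu_t|_{\mathbb{S}^1}(\p\Omega)$ and the higher-multiplicity mass by $\lesssim \ell/M$, while the tilt excess $\int_{\p^*A}(1 - \n_{\p^*A}\cdot\xi) \,d\mathcal{H}^1 \leq E_{rel}$ controls normals of $\p^*A$ relative to $\n_{\p\A}\circ P_{\p\A}$ in an $L^2$-averaged sense.

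\emph{$\varepsilon$-regularity.} Next, the Gibbs--Thomson law \eqref{WeakFormGibbsThomson} identifies the first-variation vector of the canonical general varifold associated with $\mu_t$ with multiplication by $w\,\n_{\p^*A}$ on $\p^*A(t)$; together with \eqref{eq:L2ControlChemPotential} and a trace estimate, this yields an $L^2$ bound on the generalized mean curvature of $\mu_t$ in terms of $\Lambda$ and $\A$. Since $d = 2$, each connected component of $\p\A(t)$ is a smooth, compactly embedded closed curve, and the normal parametrization $(s,r) \mapsto \gamma(s) + r\,\n_{\p\A}(\gamma(s))$ provides cylindrical charts for a finite cover of $\p\A(t)$. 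In each such cylinder, the smallness of tilt excess, the $L^2$ curvature control, and the unit-multiplicity information from the previous step verify the smallness hypotheses of a two-dimensional Allard-type $\varepsilon$-regularity theorem, producing a local $C^{1,\alpha}$ graph $h_{x_0}$ of $\p^*A(t)$ over $\p\A(t)$ with both $\ell^{-1}\|h_{x_0}\|_\infty$ and $\|\nabla^{tan} h_{x_0}\|_\infty$ of order $M^{-\alpha}$ for some $\alpha > 0$.

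\emph{Globalization and main obstacle.} The local graphs glue to a global height function $h(\cdot,t)$: the small tangential slope rules out folds of $\p^*A$ under $P_{\p\A}$, while the perimeter comparison $\mathcal{H}^1(\p^*A \cap \Omega) - \int_{\p^*A}\n_{\p^*A}\cdot\xi\,d\mathcal{H}^1 \leq E_{rel}$ together with the bulk concentration rules out additional connected components of $\p^*A$. Choosing $M$ large enough yields \eqref{eq:graph_representation} and \eqref{eq:perturbative_regime1}, and the unit-multiplicity information gives \eqref{eq:representation_varifold}. The Gibbs--Thomson law, now phrased as a quasilinear second-order elliptic equation for $h$ on the smooth closed curve $\p\A(t)$ with right-hand side $w|_{\p^*A} \in L^p$ and smooth coefficients perturbed by quantities of order $M^{-\alpha}$, then delivers \eqref{eq:regularity_graph} by standard $L^p$-theory. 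The main obstacle is the $\varepsilon$-regularity step: integral smallness of tilt, of bulk difference, and of higher multiplicity must simultaneously be converted to pointwise $C^1$-smallness of a globally-defined graph. The planar setting $d = 2$ enters essentially here, since it is what reduces the gluing of local graphs, and subsequently the $L^p$-analysis of the PDE for $h$, to a one-dimensional problem on $\p\A(t)$.
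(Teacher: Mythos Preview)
Your outline is correct and follows the same overall arc as the paper: concentration via $E_{rel}$ and $E_{vol}$, an $L^p$ curvature bound from the Gibbs--Thomson law together with the dissipation control, Allard regularity, globalization, and the $W^{2,p}$ upgrade by reading Gibbs--Thomson as a quasilinear equation for $h$. The paper organizes the globalization differently, however, around two tools you do not invoke. First, it uses the decomposition of the reduced boundary of a planar set of finite perimeter into $\mathcal{H}^1$-rectifiable Jordan--Lipschitz curves (Ambrosio--Caselles--Masnou--Morel), which gives a combinatorial handle on the pieces of $\p^*A$. Second, rather than gluing local graphs in cylindrical charts over $\p\A$ and appealing to a perimeter comparison, the paper verifies the Allard hypotheses at a \emph{fixed} scale $\rho_{reg}$ at \emph{every} point of $\supp|\mu_t|_{\mathbb{S}^1}$---including points in $\{|\xi|\leq 1/2\}$ and potential points on $\p\Omega$, the latter via the free-boundary Allard theory of Gr\"uter--Jost---and then runs a trapping contradiction: any unwanted Jordan curve (touching $\p\Omega$, lying in $\{|\xi|\leq 1/2\}$, or an extra component near some $\mathscr{J}_k$) has length $\lesssim \ell/M$ by the relative-entropy bound, hence for $M$ large sits inside a ball of radius below $\gamma_{reg}\rho_{reg}$, contradicting the local graph representation on that ball. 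This trapping mechanism is precisely what upgrades ``small mass'' to ``empty,'' the step your sketch leaves implicit; your perimeter comparison only bounds the mass of extras, and your cylindrical charts do not a priori cover points of $\supp\mu$ away from $\p\A$. The paper also uses $E_{vol}$ not merely for concentration but to guarantee that at least one Jordan curve actually winds around each component $\mathscr{J}_k$, since smallness of $E_{rel}$ alone is compatible with $|\mu_t|_{\mathbb{S}^1}\equiv 0$.
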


We continue on how to close the stability estimate for times
at which the
previous result holds, treating it as a black-box.
The first essential step consists of suitably rewriting
the terms constituting~$R_{dissip}$ and~$U_{dissip}$ 
from~\eqref{eq:R_dissip} and~\eqref{eq:A_dissip}, respectively.

To this end, one may draw inspiration from the
gradient flow structure of Mullins--Sekerka flow~\eqref{eq:MS1}--\eqref{eq:MS4},
which, for the smoothly evolving~$\p\A$, is given as follows.
Denote by $H^{1/2}_{MS}(\p\A)$ the set of all maps $f\colon\p\A\to\mathbb{R}$
arising as traces of maps from $H^1(\A)$ such that $\dashint_{\p\A} f \,\dH[d-1]=0$. 
One then defines a Hilbert space structure on~$H^{1/2}_{MS}(\p\A)$ by means of
the minimal Dirichlet energy extension to~$\Omega$:
\begin{align}
\label{eq:def_Hilbert_space_Mullins_Sekerka} 
\langle f, \widetilde{f} \rangle_{H^{1/2}_{MS}(\p\A)} 
:= \int_{\Omega} \nabla u_f \cdot \nabla u_{\widetilde{f}} \,dx, 
\end{align}
where, for given $f \in H^{1/2}_{MS}(\p\A)$,
the associated potential $u_f \in H^1(\Omega)$ satisfies
\begin{align*}
\Delta u_f &= 0 &&\text{in } \Omega\setminus\p\A,
\\
u_f &= f &&\text{on } \p\A,
\\
(\n_{\p\Omega}\cdot\nabla)u_f &= 0 &&\text{on } \p\Omega. 
\end{align*}
Denoting by $H^{-1/2}_{MS}(\p\A)$ the dual of $H^{1/2}_{MS}(\p\A)$,
it is immediate to recognize that the Riesz isomorphism 
$\mathcal{R}\colon H^{-1/2}_{MS}(\p\A) \to H^{1/2}_{MS}(\p\A)$ is formally realized
through the inverse of the (two-phase) Dirichlet-to-Neumann operator:
\begin{align*}
\langle F, f \rangle_{H^{-1/2}_{MS}(\p\A) \times H^{1/2}_{MS}(\p\A)}
= \langle \mathcal{R}(F), f \rangle_{H^{1/2}_{MS}(\p\A)}
= -\int_{\p\A} (\n_{\p\A}\cdot\jump{\nabla u_{\mathcal{R}(F)}}) f \,\dH[d-1],
\end{align*}
as can be seen similarly to the computation \eqref{eq:time_derivative_A}.
Note that the integration by parts is rigorous for those $F$
such that $\mathcal{R}(F)$ is the trace of a, say, $H^2(\A)$ function.

The idea for the rewriting of, say, $R_{dissip}$ is now as follows.
What one likes to achieve in~\eqref{eq:R_dissip} is to complete
squares in the two quadratic terms originating from the energy dissipation
inequality of a weak solution such that the resulting quadratic terms
penalize the \textit{difference of curvatures}
of the two evolving geometries as measured through the correct
gradient flow norm. This is indeed in direct analogy of 
how we proceeded in the case of (multiphase) mean curvature
flow being the $L^2(\p\A)$ gradient flow of the perimeter;
cf.\ Section~3.1 of \cite{Fischer2020a}. The next result
implements this procedure for Mullins--Sekerka flow.

\begin{lemma}[Structure of dissipative terms in perturbative regime]
\label{lem:post_processed_relative_entropy_inequality}
Fix $t \in (0,T_*)$, and assume that for~$t$ the 
conclusions~\emph{\eqref{eq:regularity_graph}--\eqref{eq:perturbative_regime1}}
of Proposition~\ref{prop:perturbative_graph_regime} hold true.
Furthermore, assume that~$t$ is a Lebesgue point for~$s\mapsto u(\cdot,s)$ 
in the sense that
\begin{align}
\label{PropertyLebesguePoint}
\lim_{\tau\searrow 0} \dashint_{t-\tau}^{t+\tau}
\int_{\Omega} |\nabla u-\nabla u(\cdot,t)|^2 \,dx ds &= 0.
\end{align}

\begin{itemize}[leftmargin=0.7cm]
\item[i)] Defining a chemical potential~$\widetilde w = \widetilde w(\cdot,t) \in H^1(\Omega)$ 
by means of
\begin{align}
\label{eq:auxChemPotential1}
- \Delta \widetilde w &= 0 
&& \text{in } \Omega \setminus \p^* A(t),
\\ \label{eq:auxChemPotential2}
\mathrm{tr}_{\p^*A(t)}\widetilde w &= - (\nabla \cdot \xi)(\cdot,t)
&& \text{on } \p^* A(t) \cap \Omega,
\\ \label{eq:auxChemPotential3}
(\n_{\p\Omega} \cdot \nabla) \widetilde w &= 0
&& \text{on } \p\Omega,
\end{align} 
we obtain the following alternative representation for~$R_{dissip}$ from~\eqref{eq:R_dissip}:
\begin{align}
\label{eq:R_dissip_combined}
R_{dissip}(t) &= - \int_{\Omega} \frac{1}{2}|(\nabla u {-} \nabla\widetilde w)(\cdot,t)|^2 \,dx
								 - \int_{\Omega} \frac{1}{2}|(\nabla w {-} \nabla\widetilde w)(\cdot,t)|^2 \,dx
\\&~~~ \nonumber
+ \int_{\p^*A(t) \cap \Omega} (w - \widetilde{w})(\cdot,t)
\big((B \cdot \n_{\p^* A}) + \jump{(\n_{\p^* A} \cdot \nabla)\widetilde w}\big)(\cdot,t) \,\dH[d-1].
\end{align} 
\item[ii)] Similarly, by defining another chemical 
potential $\widetilde w_\vartheta = \widetilde w_\vartheta(\cdot,t)$
through the problem
\begin{align}
\label{eq:auxChemPotentialWeight1}
- \Delta \widetilde w_\vartheta &= 0 
&& \text{in } \Omega \setminus \p^* A(t),
\\ \label{eq:auxChemPotentialWeight2}
\mathrm{tr}_{\p^*A(t)}\widetilde w_\vartheta &= \vartheta(\cdot,t)
&& \text{on } \p^* A(t) \cap \Omega,
\\ \label{eq:auxChemPotentialWeight3}
(\n_{\p\Omega} \cdot \nabla) \widetilde w_\vartheta &= 0
&& \text{on } \p\Omega,
\end{align} 
we obtain that~$U_{dissip}$ from~\eqref{eq:A_dissip} can be rewritten as follows:
\begin{align}
\label{eq:A_dissip_combined}
U_{dissip}(t) &= 
\int_{\p^*A(t)\cap\Omega} \vartheta(\cdot,t)
\big((B\cdot\n_{\p^* A}) + \jump{(\n_{\p^* A}\cdot\nabla)\widetilde w}\big)(\cdot,t) \,\dH[d-1]
\\&~~~ \nonumber
+ \int_{\p^*A(t)\cap\Omega} (u {-} \widetilde w)(\cdot,t) 
\jump{(\n_{\p^* A}\cdot\nabla)\widetilde w_\vartheta}(\cdot,t) \,\dH[d-1].
\end{align} 
\end{itemize}
\end{lemma}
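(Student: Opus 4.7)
The strategy is to exploit the Dirichlet-to-Neumann structure of Mullins--Sekerka flow by introducing the auxiliary potentials $\widetilde{w}, \widetilde{w}_\vartheta$ prescribed by \eqref{eq:auxChemPotential1}--\eqref{eq:auxChemPotential3} and \eqref{eq:auxChemPotentialWeight1}--\eqref{eq:auxChemPotentialWeight3}. Since by Proposition~\ref{prop:perturbative_graph_regime} the interface $\p^*A(t)$ is a $W^{2,p}$-graph over $\p\A(t)$ and the Dirichlet data $-(\nabla\cdot\xi)|_{\p^*A(t)}$, $\vartheta|_{\p^*A(t)}$ are Lipschitz, standard two-phase elliptic theory provides unique $H^1$ solutions that are $H^2$ in each phase; in particular, the jumps $\jump{\n_{\p^*A}\cdot\nabla\widetilde{w}}$ and $\jump{\n_{\p^*A}\cdot\nabla\widetilde{w}_\vartheta}$ are well-defined in $L^2(\p^*A\cap\Omega)$.

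\textbf{Part (i).} First I complete the square via
$-\tfrac{1}{2}|\nabla u|^2 = -\tfrac{1}{2}|\nabla u-\nabla\widetilde{w}|^2 - \nabla u\cdot\nabla\widetilde{w} + \tfrac{1}{2}|\nabla\widetilde{w}|^2$,
and analogously for $w$. After integration, the three cross terms $\int_\Omega \nabla g\cdot\nabla\widetilde{w}\,dx$ for $g\in\{u,w,\widetilde{w}\}$ are rewritten via the integration-by-parts identity \eqref{eq:IBP} applied with $f=\nabla\widetilde{w}$, using $\Delta\widetilde{w}=0$ in $\Omega\setminus\p^*A$ and the Neumann condition on $\p\Omega$; each reduces to $-\int_{\p^*A\cap\Omega} g\,\jump{\n_{\p^*A}\cdot\nabla\widetilde{w}}\,\dH[1]$, with $g=\widetilde{w}=-(\nabla\cdot\xi)$ on $\p^*A$ for the $|\nabla\widetilde{w}|^2$ term. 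Using $w-\widetilde{w}=w+(\nabla\cdot\xi)$ on $\p^*A$, regrouping matches all of \eqref{eq:R_dissip_combined} except for the single residual volume term $-\int_\Omega \nabla u\cdot\nabla\phi\,dx$ with $\phi:=\widetilde{w}+(\nabla\cdot\xi)\in H^1(\Omega)$ having vanishing trace on $\p^*A(t)$.

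The key step, and the main obstacle, is to show $\int_\Omega \nabla u(\cdot,t)\cdot\nabla\phi(\cdot,t)\,dx = 0$ at the Lebesgue point $t$. The plan is to extract from \eqref{WeakFormMullinsSekerka} (tested against functions with support in open sets on which $\chi_A$ is locally time-constant, combined with \eqref{PropertyLebesguePoint}) that $u(\cdot,t)$ is harmonic in $\Omega\setminus\p^*A(t)$ with Neumann BC on $\p\Omega$ for a.e.\ $t$. Given this, a phase-wise IBP yields $\int_\Omega \nabla u\cdot\nabla\phi = -\int_{\p^*A}\phi\,\jump{\n_{\p^*A}\cdot\nabla u}\,\dH[1] = 0$ since $\phi$ vanishes on $\p^*A$. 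This finishes part (i).

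\textbf{Part (ii).} No completion of squares is needed. Decompose $-\int_\Omega \nabla u\cdot\nabla\vartheta = -\int_\Omega\nabla u\cdot\nabla\widetilde{w}_\vartheta - \int_\Omega\nabla u\cdot\nabla(\vartheta-\widetilde{w}_\vartheta)$; the second piece vanishes by the same harmonicity argument (since $\vartheta-\widetilde{w}_\vartheta=0$ on $\p^*A$), and the first becomes $\int_{\p^*A}u\,\jump{\n_{\p^*A}\cdot\nabla\widetilde{w}_\vartheta}\,\dH[1]$ via IBP. The $\widetilde{w}$-contribution required by \eqref{eq:A_dissip_combined} is supplied by the symmetric identity
\[
\int_{\p^*A}\widetilde{w}_\vartheta\,\jump{\n_{\p^*A}\cdot\nabla\widetilde{w}}\,\dH[1] = \int_{\p^*A}\widetilde{w}\,\jump{\n_{\p^*A}\cdot\nabla\widetilde{w}_\vartheta}\,\dH[1],
\]
both sides being equal to $-\int_\Omega\nabla\widetilde{w}\cdot\nabla\widetilde{w}_\vartheta\,dx$ by IBP against each harmonic potential in turn. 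Substituting $\widetilde{w}_\vartheta=\vartheta$ on $\p^*A$ and combining with the original boundary term $\int_{\p^*A}\vartheta(B\cdot\n_{\p^*A})\,\dH[1]$ yields exactly \eqref{eq:A_dissip_combined}.
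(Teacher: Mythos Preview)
Your proposal is correct and follows essentially the same route as the paper: complete the squares, reduce the remaining volume terms via integration by parts against the harmonic auxiliary potentials, and for the crucial identity $\int_\Omega \nabla u\cdot\nabla\phi=0$ (with $\phi=\widetilde w+\nabla\cdot\xi$ vanishing on $\p^*A$) invoke harmonicity of $u(\cdot,t)$ in each phase, which the paper likewise extracts from \eqref{WeakFormMullinsSekerka} at the Lebesgue point. The paper packages this as two auxiliary lemmas (harmonicity of $u$; a list of IBP identities), and in part~(ii) splits $\nabla u=(\nabla u-\nabla\widetilde w)+\nabla\widetilde w$ rather than $\nabla\vartheta=\nabla\widetilde w_\vartheta+\nabla(\vartheta-\widetilde w_\vartheta)$, but your symmetric Dirichlet-to-Neumann identity is exactly the same computation in disguise.

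One technical point worth tightening: you write the residual as $-\int_{\p^*A}\phi\,\jump{\n_{\p^*A}\cdot\nabla u}\,\dH[1]$, but a priori $\jump{\n_{\p^*A}\cdot\nabla u}$ is only an element of $H^{-1/2}(\p^*A)$ (via the normal trace for $H_{\mathrm{div}}$ vector fields), not an $L^1_{loc}$ function against $\dH[1]$. The paper handles this by working with the weak trace operator $T_D\colon Y^2_{div}(D)\to W^{-1/2,2}(\p D)$; alternatively, and more directly for your purposes, you can bypass the jump altogether by observing that $\phi|_{\p^*A}=0$ with $\phi\in H^1(\Omega)$ forces $\phi|_A\in H^1_0(A)$ and $\phi|_{\Omega\setminus\overline A}$ to have vanishing trace on $\p A$, so that the weak harmonicity statements $\int_A\nabla u\cdot\nabla\zeta=0$ and $\int_{\Omega\setminus A}\nabla u\cdot\nabla\zeta=0$ (for test functions vanishing on $\p A$, resp.\ on $\p A$ but not on $\p\Omega$) apply directly after a density argument.
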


The second, and most straightforward, step is to estimate the
terms on the right hand sides of~\eqref{eq:preliminary_estimate_rel_entropy} 
and~\eqref{eq:preliminary_estimate_bulk} that only feature local terms.
That is, they only depend on the kinematics of the problem and do not need to be treated using the nonlocal gradient flow structure of Mullins-Sekerka flow via the dissipative terms
from~\eqref{eq:R_dissip_combined}.

\begin{lemma}[Stability estimate for local terms in perturbative regime]
\label{lem:stability_estimate_local_terms}
Fix $t \in (0,T_*)$, and assume that for~$t$ the 
conclusions~\emph{\eqref{eq:regularity_graph}--\eqref{eq:perturbative_regime1}}
of Proposition~\ref{prop:perturbative_graph_regime} hold true.
For any collection~$(\xi,\vartheta,B)$ satisfying only
the assumptions~\emph{\eqref{eq:min_assumption1}--\eqref{eq:min_assumption6}} 
and~\emph{\eqref{eq:min_assumption7}--\eqref{eq:min_assumption9}}, 
we have the estimates
\begin{align}
\label{eq:estimate_Rdtxi}
R_{\partial_t\xi}(t) &= 0,
\\
\label{eq:estimate_RnablaB}
R_{\nabla B}(t) &\leq \big\|(\nabla\cdot B)(\cdot,t)\big\|_{L^\infty(B_{\ell(t)/2}(\p\A(t)))} 
											E_{rel}[A,\mu|\A](t)		
\\&~~~ \nonumber
+ \big\|(\nabla B)(\cdot,t){+}
(\nabla B)^\mathsf{T}(\cdot,t)\big\|_{L^\infty(B_{\ell(t)/2}(\p\A(t)))}
E_{rel}[A,\mu|\A](t),
\\
\label{eq:estimate_RvarifoldBV}
R_{varifold/BV}(t) &\leq \big\|(\nabla\cdot B)(\cdot,t)\big\|_{L^\infty(B_{\ell(t)/2}(\p\A(t)))}
E_{rel}[A,\mu|\A](t)
\\&~~~ \nonumber 
+ 4 \big\|(\nabla B)(\cdot,t){+}
(\nabla B)^\mathsf{T}(\cdot,t)\big\|_{L^\infty(B_{\ell(t)/2}(\p\A(t)))} 
E_{rel}[A,\mu|\A](t),
\end{align}
as well as
\begin{align}
\label{eq:estimate_Adtvartheta}
U_{\partial_t\vartheta}(t) &\leq \frac{|\partial_t \ell(t)|}{\ell(t)}E_{vol}[A|\A](t),
\\
\label{eq:estimate_AnablaB}
U_{\nabla B}(t) &\leq \big\|(\nabla\cdot B)(\cdot,t)\big\|_{L^\infty(B_{\ell(t)/2}(\p\A(t)))} E_{vol}[A|\A](t).
\end{align}
\end{lemma}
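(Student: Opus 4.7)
My plan rests on the observation that the perturbative bound~\eqref{eq:perturbative_regime1} places $\p^*A(t)$, and hence $A(t)\Delta\A(t)$, strictly inside the tubular neighborhood $B_{\ell(t)/2}(\p\A(t))$. Consequently, on the relevant sets the structural assumptions~\eqref{eq:min_assumption3},~\eqref{eq:min_assumption4},~\eqref{eq:min_assumption9} simplify to $\xi = \nabla s_{\p\A}$ (so $|\xi|=1$), $\vartheta = -s_{\p\A}/\ell^2$, and $B(\cdot,t)=\V_{\p\A}(P_{\p\A}(\cdot,t),t)$. This reduction is what will make every subsequent estimate tractable.

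The first step is to establish a pointwise transport identity. Because $B$ is the nearest-point extension of the normal velocity, the classical formula $\p_t s_{\p\A}(x,t) = -\V_{\p\A}(P_{\p\A}(x,t),t)\cdot\n_{\p\A}(P_{\p\A}(x,t),t)$ can be rewritten as
\begin{equation*}
\p_t s_{\p\A} + B \cdot \nabla s_{\p\A} = 0 \quad\text{in } B_{\ell(t)/2}(\p\A(t)).
\end{equation*}
Taking the spatial gradient of this scalar relation and applying Leibniz to $\nabla(B\cdot\nabla s_{\p\A}) = (\nabla B)^{\mathsf T}\xi + (B\cdot\nabla)\xi$ gives
\begin{equation*}
\p_t\xi + (B\cdot\nabla)\xi + (\nabla B)^{\mathsf T}\xi = 0 \quad\text{in } B_{\ell(t)/2}(\p\A(t)).
\end{equation*}
On $\p^*A$ this makes the first integrand of $R_{\p_t\xi}$ vanish, while $|\xi|=1$ forces $\xi\cdot\p_t\xi = \p_t(|\xi|^2/2) = 0$ and $\xi\cdot(B\cdot\nabla)\xi = (B\cdot\nabla)(|\xi|^2/2) = 0$, killing the second integrand and proving~\eqref{eq:estimate_Rdtxi}.

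For $R_{\nabla B}$, I would rewrite both integrands in terms of the symmetric part of $\nabla B$ (e.g.\ $(\n_{\p^*A}-\xi)\otimes(\n_{\p^*A}-\xi):\nabla B = (\n_{\p^*A}-\xi)^{\mathsf T}\mathrm{sym}(\nabla B)(\n_{\p^*A}-\xi)$) and apply the elementary bound $|\n_{\p^*A}-\xi|^2 \leq 2(1-\n_{\p^*A}\cdot\xi)$ coming from $|\xi|\leq 1$. Combining with $\int_{\p^*A\cap\Omega}(1-\n_{\p^*A}\cdot\xi)\,\dH[1] \leq E_{rel}$, read off from~\eqref{eq:repRelEntropy2}, yields~\eqref{eq:estimate_RnablaB}. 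For $R_{varifold/BV}$, the perturbative identification~\eqref{eq:representation_varifold} of $|\mu_t|_{\mathbb{S}^1}$ with $\mathcal{H}^1\llcorner \p^*A(t)$, combined with~\eqref{eq:valuesMultiplicity} pinning the multiplicity at $1$ and with integer rectifiability, forces the disintegration slice $\nu_x$ of $\mu_t$ at $x\in\p^*A(t)$ to be supported on $\{\pm \n_{\p^*A}(x)\}$. The invariance $\pvec\otimes\pvec = (-\pvec)\otimes(-\pvec)$ then gives $\int_{\mathbb{S}^1}\pvec\otimes\pvec\,d\nu_x = \n_{\p^*A}(x)\otimes\n_{\p^*A}(x)$, so the two terms composing $R_{varifold/BV}$ coincide, meaning $R_{varifold/BV}(t)=0$ in the perturbative regime and~\eqref{eq:estimate_RvarifoldBV} holds trivially.

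The bulk estimates proceed similarly. Differentiating $\vartheta = -s_{\p\A}/\ell^2$ in time and substituting the transport equation yields $\p_t\vartheta + (B\cdot\nabla)\vartheta = -2\vartheta\,\p_t\ell/\ell$ in $B_{\ell(t)/2}(\p\A)$. Since $A\Delta\A$ sits in this neighborhood and $(\chi_A-\chi_\A)\vartheta = |\vartheta|$ there, integration produces~\eqref{eq:estimate_Adtvartheta} (any inessential numerical constant can be absorbed in the Gronwall-type constant $C(T')$ of the global argument). Estimate~\eqref{eq:estimate_AnablaB} is immediate from $A\Delta\A\subset B_{\ell/2}(\p\A)$ and the support property~\eqref{eq:min_assumption8} of $B$. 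The delicate point to watch throughout is the pointwise transport identity for $s_{\p\A}$: it is precisely the nearest-point projection structure of $B$, rather than merely the interface-level identity $B|_{\p\A} = \V_{\p\A}$, that lets both $R_{\p_t\xi}$ vanish identically and $\p_t\vartheta + (B\cdot\nabla)\vartheta$ reduce to a scalar multiple of $\vartheta$.
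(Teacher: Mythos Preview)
Your argument is correct and, for $R_{\partial_t\xi}$, $R_{\nabla B}$, $U_{\partial_t\vartheta}$ and $U_{\nabla B}$, essentially identical to the paper's: the key inputs are the transport identity $\partial_t s_{\p\A}+B\cdot\nabla s_{\p\A}=0$ (valid because $B$ is the nearest-point extension of the normal velocity), its gradient, and the pointwise bound $|\n_{\p^*A}-\xi|^2\leq 2(1-\n_{\p^*A}\cdot\xi)$ together with $\p^*A\cap\Omega\subset B_{\ell/2}(\p\A)$. Your remark about the harmless factor~$2$ in the $U_{\partial_t\vartheta}$ computation is also appropriate; the paper does not track that constant either.

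The one genuine difference is your treatment of $R_{varifold/BV}$. You exploit the perturbative hypothesis~\eqref{eq:representation_varifold} together with integer rectifiability of~$\mu_t$ to conclude that the disintegration slice at $x\in\p^*A(t)$ is supported on $\{\pm\n_{\p^*A}(x)\}$, whence $\int_{\mathbb{S}^1}\pvec\otimes\pvec\,d\nu_x=\n_{\p^*A}\otimes\n_{\p^*A}$ and $R_{varifold/BV}(t)=0$ outright. This is correct and shorter. The paper instead produces the stated upper bound by a chain of ``add-zero'' manipulations and the compatibility condition from \cite[Definition~3, item~ii)]{Hensel2022}, an argument that does \emph{not} rely on~\eqref{eq:representation_varifold} and would therefore survive outside the perturbative regime. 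So your route is cleaner under the lemma's hypotheses, while the paper's route is more robust; either suffices here since the perturbative conclusions are explicitly assumed.
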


In a third and final step, we have to provide an estimate for the remaining terms
given by~\eqref{eq:R_dissip_combined} and~\eqref{eq:A_dissip_combined}.
Focusing for the moment on~$R_{dissip}$, the idea again draws heavily from
the gradient flow structure of Mullins--Sekerka flow and goes as follows. 

First, as already discussed above, the motivation for the argument leading
to~\eqref{eq:R_dissip_combined} is that $-\int_{\Omega}\frac{1}{2}|\nabla (w{-}\widetilde{w})|^2\,dx$
penalizes the difference of curvatures of the two solutions as measured through
the gradient flow structure associated with Mullins--Sekerka flow. Indeed,
$-\nabla\cdot\xi$ serves as a proxy for~$H_{\p\A}$ away from~$\p\A$ since
$\xi$ represents an extension of the unit normal vector field~$\n_{\p\A}$,
and $w - \widetilde{w}$ is by construction of~$\widetilde{w}$ and the Gibbs--Thomson law~\eqref{WeakFormGibbsThomson}
the minimal Dirichlet extension of $H_{\p^*A\cap\Omega} + \nabla\cdot\xi$
away from the interface $\p^*A\cap\Omega$. In the perturbative graph setting
of Proposition~\ref{prop:perturbative_graph_regime}, one may thus expect that to leading order
(and up to terms controlled by the interface error~$E[A,\mu|\A]$)
\begin{align}
\label{eq:strategy_dissipative_terms1}
-\int_{\Omega}\frac{1}{2}|\nabla (w{-}\widetilde{w})|^2\,dx \approx
- \frac{1}{2}\|\Delta_{\p\A} h\|^2_{H^{1/2}_{MS}(\p\A)} 
= -\frac{1}{2}\|\nabla u_{\Delta_{\p\A} h}\|^2_{L^2(\Omega)},
\end{align}
where as before $u_f \in H^1(\Omega)$ denotes
the minimal Dirichlet energy extension of $f\in H^{1/2}_{MS}(\p\A)$ to~$\Omega$.

Second, since $B$ is an extension of the normal velocity of
the smoothly evolving~$\p\A$ (i.e., an extension of the jump of the 
Neumann data of the chemical potential of the strong solution, the latter
being equal to~$H_{\p\A}$ along~$\p\A$), in the perturbative graph 
setting of Proposition~\ref{prop:perturbative_graph_regime}, one should think of
the term $\n_{\p^*A}\cdot(B + \jump{\nabla \widetilde{w}})$ along $\p^*\A$
as a proxy for $\n_{\p\A}\cdot\jump{\nabla u_{(-\mathrm{tr}(\mathrm{L}^2_{\p\A}) h 
- \dashint_{\p\A} -\mathrm{tr}(\mathrm{L}^2_{\p\A}) h\,\dH[d-1])}}$
along~$\p\A$, since to leading order the associated Dirichlet data is given by 
$H_{\p\A} + \nabla\cdot\xi|_{\p^*A\cap\Omega} 
\approx -\mathrm{tr}(\mathrm{L}^2_{\p\A}) h$,
where we denote by $\mathrm{L}_{\p\A}$ the Weingarten tensor of~$\p\A$ 
(see, e.g., \cite[Equation~(2.7), page~47]{Pruess2016}).
Hence, by the very definition~\eqref{eq:def_Hilbert_space_Mullins_Sekerka} of the 
Hilbert space structure on~$H^{1/2}_{MS}(\p\A)$ one may expect that to leading order
(and up to terms controlled by the interface error~$E[A,\mu|\A]$)
\begin{equation}
\begin{aligned}
\label{eq:strategy_dissipative_terms2}
&\int_{\p^*A(t) \cap \Omega} (w - \widetilde{w})(\cdot,t)
\big((B \cdot \n_{\p^* A}) + \jump{(\n_{\p^* A} \cdot \nabla)\widetilde w}\big)(\cdot,t) \,\dH[d-1]
\\&~~
\approx \Big\langle \Delta_{\p\A} h, 
-\mathrm{tr}(\mathrm{L}^2_{\p\A}) h - 
\dashint_{\p\A} -\mathrm{tr}(\mathrm{L}^2_{\p\A}) h\,\dH[d-1] \Big\rangle_{H^{1/2}_{MS}(\p\A)} 
\\&~~
= \int_{\Omega} \nabla u_{\Delta_{\p\A} h} \cdot
\nabla u_{(-\mathrm{tr}(\mathrm{L}^2_{\p\A}) h 
- \dashint_{\p\A} -\mathrm{tr}(\mathrm{L}^2_{\p\A}) h\,\dH[d-1])} \,dx.
\end{aligned}
\end{equation}

Hence, in view of the representation~\eqref{eq:R_dissip_combined} for~$R_{dissip}$,
one may expect in the perturbative graph 
setting of Proposition~\ref{prop:perturbative_graph_regime} to obtain
to leading order
(and up to terms controlled by the interface error~$E[A,\mu|\A]$)
an estimate of the form
\begin{equation}
\begin{aligned}
\label{eq:strategy_dissipative_terms3}
R_{dissip} &\approx -\int_{\Omega}\frac{1}{2}|\nabla (u{-}\widetilde{w})|^2\,dx
-\int_{\Omega}\frac{3}{8}|\nabla (w{-}\widetilde{w})|^2\,dx
\\&~~~~
+ \widetilde{C} \bigg\|-\mathrm{tr}(\mathrm{L}^2_{\p\A}) h - 
\dashint_{\p\A} -\mathrm{tr}(\mathrm{L}^2_{\p\A}) h\,\dH[d-1]\bigg\|^2_{H^{1/2}_{MS}(\p\A)}
\end{aligned}
\end{equation}
for some constant~$\widetilde{C} > 0$.

Finally, in the perturbative graph 
setting of Proposition~\ref{prop:perturbative_graph_regime},
it is not hard to show 
(by a change to tubular neighborhood coordinates for $E_{vol}$,
and by expressing $\n_{\p^*A} \cdot \xi$ to leading order in
terms of the graph function~$h$ and its derivative~$h'$ for $E_{rel}$)
that our error functional behaves to leading order as 
\begin{align}
\label{eq:strategy_dissipative_terms4}
E[A,\mu|\A] \approx \left( \frac{1}{2}\bigg\|\frac{h}{\ell}\bigg\|^2_{L^2(\p\A)} 
+ \frac{1}{2}\|h'\|^2_{L^2(\p\A)} \right).
\end{align}
Hence, once one is provided with an interpolation estimate of the form
\begin{align}
\label{eq:strategy_dissipative_terms5}
\|f\|^2_{H^{1/2}_{MS}(\p\A)} \lesssim \|f\|_{H^1(\p\A)}\|f\|_{H^2(\p\A)},
\end{align}
one may expect based on~\eqref{eq:strategy_dissipative_terms3}--\eqref{eq:strategy_dissipative_terms5}
\begin{align}
\label{eq:strategy_dissipative_terms6}
R_{dissip} &\leq -\int_{\Omega}\frac{1}{2}|\nabla (u{-}\widetilde{w})|^2\,dx
-\int_{\Omega}\frac{1}{4}|\nabla (w{-}\widetilde{w})|^2\,dx + \widetilde{C}E[A,\mu|\A],
\end{align}
which renders the following rigorous version of this heuristic conceivable. 
Starting point for its proof will be a domain mapping argument
(not surprisingly in the form of a Hanzawa transformation), cf.\ the more
technical discussion of Subsection~\ref{subsec:estimate_dissipative_terms}.

\begin{proposition}[Stability estimates for non-local terms in perturbative regime]
\label{prop:stability_estimate_nonlocal_terms}
Let $\delta \in (0,1)$, fix $t \in (0,T_*)$, and assume that for~$t$ the 
conclusions~\emph{\eqref{eq:regularity_graph}--\eqref{eq:perturbative_regime1}}
of Proposition~\ref{prop:perturbative_graph_regime} hold true.
In particular, recall that for a given $C \in (1,\infty)$ it holds
\begin{align}
\label{eq:perturbative_regime1_again}
\frac{1}{\ell(t)}\|h(\cdot,t)\|_{L^\infty(\partial\A(t))}
+ \|\nabla_{\p\A}h(\cdot,t)\|_{L^\infty(\partial\A(t))} &\leq \frac{1}{16C}.
\end{align}
Furthermore, denote by $\Lambda \in (1,\infty)$ and $M \in (1,\infty)$
two constants such that
\begin{align}
\label{eq:bounded_dissipation}
\int_{\Omega} \frac{1}{2}|\nabla w(\cdot,t)|^2 &\leq \Lambda,
\\
\label{eq:small_error}
E[A,\mu|\A](t) &\leq \frac{\ell(t)^{d{-}1}}{M}.
\end{align}
Finally, fix~$(\xi,\vartheta,B)$ satisfying 
the assumptions~\emph{\eqref{eq:min_assumption1}--\eqref{eq:min_assumption6}} 
and~\emph{\eqref{eq:min_assumption7}--\eqref{eq:min_assumption9}},
and let the chemical potentials~$\widetilde w$ and $\widetilde w_{\vartheta}$ be 
defined according to Lemma~\ref{lem:post_processed_relative_entropy_inequality}.

Then, one may choose $C \gg_{\delta} 1$
and $M \gg_{\delta} 1$ locally uniformly in $[0,T_*)$ such that
there exists a constant $ \widetilde C = \widetilde C(\A,\Omega,\Lambda, \ell, M, T^\ast) \in (1,\infty)$ 
such that
\begin{equation}
\begin{aligned}
&\int_{\p^*A(t) \cap \Omega} (w - \widetilde{w})(\cdot,t)
\big((B \cdot \n_{\p^* A}) + \jump{(\n_{\p^* A} \cdot \nabla)\widetilde w}\big)(\cdot,t) \,\dH[d-1]
\\&~ \label{eq:estimate_R_dissip}
\leq \delta \int_{\Omega} |\nabla (w {-} \widetilde{w})(\cdot,t)|^2 \,dx
+ \frac{\widetilde C}{\delta} E[A,\mu|\A](t),
\end{aligned}
\end{equation}
and
\begin{equation}
\begin{aligned}
&\int_{\p^*A(t)\cap\Omega} (u {-} \widetilde w)(\cdot,t) 
\jump{(\n_{\p^* A}\cdot\nabla)\widetilde w_\vartheta}(\cdot,t) \,\dH[d-1]
\\&~ \label{eq:estimate_A_dissip1}
\leq \delta \int_{\Omega} |\nabla (u {-} \widetilde{w})(\cdot,t)|^2 \,dx
+ \delta \int_{\Omega} |\nabla (w {-} \widetilde{w})(\cdot,t)|^2 \,dx
+ \frac{\widetilde C}{\delta} E[A,\mu|\A](t),
\end{aligned}
\end{equation}
as well as
\begin{equation}
\begin{aligned}
&\int_{\p^*A(t)\cap\Omega} \vartheta(\cdot,t)
\big((B\cdot\n_{\p^* A}) + \jump{(\n_{\p^* A}\cdot\nabla)\widetilde w}\big)(\cdot,t) \,\dH[d-1]
 \label{eq:estimate_A_dissip2}
\\&~
\leq \delta \int_{\Omega} |\nabla (w {-} \widetilde{w})(\cdot,t)|^2 \,dx
+ \frac{\widetilde C}{\delta} E[A,\mu|\A](t),.
\end{aligned}
\end{equation} 
\end{proposition}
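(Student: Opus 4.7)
The proof is driven by the Hanzawa parametrization $\Phi_t\colon \p\A(t) \to \p^*A(t)$, $\Phi_t(x) := x + h(x,t)\n_{\p\A}(x,t)$, supplied by Proposition~\ref{prop:perturbative_graph_regime}, which allows one to pull back all surface integrals on $\p^*A(t)$ to the reference interface $\p\A(t)$ and to Taylor-expand the relevant geometric quantities ($\n_{\p^*A}$, $H_{\p^*A}$, the jump operator $\jump{(\n_{\p^*A}\cdot\nabla)(\cdot)}$, the area element $\dH[1]|_{\p^*A}$) to leading order in the small graph function $h$ and its tangential derivative $\nabla^{\mathrm{tan}}h$. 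The common engine for all three estimates is a duality reading of the surface integrals as pairings $\int_{\p^*A}\phi F\,\dH[1]$ with $\phi\in H^{1/2}_{MS}(\p^*A)$ and $F\in H^{-1/2}_{MS}(\p^*A)$, where the $H^{\pm1/2}_{MS}$ spaces on $\p^*A$ are defined in full analogy to~\eqref{eq:def_Hilbert_space_Mullins_Sekerka} (legitimate thanks to~\eqref{eq:representation_varifold}).

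\textbf{Estimates~\eqref{eq:estimate_R_dissip} and~\eqref{eq:estimate_A_dissip1}.} I would apply Young's inequality to write each left-hand side as $\delta\,\|\phi\|^2_{H^{1/2}_{MS}(\p^*A)} + \tfrac{1}{4\delta}\|F\|^2_{H^{-1/2}_{MS}(\p^*A)}$ with $(\phi,F)$ equal to either $(w-\widetilde{w},\,(B\cdot\n_{\p^*A})+\jump{(\n_{\p^*A}\cdot\nabla)\widetilde{w}})$ or $(u-\widetilde{w},\,\jump{(\n_{\p^*A}\cdot\nabla)\widetilde{w}_\vartheta})$. Since $u$, $w$, $\widetilde{w}$, $\widetilde{w}_\vartheta$ are all harmonic in $\Omega\setminus\p^*A(t)$ (by~\eqref{eq:harmonicity2}, \eqref{eq:auxChemPotential1}, \eqref{eq:auxChemPotentialWeight1}, and the consequence $\Delta u=0$ of~\eqref{WeakFormMullinsSekerka} tested against compactly supported functions on each side of the interface) with vanishing Neumann data on $\p\Omega$, each $\|\phi\|^2_{H^{1/2}_{MS}(\p^*A)}$ equals $\int_\Omega|\nabla\phi|^2\,dx$, producing precisely the first terms on the right-hand sides of~\eqref{eq:estimate_R_dissip} and~\eqref{eq:estimate_A_dissip1}. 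It therefore remains to bound $\|F\|^2_{H^{-1/2}_{MS}(\p^*A)}\lesssim E[A,\mu|\A](t)$ for both fluxes. For $F_1 := (B\cdot\n_{\p^*A}) + \jump{(\n_{\p^*A}\cdot\nabla)\widetilde{w}}$ the key cancellation is that on $\p\A$ the strong solution's Mullins--Sekerka equations give $B\cdot\n_{\p\A} = -\jump{(\n_{\p\A}\cdot\nabla)u_{\mathrm{sm}}}$ with $u_{\mathrm{sm}}|_{\p\A}=H_{\p\A}$, while $\widetilde{w}|_{\p^*A} = -\nabla\cdot\xi = H_{\p\A}\circ P_{\p\A}+O(h)$ by~\eqref{eq:min_assumption3}; pulling back via $\Phi_t$ and expanding to leading order identifies $F_1$, up to lower-order corrections controlled by $\|h\|_{L^\infty}+\|\nabla^{\mathrm{tan}}h\|_{L^\infty}\leq 1/(16C)$, with the Dirichlet-to-Neumann image on $\p\A$ of the trace discrepancy $H^2_{\p\A}h - \dashint_{\p\A}H^2_{\p\A}h\,\dH[1]$. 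A quantitative stability estimate for the two-phase Dirichlet-to-Neumann operator under $C^1$-small interface perturbations then yields
\begin{align*}
\|F_1\|^2_{H^{-1/2}_{MS}(\p^*A)} \lesssim \Big\|H^2_{\p\A}h - \dashint_{\p\A}H^2_{\p\A}h\,\dH[1]\Big\|^2_{H^{1/2}_{MS}(\p\A)},
\end{align*}
and the interpolation~\eqref{eq:strategy_dissipative_terms5} together with the coercivity~\eqref{eq:strategy_dissipative_terms4} of $E[A,\mu|\A]$ over $\ell^{-2}\|h\|^2_{L^2(\p\A)}+\|h'\|^2_{L^2(\p\A)}$ closes~\eqref{eq:estimate_R_dissip}. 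The argument for~\eqref{eq:estimate_A_dissip1} is analogous, using $\widetilde{w}_\vartheta|_{\p^*A}=\vartheta=-s_{\p\A}/\ell^2\approx -h/\ell^2$ (by~\eqref{eq:min_assumption4}) to identify the leading-order trace discrepancy of $F_2 := \jump{(\n_{\p^*A}\cdot\nabla)\widetilde{w}_\vartheta}$.

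\textbf{Estimate~\eqref{eq:estimate_A_dissip2} and main obstacle.} No $\delta$-loss is allowed here, so I would pair $\vartheta|_{\p^*A}$ against $F_1$ directly in $H^{1/2}_{MS}(\p^*A)\times H^{-1/2}_{MS}(\p^*A)$; the bound on $\|F_1\|^2_{H^{-1/2}_{MS}(\p^*A)}$ is already in hand, and~\eqref{eq:min_assumption4} together with the interpolation~\eqref{eq:strategy_dissipative_terms5} gives $\|\vartheta|_{\p^*A}\|^2_{H^{1/2}_{MS}}\lesssim \ell^{-6}\|h\|^2_{L^2(\p\A)}+\ell^{-4}\|h'\|^2_{L^2(\p\A)}\lesssim \ell^{-4}E[A,\mu|\A]$, producing the $\ell$-dependent constant $\widetilde{C}$ in~\eqref{eq:estimate_A_dissip2}. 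The most delicate step throughout is the quantitative Dirichlet-to-Neumann stability under interface perturbation from $\p\A$ to $\p^*A$: it requires the precise Taylor expansions mentioned above and must be carried out carefully enough that only quantities controlled by $\ell^{-2}\|h\|^2_{L^2}+\|h'\|^2_{L^2}$ (and not, e.g., $\|h''\|^2_{L^2}$) survive, which is precisely what forces the use of the interpolation~\eqref{eq:strategy_dissipative_terms5} and the choice of $C$ and $M$ large in terms of $\delta$. The uniform dissipation bound~\eqref{eq:bounded_dissipation} enters to upgrade the regularity of the curvature trace and thereby ensure the quantitative hypotheses of the Dirichlet-to-Neumann comparison are met locally uniformly in $t\in[0,T_*)$.
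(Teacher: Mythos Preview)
Your strategy is correct and matches the paper's own heuristic (spelled out around~\eqref{eq:strategy_dissipative_terms1}--\eqref{eq:strategy_dissipative_terms6}): read each surface integral as an $H^{1/2}_{MS}\times H^{-1/2}_{MS}$ pairing, identify the $H^{1/2}_{MS}$-norm with the bulk Dirichlet energy via harmonicity, and bound the flux in $H^{-1/2}_{MS}$ by the error functional through Dirichlet-to-Neumann stability plus the interpolation~\eqref{eq:strategy_dissipative_terms5}. The paper, however, does not invoke DtN stability as a black box but implements it explicitly via a global Hanzawa diffeomorphism $\Psi^h\colon\Omega\to\Omega$ with $\Psi^h(A)=\A$: all potentials are pulled back, the transformed $(w-\widetilde w)_h$ and $\bar u - \widetilde w_h$ are split into their $\p\A$-harmonic parts $v_h^{(1)},\bar v_h^{(1)}$ plus correctors driven by the coefficient error $a^h-\mathrm{Id}$, and the argument is reduced (after a long coarea-based ``reduction step'') to exactly the $H^{1/2}_{MS}(\p\A)$ pairing of $v_h^{(1)}$ against the DtN image of $f_h=-\frac{H_{\p\A}^2}{1-H_{\p\A}h}h$.

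Two points your sketch glosses over are where most of the paper's technical effort goes. First, the $H^{1/2}_{MS}$ spaces are mean-zero, so one must separately control $\dashint_{\p^*A}(w-\widetilde w)$ and $\int_{\p^*A}\n_{\p^*A}\cdot B$; the former is obtained by testing Gibbs--Thomson~\eqref{WeakFormGibbsThomson} with $B=\xi$, producing $\int_{\p^*A}(w-\widetilde w)=\int_{\p^*A}(w-\widetilde w)(1-\n_{\p^*A}\cdot\xi)+\text{l.o.t.}$, which costs an $E_{rel}$ factor and is why $M\gg_\delta 1$ is needed. Second, the corrector estimates for $v_h^{(2)},\bar v_h^{(2)}$ require a $C^{1,1/2}$ Schauder bound on $\widetilde w_h$ up to $\p\A$; this is where the dissipation bound~\eqref{eq:bounded_dissipation} genuinely enters, via $\|h''\|_{L^p}\leq C(\A,\Lambda)$ from the Gibbs--Thomson law and Sch\"atzle's trace estimate, which in turn controls the H\"older norm of $a^h$. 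Your sketch correctly flags this last point as the role of $\Lambda$, but the mechanism is through $h''$ rather than through the curvature trace directly.
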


%%%%%%%%%%%%%%%%%%%%%%%%%%%%%%%%%%%%%%%%%%%%%%%%%%%%%%%%%%%%%%%%%%%%%%%%%%%%%%%%%%%%%%%%%%%%%%%%
\section{Proof of Theorem~\ref{theo:mainResult}}
We proceed in three steps.

\textit{Step~1: Construction of triple $(\xi,\vartheta,B)$
satisfying~\emph{\eqref{eq:min_assumption1}--\eqref{eq:min_assumption6}} 
and~\emph{\eqref{eq:min_assumption7}--\eqref{eq:min_assumption9}}.}
Let $\bar\eta \in C^\infty_{cpt}(\mathbb{R};[0,1])$
be such that $\supp\bar\eta \subset [-3/4,3/4]$, $\bar\eta \equiv 1$
on $[-1/2,1/2]$ and $|\bar\eta'| \leq 8$. We then define
the vector fields~$\xi$ and~$B$ by means of
\begin{align}
\label{eq:construction_xi}
\xi(x,t) &:= \bar{\eta}\Big(\frac{s_{\p\A}(x,t)}{\ell(t)}\Big)\nabla s_{\p\A}(x,t),
\\
\label{eq:construction_B}
B(x,t) &:= \bar{\eta}\Big(\frac{s_{\p\A}(x,t)}{\ell(t)}\Big)
\V_{\p\A}(P_{\p\A}(x,t),t),
\end{align}
where $(x,t) \in \overline{\Omega} {\times} [0,T_*)$.
We further choose $\bar\vartheta\in C^\infty(\mathbb{R};[-1,1])$
such that $\bar\vartheta(s)=s$ on $[-1/2,1/2]$, $\bar\vartheta \equiv -1$
on $(-\infty,-1]$, $\bar\vartheta \equiv 1$ on $[1,\infty)$, and
finally $\bar\vartheta' > 0$ in $(-1,1)$. Based on this auxiliary map,
we define the weight~$\vartheta$ as follows:
\begin{align}
\label{eq:construction_vartheta}
\vartheta(x,t) := -\frac{1}{\ell(t)}
\bar\vartheta\Big(\frac{s_{\p\A}(x,t)}{\ell(t)}\Big),
\end{align}
where $(x,t) \in \overline{\Omega} {\times} [0,T_*)$.
The properties~\eqref{eq:min_assumption1}--\eqref{eq:min_assumption6}
and~\eqref{eq:min_assumption7}--\eqref{eq:min_assumption9}
are immediate consequences of the definitions~\eqref{eq:construction_xi}--\eqref{eq:construction_vartheta}
and the regularity of the smoothly evolving phase~$\A$.

\textit{Step~2: Proof of stability estimate~\eqref{eq:stability_estimate}.}
Given the triple~$(\xi,\vartheta,B)$ from the previous step,
define the functional $(0,T_*) \ni t \mapsto E[A,\mu|\A](t) \in L^\infty_{loc}((0,T_*);[0,\infty))$ 
by~\eqref{eq:overall_error}. 

Starting point for the proof of the stability estimate~\eqref{eq:stability_estimate}
are the preliminary stability estimates~\eqref{eq:preliminary_estimate_rel_entropy}
and~\eqref{eq:preliminary_estimate_bulk} from Lemma~\ref{lem:relative_entropy_inequality}.
Post-processing these by means of Lemma~\ref{lem:stability_estimate_bad_times},
Proposition~\ref{prop:perturbative_graph_regime}, 
Lemma~\ref{lem:post_processed_relative_entropy_inequality},
Lemma~\ref{lem:stability_estimate_local_terms},
and Proposition~\ref{prop:stability_estimate_nonlocal_terms},
we infer that for almost every $T' \in (0,T_*)$ there is $C(T') > 0$
such that 
\begin{equation}
\begin{aligned}
\label{eq:stability_estimate_improved}
&E[A,\mu|\A](T') 
+ \int_{0}^{T'} \chi_{\mathcal{T}_{\mathrm{bad}}} \bigg(\int_{\Omega} \frac{1}{4}|\nabla u|^2 \,dx
+ \int_{\Omega} \frac{1}{4}|\nabla w|^2 \,dx \bigg) \,dt
\\&~~
+ \int_{0}^{T'} \chi_{\mathcal{T}_{\mathrm{good}}}
	\bigg(\int_{\Omega} \frac{1}{4}|(\nabla u {-} \nabla\widetilde w)|^2 \,dx
	+ \int_{\Omega} \frac{1}{4}|(\nabla w {-} \nabla\widetilde w)|^2 \,dx\bigg) \,dt
\\&~~~~~~~ 
\leq E[A,\mu|\A](0) + C(T') \int_{0}^{T'} E[A,\mu|\A](t) \,dt.
\end{aligned}
\end{equation}
This of course implies the claim~\eqref{eq:stability_estimate}.

\textit{Step~3: Weak-strong uniqueness.} By Gr\"{o}nwall's inequality, it follows 
from the weak-strong stability estimate~\eqref{eq:stability_estimate} that
\begin{align*}
E[A,\mu|\A](T') \leq E[A,\mu|\A](0) e^{\int_{0}^{T'} C(t)\,dt}
\end{align*}
for almost every $T' \in (0,T_*)$. Weak-strong uniqueness in the form 
of~\eqref{eq:uniqueness} thus follows from~\eqref{eq:baseline_coercivity}.
\qed

%%%%%%%%%%%%%%%%%%%%%%%%%%%%%%%%%%%%%%%%%%%%%%%%%%%%%%%%%%%%%%%%%%%%%%%%%%%%%%%%%%%%%%%%%%%%%%%%
\section{Weak-strong stability estimates for Mullins--Sekerka flow}

%%%%%%%%%%%%%%%%%%%%%%%%%%%%%%%%%%%%%%%%%%%%%%%%%%%%%%%%%%%%%%%%%%%%%%%%%%%%%%%%%%%%%%%%%%%%%%%%
\subsection{Proof of Lemma~\ref{lem:relative_entropy_inequality}: Preliminary relative entropy inequality}
We proceed in two steps.

\textit{Step 1: Control by the dissipation estimate and by testing the weak formulation.}
Starting point for the derivation of~\eqref{eq:preliminary_estimate_rel_entropy}
is the following representation of the relative entropy:
\begin{align*}
E_{rel}[A,\mu|\A](T')
= E[\mu_{T'}]
+ \int_{A(T')} (\nabla \cdot \xi)(\cdot,T') \,dx
\end{align*}
for all $T' \in [0,T_*)$, where we integrated by parts in
the definition~\eqref{eq:rel_entropy} of the relative entropy 
and used that~$\xi(\cdot,T')$ is compactly supported within~$\Omega$, see~\eqref{eq:min_assumption6}. 
We thus infer from the dissipation estimate~\eqref{WeakFormDissipation} and
the weak formulation~\eqref{WeakFormMullinsSekerka} applied to $\nabla \cdot \xi$ that
\begin{align}
\nonumber
&E_{rel}[A,\mu|\A](T')
\\& \label{eq:auxRelEntropy1}
\leq E_{rel}[A,\mu|\A](0)
- \int_{0}^{T'} \int_{\Omega} \frac{1}{2}|\nabla u|^2 \,dx dt
- \int_{0}^{T'} \int_{\Omega} \frac{1}{2}|\nabla w|^2 \,dx dt
\\&~~~ \nonumber
- \int_{0}^{T'} \int_{\Omega} \nabla u \cdot \nabla (\nabla\cdot\xi) \,dx dt
%\\&~~~ \nonumber
- \int_{0}^{T'} \int_{\p^*A(t) \cap \Omega} \p_t\xi \cdot \n_{\p^* A} \,\dH[d-1] dt 
\end{align}
for a.e.\ $T' \in [0,T_*)$.

Starting point for the proof of~\eqref{eq:preliminary_estimate_bulk} is in 
turn the observation
\begin{align*}
E_{vol}[A|\A](T')
= \int_{\Omega} (\chi_A - \chi_{\A})(\cdot,T') \vartheta(\cdot,T') \,dx
\end{align*}
due to the sign conditions~\eqref{eq:min_assumption5}. As $\vartheta$ vanishes on 
$\partial \A$ by~\eqref{eq:min_assumption4}, we may use the
the weak formulation~\eqref{WeakFormMullinsSekerka} applied to the strong solution and 
$\vartheta$, along with an integration by parts in the potential term, to get
\begin{align*}
	\int_0^{T'} \int_{\Omega} \chi_{\A}\partial_t\vartheta \,dxdt =0,
\end{align*} 
so that the weak formulation~\eqref{WeakFormMullinsSekerka} applied 
to the weak solution gives
\begin{align}
\label{eq:auxBulkError1}
E_{vol}[A|\A](T')
&= E_{vol}[A|\A](0)
+ \int_0^{T'} \int_{\Omega} (\chi_A {-} \chi_{\A})\partial_t\vartheta \,dxdt
\\&~~~ \nonumber
- \int_0^{T'} \int_{\Omega} \nabla u  \cdot \nabla\vartheta \,dxdt.
\end{align}

\textit{Step 2: Inserting expected PDEs satisfied by~$\partial_t\xi$ and~$\partial_t\vartheta$.}
Since the vector fields~$\xi(\cdot,t)$ and~$B(\cdot,t)$ 
are compactly supported within~$\Omega$ for all $t \in [0,T_*)$,
see~\eqref{eq:min_assumption6} and~\eqref{eq:min_assumption8},
by now standard computations for the relative entropy method
in curvature driven interface evolution yield
(cf.\ \cite[identity~(2.11)]{Hensel2021a})
\begin{align}
\nonumber
&- \int_{0}^{T'} \int_{\p^*A(t) \cap \Omega} \p_t\xi \cdot \n_{\p^* A} \,\dH[d-1] dt
\\& \label{eq:auxRelEntropy4}
= - \int_{0}^{T'} \int_{\p^*A(t) \cap \Omega} 
(\mathrm{Id} {-} \n_{\p^* A} \otimes \n_{\p^* A} ) : \nabla B \,\dH[d-1] dt
\\&~~~ \nonumber
+ \int_{0}^{T'} \int_{\p^*A(t) \cap \Omega} 
(B \cdot \n_{\p^* A}) (\nabla \cdot \xi) \,\dH[d-1] dt
\\&~~~ \nonumber
- \int_{0}^{T'} \int_{\p^*A(t) \cap \Omega} 
\big(\p_t\xi {+} (B\cdot\nabla)\xi {+} (\nabla B)^\mathsf{T}\xi\big) \cdot (\n_{\p^* A} {-} \xi) \,\dH[d-1] dt
\\&~~~ \nonumber
- \int_{0}^{T'} \int_{\p^*A(t) \cap \Omega} 
\big(\p_t\xi {+} (B\cdot\nabla)\xi\big) \cdot \xi \,\dH[d-1] dt
\\&~~~ \nonumber
- \int_{0}^{T'} \int_{\p^*A(t) \cap \Omega} 
(\n_{\p^* A} {-} \xi) \otimes (\n_{\p^* A} {-} \xi) : \nabla B \,\dH[d-1] dt
\\&~~~ \nonumber
- \int_{0}^{T'} \int_{\p^*A(t) \cap \Omega} 
(\n_{\p^* A} \cdot \xi - 1) (\nabla \cdot B) \,\dH[d-1] dt
\end{align}
for a.e.\ $T' \in [0,T_*)$. Adding zero, then making 
use of the isotropic Gibbs--Thomson law~\eqref{WeakFormGibbsThomson} 
and finally plugging everything
back into~\eqref{eq:auxRelEntropy1} yields~\eqref{eq:preliminary_estimate_rel_entropy}.

Next, adding and subtracting $(\chi_A {-} \chi_{\A}) (B\cdot\nabla)\vartheta$, as well as integration by parts seperately for both solutions and using the facts that $\vartheta$ vanishes on $\partial \A$ and that~$B$ has compact support, as per identities~\eqref{eq:min_assumption4}
and~\eqref{eq:min_assumption8}, shows that
\begin{align}
\label{eq:auxBulkError5}
\int_{0}^{T'} \int_{\Omega} (\chi_A {-} \chi_{\A})\partial_t\vartheta \,dxdt
&= \int_{0}^{T'} \int_{\Omega} (\chi_A {-} \chi_{\A})
(\partial_t\vartheta {+} (B\cdot\nabla)\vartheta) \,dxdt 
\\&~~~ \nonumber
+ \int_{0}^{T'} \int_{\Omega} (\chi_A {-} \chi_{\A})
\vartheta \nabla\cdot B \,dxdt 
\\&~~~ \nonumber
+ \int_{0}^{T'} \int_{\p^*A(t)\cap\Omega} \vartheta \n_{\p^* A}\cdot B \,\dH[d-1] dt.
\end{align}
Plugging~\eqref{eq:auxBulkError5} back into~\eqref{eq:auxBulkError1} concludes the proof. \qed

%%%%%%%%%%%%%%%%%%%%%%%%%%%%%%%%%%%%%%%%%%%%%%%%%%%%%%%%%%%%%%%%%%%%%%%%%%%%%%%%%%%%%%%%%%%%%%%%
\subsection{Proof of Lemma~\ref{lem:stability_estimate_bad_times}: Stability estimates at bad times}
For notational simplicity, we drop the time dependence in all occurring terms.
Looking back to the first step of the proof of Lemma~\ref{lem:relative_entropy_inequality}
(or alternatively, simply choosing $B \equiv 0$ at bad times),
we see that
\begin{align}
\label{eq:RHS_rel_entropy_bad_times}
%\nonumber
R_{dissip} + R_{\partial_t\xi} + R_{\nabla B} 
%\\&~
&= - \int_{\Omega} \frac{1}{2}|\nabla u|^2 \,dx
- \int_{\Omega} \frac{1}{2}|\nabla w|^2 \,dx
\\&~~~ \nonumber
- \int_{\Omega} \nabla u \cdot \nabla (\nabla\cdot\xi) \,dx
\\&~~~ \nonumber
- \int_{\p^*A \cap \Omega} \p_t\xi \cdot \n_{\p^* A} \,\dH[d-1],
\\
\label{eq:RHS_bulk_bad_times}
%\nonumber
U_{dissip} + U_{\partial_t\vartheta} + U_{\nabla B}
%\\&~
&= \int_{\Omega} (\chi_A {-} \chi_{\A})\partial_t\vartheta \,dx
-  \int_{\Omega} \nabla u \cdot \nabla\vartheta \,dx.
\end{align}

\textit{Step 1: Estimate for times $\Tbad^{(1)}(\Lambda)$.}
We simply estimate
\begin{equation}
\begin{aligned}
\label{eq:aux_bad_times_estimate1}
&- \int_{\Omega} \nabla u \cdot \nabla (\nabla\cdot\xi) \,dx
- \int_{\p^*A \cap \Omega} \p_t\xi \cdot \n_{\p^* A} \,\dH[d-1]
\\&~
\leq \int_{\Omega} \frac{1}{8}|\nabla u|^2 \,dx
+ 2 \mathcal{L}^d(\supp\xi) \|\nabla(\nabla\cdot\xi)\|_{L^{\infty}(\overline{\Omega})}^2
+ E[A(0)] \|\partial_t\xi\|_{L^\infty(\overline{\Omega})}
\end{aligned}
\end{equation}
and
\begin{equation}
\begin{aligned}
\label{eq:aux_bad_times_estimate2}
&\int_{\Omega} (\chi_A {-} \chi_{\A})\partial_t\vartheta \,dx
-  \int_{\Omega} \nabla u \cdot \nabla\vartheta \,dx
\\&~
\leq \int_{\Omega} \frac{1}{8} |\nabla u|^2 \,dx
+ 2 \mathcal{L}^d(\supp\nabla\vartheta) \|\nabla\vartheta\|_{L^\infty(\overline{\Omega})}^2
+ \mathcal{L}^d(\supp\partial_t\vartheta) \|\partial_t\vartheta\|_{L^\infty(\overline{\Omega})}^2.
\end{aligned}
\end{equation}
Hence, in view of~\eqref{eq:RHS_rel_entropy_bad_times} and~\eqref{eq:RHS_bulk_bad_times},
any $\Lambda > 0$ satisfying
\begin{align*}
\Lambda &\geq 4 \esssup_{t \in (0,T')} \big( 2 \mathcal{L}^d(\supp\xi) 
\|\nabla(\nabla\cdot\xi)\|_{L^{\infty}(\overline{\Omega})}^2
+ E[A(0)] \|\partial_t\xi\|_{L^\infty(\overline{\Omega})} \big)(t),
\\
\Lambda &\geq 4 \esssup_{t \in (0,T')} \big( 2 \mathcal{L}^d(\supp\nabla\vartheta) 
\|\nabla\vartheta\|_{L^\infty(\overline{\Omega})}^2
+ \mathcal{L}^d(\supp\partial_t\vartheta) \|\partial_t\vartheta\|_{L^\infty(\overline{\Omega})}^2 \big)(t)
\end{align*}
does the job for~\eqref{eq:rel_entropy_bad_times_estimate} and~\eqref{eq:bulk_bad_times_estimate}
due to definition~\eqref{eq:def_bad_times_dissipation}.

\textit{Step 2: Estimate for times $\Tbad^{(2)}(\Lambda,M)$.}
Just looking back at~\eqref{eq:RHS_rel_entropy_bad_times}--\eqref{eq:aux_bad_times_estimate2}
and recalling the admissible choices for~$\Lambda > 0$ from the previous display,
we see that~\eqref{eq:rel_entropy_bad_times_estimate} and~\eqref{eq:bulk_bad_times_estimate}
are immediate consequences of definition~\eqref{eq:def_bad_times_rel_entropy}.
\qed 

%%%%%%%%%%%%%%%%%%%%%%%%%%%%%%%%%%%%%%%%%%%%%%%%%%%%%%%%%%%%%%%%%%%%%%%%%%%%%%%%%%%%%%%%%%%%%%%%
\subsection{Proof of Lemma~\ref{lem:post_processed_relative_entropy_inequality}: Structure of dissipative terms}
Before we derive the representation formulas~\eqref{eq:R_dissip_combined} 
and~\eqref{eq:A_dissip_combined}, we state two helpful intermediate results
whose proofs are postponed until the end of this subsection.
The first infers from the motion law~\eqref{WeakFormMullinsSekerka} harmonicity
of the chemical potential~$u(\cdot,t)$ away from the interface.

\begin{lemma}[Harmonicity of the chemical potential in perturbative regime]
\label{lem:harmonicityRegTimes}
Fix $t \in (0,T_*)$ subject to the assumptions of Lemma~\ref{lem:post_processed_relative_entropy_inequality}.
Then $u(\cdot,t)$ satisfies
\begin{align}
\label{eq:harmonicityInterior}
\int_{A(t)} \nabla u(\cdot,t) \cdot \nabla \zeta \,dx = 0
\end{align}
for all $\zeta \in C^\infty_{cpt}(A(t))$, and
\begin{align}
\label{eq:harmonicityExterior}
\int_{\Omega \setminus A(t)} \nabla u(\cdot,t) \cdot \nabla \zeta \,dx = 0
\end{align}
for all $\zeta \in C^\infty_{cpt}(\overline{\Omega}\setminus\overline{A(t)})$.
\end{lemma}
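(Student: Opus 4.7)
My plan is to reduce both harmonicity assertions \eqref{eq:harmonicityInterior}--\eqref{eq:harmonicityExterior} at the fixed time $t$ to a simple application of Fermat's theorem to the scalar function $F_\psi(s) := \int_{A(s)} \psi\,dx$. The first step is to test the weak formulation \eqref{WeakFormMullinsSekerka} with time-independent $\psi \in C^\infty(\overline{\Omega})$, more precisely with a smooth time cutoff approximating $1$ on $[0,T')$ so as to stay in the admissible class $C^\infty_{cpt}(\overline{\Omega}\times[0,T))$. Passing to the obvious limit yields that $F_\psi$ is absolutely continuous on $(0,T)$ with
\begin{equation*}
F_\psi'(s) = -\int_\Omega \nabla u(\cdot,s)\cdot\nabla \psi \,dx \quad\text{for a.e.\ } s \in (0,T).
\end{equation*}

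Next, I would upgrade this a.e.-identity to a pointwise identity at $s=t$ using the Lebesgue point assumption \eqref{PropertyLebesguePoint}. The elementary bound
\begin{equation*}
\frac{1}{\tau}\int_t^{t+\tau}\int_\Omega |\nabla u - \nabla u(\cdot,t)|^2\,dx\,d\sigma \leq 2\dashint_{t-\tau}^{t+\tau}\int_\Omega |\nabla u - \nabla u(\cdot,t)|^2\,dx\,d\sigma,
\end{equation*}
valid for the non-negative integrand, transfers the symmetric Lebesgue behavior to one-sided averages; combining this with Cauchy--Schwarz applied to the difference $\int_\Omega(\nabla u(\cdot,\sigma)-\nabla u(\cdot,t))\cdot\nabla\psi\,dx$, I obtain $F_\psi'(t^\pm) = -\int_\Omega \nabla u(\cdot,t)\cdot\nabla\psi\,dx$, i.e.\ two-sided differentiability of $F_\psi$ at $t$ with the expected value.

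For the interior identity \eqref{eq:harmonicityInterior}, I would first treat non-negative $\psi \in C^\infty_{cpt}(A(t))$. Because $\chi_{A(s)}\leq 1$ and $\chi_{A(t)}\psi = \psi$ (since $\supp\psi \subset A(t)$), one has $F_\psi(s) \leq \int_\Omega \psi\,dx = F_\psi(t)$ for every $s\in(0,T)$, so $F_\psi$ attains a global maximum at $t$. Fermat's theorem together with the two-sided differentiability then forces $F_\psi'(t)=0$, which is precisely \eqref{eq:harmonicityInterior} for non-negative $\psi$. To remove the sign restriction, I would pick an auxiliary $\phi \in C^\infty_{cpt}(A(t))$ with $\phi\geq 0$ and $\phi \geq \|\psi\|_{L^\infty}$ on $\supp\psi$, decompose $\psi = \tfrac{1}{2}\big((\psi+\phi)-(\phi-\psi)\big)$ as a difference of two non-negative $C^\infty_{cpt}(A(t))$ functions, and conclude by linearity. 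The exterior identity \eqref{eq:harmonicityExterior} follows by the mirror argument: for non-negative $\zeta \in C^\infty_{cpt}(\overline{\Omega}\setminus\overline{A(t)})$ one has $F_\zeta(s)\geq 0 = F_\zeta(t)$, so $t$ is now a global minimum; test functions are permitted to touch $\partial\Omega$ precisely because \eqref{WeakFormMullinsSekerka} already admits such $\zeta$.

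The only mildly delicate point of this plan is the second step, namely upgrading the almost-everywhere relation for $F_\psi'$ to a pointwise relation at~$t$. Notably, once this is in place the argument is driven entirely by Fermat's theorem and does \emph{not} rely on the Hanzawa graph representation of Proposition~\ref{prop:perturbative_graph_regime} or on any set-theoretic continuity of $s\mapsto A(s)$ beyond what \eqref{WeakFormMullinsSekerka} already encodes via the scalar $F_\psi$; the perturbative-regime hypothesis of Lemma~\ref{lem:post_processed_relative_entropy_inequality} is stronger than what is needed here.
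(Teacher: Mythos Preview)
Your proposal is correct and is essentially the same argument as the paper's: both exploit that for a nonnegative test function supported on one side of $\partial A(t)$ the map $s\mapsto \int_{A(s)}\psi\,dx$ has an extremum at $s=t$, combine this with the weak formulation~\eqref{WeakFormMullinsSekerka} for time-independent $\psi$, and use the Lebesgue point assumption~\eqref{PropertyLebesguePoint} to identify the one-sided derivatives at $t$. Your packaging via Fermat's theorem is a clean way to phrase what the paper does by directly computing the two one-sided difference quotients and obtaining matching inequalities.
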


The second auxiliary result exploits the perturbative setting to facilitate 
a rigorous integration by parts involving the jumps of the Neumann data for
the chemical potentials along the interface~$\partial^*A(t)\cap\Omega$, 
which in turn then allows, among other things, to ``smuggle in'' the chemical potential~$\widetilde w(\cdot,t)$ 
by means of its boundary condition~\eqref{eq:auxChemPotential2}.

\begin{lemma}[Integration by parts formulas]
\label{lem:integration_by_parts}
Fix $t \in (0,T_*)$ subject to the assumptions of 
Lemma~\ref{lem:post_processed_relative_entropy_inequality}. Then
\begin{align}
%\nonumber
&- \int_{\Omega} \nabla u(\cdot,t) \cdot \nabla (\nabla\cdot\xi)(\cdot,t) \,dx
%\\& 
\label{eq:auxRelEntropy11}
= \int_{\Omega} \nabla u(\cdot,t) \cdot \nabla \widetilde w(\cdot,t) \,dx
\end{align}
and
\begin{equation}
\begin{aligned}
\label{eq:int_by_parts1}
&- \int_{\Omega} \nabla \widetilde w(\cdot,t) \cdot \nabla (w {-} \widetilde w)(\cdot,t)  \,dx
\\&~
= \int_{\p^* A(t) \cap \Omega} (w {-} \widetilde w)(\cdot,t) \jump{(\n_{\p^* A} \cdot \nabla)\widetilde w}(\cdot,t) \,\dH[d-1],
\end{aligned}
\end{equation}
as well as
\begin{align}
\label{eq:int_by_parts2}
-\int_{\Omega} \nabla \widetilde w \cdot \nabla\vartheta \,dx &=
\int_{\p^*A\cap\Omega} \vartheta \jump{(\n_{\p^* A}\cdot\nabla)\widetilde w} \,\dH[d-1],
\\ 
\label{eq:int_by_parts3}
-\int_{\Omega} (\nabla u {-} \nabla \widetilde w) \cdot \nabla\vartheta \,dx &=
\int_{\p^*A\cap\Omega} (u {-} \widetilde w) 
\jump{(\n_{\p^* A}\cdot\nabla)\widetilde w_\vartheta} \,\dH[d-1].
\end{align}
\end{lemma}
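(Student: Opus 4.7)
The plan is to derive all four identities by combining the integration-by-parts formula~\eqref{eq:IBP} with the harmonicity of $u$, $\widetilde w$, and $\widetilde w_\vartheta$ in the two phases. First, I would establish an $H^1$-extension of Lemma~\ref{lem:harmonicityRegTimes}. In the perturbative graph regime the interface $\partial^*A(t)$ stays strictly inside~$\Omega$ by~\eqref{eq:graph_representation}, and the smallness implicit in $E_{rel}$ forces $A(t) \compemb \Omega$, so $\partial A(t) = \partial^*A(t)\cap\Omega$. Density of $C^\infty_{cpt}(A(t))$ in $H^1_0(A(t))$, together with the analogous density in the exterior phase allowing test functions up to $\partial\Omega$ (which produces the Neumann condition as a natural boundary condition), upgrades Lemma~\ref{lem:harmonicityRegTimes} to
\[
\int_\Omega \nabla u(\cdot,t)\cdot \nabla\zeta \,dx = 0 \qquad\text{for every } \zeta \in H^1(\Omega) \text{ with } \mathrm{tr}_{\partial^*A(t)\cap\Omega}\zeta = 0.
\]
The same principle applies to $u - \widetilde w$, since both $u$ and $\widetilde w$ are harmonic in each phase and satisfy the homogeneous Neumann condition on~$\partial\Omega$.

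Given this, identity~\eqref{eq:auxRelEntropy11} follows by testing against $\phi := \nabla\cdot\xi + \widetilde w$: by~\eqref{eq:auxChemPotential2} the trace of $\phi$ on $\partial^*A\cap\Omega$ vanishes, and by~\eqref{eq:min_assumption6} $\xi$ is supported strictly inside~$\Omega$, so $\phi$ coincides with~$\widetilde w$ near~$\partial\Omega$ and is therefore an admissible test function; the resulting identity $\int_\Omega \nabla u\cdot\nabla\phi\,dx = 0$ is exactly~\eqref{eq:auxRelEntropy11}. For~\eqref{eq:int_by_parts1}--\eqref{eq:int_by_parts2}, I would apply~\eqref{eq:IBP} to $f = \nabla\widetilde w$; the harmonicity~\eqref{eq:auxChemPotential1} and Neumann condition~\eqref{eq:auxChemPotential3} reduce it to
\[
\int_\Omega \nabla\widetilde w\cdot\nabla\zeta\,dx = -\int_{\partial^*A(t)\cap\Omega}\jump{\n_{\partial A}\cdot\nabla\widetilde w}\,\zeta\,\dH[1]
\]
for every $\zeta \in H^1(\Omega)$, and the choices $\zeta = w - \widetilde w$ and $\zeta = \vartheta$ yield the two identities. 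For~\eqref{eq:int_by_parts3} I would combine both ideas: $\psi := \vartheta - \widetilde w_\vartheta$ has zero trace on $\partial^*A\cap\Omega$ by~\eqref{eq:auxChemPotentialWeight2}, so the extended harmonicity principle applied to $u - \widetilde w$ gives $\int_\Omega (\nabla u - \nabla\widetilde w)\cdot\nabla\psi\,dx = 0$; the analogue of the above integration-by-parts formula for $\widetilde w_\vartheta$ with test function $u - \widetilde w$ then produces the right-hand side of~\eqref{eq:int_by_parts3}.

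The hard part will be justifying the boundary integrals involving the jumps $\jump{\n_{\partial A}\cdot\nabla\widetilde w}$ and $\jump{\n_{\partial A}\cdot\nabla\widetilde w_\vartheta}$. This is where the $W^{2,p}$-regularity~\eqref{eq:regularity_graph} of the interface and the Lipschitz regularity of the Dirichlet data $-\nabla\cdot\xi$ and~$\vartheta$ enter: by one-sided elliptic regularity, $\widetilde w$ and~$\widetilde w_\vartheta$ are smooth enough on each side of $\partial^*A$ that their one-sided normal traces are classical $L^2$-functions and the pairing against $H^{1/2}$-trace test functions is well-defined. Crucially, the weak potentials $u$ and~$w$ are used only as $H^1$-test functions, never as integrands whose normal derivatives along $\partial^*A$ would need to be defined.
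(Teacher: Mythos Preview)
Your proposal is correct and the underlying content matches the paper's proof, but your packaging is cleaner in one respect. For~\eqref{eq:auxRelEntropy11} the paper does not explicitly isolate the principle ``$\int_\Omega \nabla u\cdot\nabla\zeta=0$ whenever $\mathrm{tr}_{\partial^*A}\zeta=0$''; instead it splits $\Omega$ into the two phases, introduces a cutoff $\eta$ with $\eta\equiv 1$ on $\overline{A}$ and $\eta\equiv 0$ on $\partial\Omega$, and works with the generalized normal-trace operator $T_D\colon Y^2_{div}(D)\to W^{-1/2,2}(\partial D)$ to manipulate the resulting boundary pairings until they reassemble into $\int_\Omega \nabla u\cdot\nabla\widetilde w$. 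Your observation that $\phi=\nabla\cdot\xi+\widetilde w$ has zero interfacial trace reaches the same conclusion in one line and avoids the $Y^2_{div}$ machinery altogether; this is a genuine simplification. For~\eqref{eq:int_by_parts1}--\eqref{eq:int_by_parts3} the two arguments coincide: the paper also invokes the $C^{1,\alpha}$ regularity of $\partial^*A$ from~\eqref{eq:regularity_graph} together with Schauder theory to obtain that $\widetilde w$ (and analogously $\widetilde w_\vartheta$) is $C^1$ up to the interface from each side, making the jump integrals classical, and then integrates by parts with the same cutoff $\eta$. Your final remark---that $u$ and $w$ appear only as $H^1$ test functions, never through their own normal derivatives---is exactly the point the paper's cutoff construction is encoding.
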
 

We now combine the information from the previous two results to a proof
of the representation formulas~\eqref{eq:R_dissip_combined} 
and~\eqref{eq:A_dissip_combined}.

\begin{proof}[Proof of Lemma~\ref{lem:post_processed_relative_entropy_inequality}]
The proof is split into two steps. For notational ease, we drop in the notation the time dependence
of all quantities.

\textit{Step 1: Formula for~$R_{dissip}$.} For convenience, let us restate the definition
of~$R_{dissip}$ from~\eqref{eq:R_dissip}:
\begin{align}
\label{eq:R_dissip_again}
R_{dissip}
&= - \int_{\Omega} \frac{1}{2}|\nabla u|^2 \,dx
   - \int_{\Omega} \frac{1}{2}|\nabla w|^2 \,dx
%\\&~~~ \nonumber
- \int_{\Omega} \nabla u \cdot \nabla (\nabla\cdot\xi) \,dx
\\&~~~ \nonumber
+ \int_{\p^*A \cap \Omega} (B \cdot \n_{\p^* A}) \big(w + (\nabla \cdot \xi)\big) \,\dH[d-1].
\end{align}
Inserting the boundary condition~\eqref{eq:auxChemPotential2}, we observe that
\begin{align}
\label{eq:R_dissip_aux1}
\int_{\p^*A \cap \Omega} (B \cdot \n_{\p^* A}) \big(w + (\nabla \cdot \xi)\big) \,\dH[d-1]
= \int_{\p^*A \cap \Omega} (B \cdot \n_{\p^* A}) (w - \widetilde w) \,\dH[d-1].
\end{align}
Next, by inserting the integration by parts formula~\eqref{eq:auxRelEntropy11}
and completing squares, we get
\begin{align}
\nonumber
&- \int_{\Omega} \frac{1}{2}|\nabla u|^2 \,dx
- \int_{\Omega} \frac{1}{2}|\nabla w|^2 \,dx
- \int_{\Omega} \nabla u \cdot \nabla (\nabla\cdot\xi) \,dx
\\&~ \label{eq:auxRelEntropy12}
= - \int_{\Omega} \frac{1}{2}|\nabla u {-} \nabla \widetilde w|^2 \,dx
- \int_{\Omega} \frac{1}{2}|\nabla w {-} \nabla \widetilde w|^2 \,dx
%\\&~~~ \nonumber
- \int_{\Omega} \nabla \widetilde w \cdot \nabla (w {-} \widetilde w)  \,dx.
\end{align}
Hence, \eqref{eq:R_dissip_combined} follows from~\eqref{eq:R_dissip_again}--\eqref{eq:auxRelEntropy12}
and the integration by parts formula~\eqref{eq:int_by_parts1}.

\textit{Step 2: Formula for~$U_{dissip}$.} Recall first from~\eqref{eq:A_dissip} that
\begin{align}
\label{eq:A_dissip_again}
U_{dissip}
&= - \int_{\Omega} \nabla u \cdot \nabla (\nabla\cdot\xi)\,dx
+ \int_{\p^*A\cap\Omega} \vartheta (\n_{\p^* A}\cdot B)\,\dH[d-1].
\end{align}
To upgrade this to~\eqref{eq:A_dissip_combined}, we simply
start by adding zero in the form of
\begin{align*}
&- \int_{\Omega} \nabla u \cdot \nabla\vartheta \,dx
+ \int_{\p^*A\cap\Omega} \vartheta \n_{\p^* A}\cdot B \,\dH[d-1]
\\&~
= - \int_{\Omega} (\nabla u {-} \nabla \widetilde w) \cdot \nabla\vartheta \,dx
%\\&~~~
- \int_{\Omega} \nabla \widetilde w \cdot \nabla\vartheta \,dx
+ \int_{\p^*A\cap\Omega} \vartheta \n_{\p^* A}\cdot B \,\dH[d-1].
\end{align*}
Hence, \eqref{eq:A_dissip_combined} follows from the
integration by parts formulas~\eqref{eq:int_by_parts2} and~\eqref{eq:int_by_parts3}.
\end{proof}

\begin{proof}[Proof of Lemma~\ref{lem:harmonicityRegTimes}]
Let $\zeta\in C^\infty_{cpt}(\overline{\Omega})$ 
be a nonnegative test function supported in $\overline{\Omega}\setminus\overline{A(t)}$. 
We then have by the nonnegativity of~$\zeta$, the property $\zeta \equiv 0$ 
in~$A(t)$, and the weak formulation of the Mullins-Sekerka equation~\eqref{WeakFormMullinsSekerka}
\begin{align*}
0 &\leq \int_{A(\tilde t)} \zeta \,dx
%\\&
=\int_{A(\tilde t)} \zeta \,dx
-\int_{A(t)} \zeta \,dx
%\\& \nonumber \hspace*{-0.7ex}
\stackrel{\eqref{WeakFormMullinsSekerka}}{=}
- \int_{t}^{\tilde t} \int_{\Omega} \nabla u \cdot \nabla \zeta \,dx ds.
\end{align*}
Dividing by $|\tilde t-t|$ and adding zero, we obtain
\begin{align*}
0&\leq
- \sign(\tilde t - t) \int_{\Omega \setminus A(t)} \nabla u(\cdot,t) \cdot \nabla \zeta \,dx
\\&~~~
- \frac{1}{|\tilde t-t|} \int_t^{\tilde t} \int_{\Omega} 
(\nabla u(\cdot,s)-\nabla u(\cdot,t))\cdot \nabla \zeta \,dx ds.
\end{align*}
Passing to the limit $\tilde t\searrow t$ respectively $\tilde t \nearrow t$, 
we deduce by~\eqref{PropertyLebesguePoint}
\begin{align*}
- \int_{\Omega \setminus A(t)} \nabla u(\cdot,t) \cdot \nabla \zeta \,dx \geq 0
\end{align*}
respectively
\begin{align*}
- \int_{\Omega \setminus A(t)} \nabla u(\cdot,t) \cdot \nabla \zeta \,dx \leq 0.
\end{align*}
This implies~\eqref{eq:harmonicityExterior}. 
Analogously, one shows~\eqref{eq:harmonicityInterior} 
which then implies our lemma.
\end{proof}

\begin{proof}[Proof of Lemma~\ref{lem:integration_by_parts}]
We again drop the time dependence of all objects in the notation
and proceed in three steps.

\textit{Step 1: Proof of~\eqref{eq:auxRelEntropy11}.} We note that due to~\eqref{eq:graph_representation}
it holds $\p^*A \cap \Omega = \p^*A = \p A \subset\subset \Omega$.
Furthermore, Lemma~\ref{lem:harmonicityRegTimes} tells us that
$\nabla \cdot \nabla u$ exists in the sense of square-integrable weak derivatives
within the open sets~$A$ and~$\Omega\setminus\overline{A}$, respectively,
and that in fact 
\begin{align}
\label{eq:harmonicity}
\nabla \cdot \nabla u = 0
\quad \text{throughout } A \text{ and } \Omega\setminus\overline{A}, \text{ respectively.}
\end{align}
In other words, $\nabla u \in Y^2_{div}(D) := \{f \in L^2(D;\Rd)\colon \nabla \cdot f \in L^2(D;\Rd)\}$
for $D \in \{A,\Omega\setminus\overline{A}\}$. It is a standard result 
(for instance in the context of mathematical fluid mechanics in connection with
the Helmholtz decomposition; see, e.g., Simader and Sohr~\cite[Theorem~5.3]{Simader1992})
that there exists a continuous linear operator $T_{D}\colon Y^2_{div}(D) \to W^{-\frac{1}{2},2}(\p D)$
such that for all $f\in Y^2_{div}(D)$ and all $\zeta \in H^1(D)$
\begin{align}
\label{eq:generalizedGaussGreen}
\int_D \zeta (\nabla \cdot f) \,dx = - \int_D \nabla \zeta \cdot f \,dx
- \langle T_D(f), \zeta \rangle,
\end{align}
i.e., one may interpret $T_Df$ as $(\n_{\p D}\cdot f)|_{\p D}$ in a weak sense.
Denoting now by $\eta\colon\overline{\Omega} \to [0,1]$ a $C^1$-cutoff
such that $\eta \equiv 1$ on~$\overline{A}$ and $\eta \equiv 0$ on~$\p\Omega$,
we thus obtain from~\eqref{eq:harmonicity}, \eqref{eq:generalizedGaussGreen}
and using that~$\xi$ is compactly supported within~$\Omega$
\begin{align}
\label{eq:auxRelEntropy7}
&- \int_{\Omega} \nabla u \cdot \nabla (\nabla\cdot\xi) \,dx
\\& \nonumber
= - \int_{A} \nabla u \cdot \nabla (\nabla\cdot\xi) \,dx
-  \int_{\Omega \setminus \overline{A}} \nabla u \cdot 
	 \nabla (\eta + (1{-}\eta))(\nabla\cdot\xi) \,dx
\\& \nonumber
= \langle T_{A}(\nabla u), (\nabla\cdot\xi) \rangle
+ \langle T_{\Omega\setminus\overline{A}}(\nabla u), \eta (\nabla\cdot\xi) \rangle
+ \langle T_{\Omega\setminus\overline{A}}(\nabla u), (1{-}\eta) (\nabla\cdot\xi) \rangle.
\end{align}
Since~$(1{-}\eta)\equiv 0$ on~$\p A$ by its choice and $\nabla\cdot\xi \equiv 0$
on~$\p\Omega$, it holds~$(1{-}\eta)(\nabla\cdot\xi) \in H^1_0(\Omega\setminus\overline{D})$
and therefore
\begin{align}
\label{eq:auxRelEntropy8}
\langle T_{\Omega\setminus\overline{A}}(\nabla u), (1{-}\eta) (\nabla\cdot\xi) \rangle = 0.
\end{align}
Moreover, due to the boundary condition~\eqref{eq:auxChemPotential2}
and~$\eta \equiv 0$ on~$\p\Omega$ by its choice,
it holds $\nabla\cdot\xi = -\widetilde w \in W^{\frac{1}{2},2}(\p A)$ as well as
$\eta \nabla\cdot\xi = - \eta\widetilde w \in W^{\frac{1}{2},2}(\p(\Omega\setminus\overline{A}))$
and therefore
\begin{align*}
&\langle T_{A}(\nabla u), \nabla\cdot\xi \rangle
+ \langle T_{\Omega\setminus\overline{A}}(\nabla u), \eta (\nabla\cdot\xi) \rangle
\\&
= - \langle T_{A}(\nabla u), \widetilde w \rangle
- \langle T_{\Omega\setminus\overline{A}}(\nabla u), \eta\widetilde w \rangle
\\&
=  - \langle T_{A}(\nabla u), \widetilde w \rangle 
- \langle T_{\Omega\setminus\overline{A}}(\nabla u), \widetilde w \rangle
+ \langle T_{\Omega\setminus\overline{A}}(\nabla u), (1{-}\eta)\widetilde w \rangle.
\end{align*}
Based on~\eqref{eq:auxChemPotential1} and~\eqref{eq:generalizedGaussGreen},
the previous display admits the upgrade
\begin{align}
\nonumber
&\langle T_{A}(\nabla u), \nabla\cdot\xi \rangle
+ \langle T_{\Omega\setminus\overline{A}}(\nabla u), \eta (\nabla\cdot\xi) \rangle
\\& \nonumber
= \int_{A} \nabla u \cdot \nabla \widetilde w \,dx 
+ \int_{\Omega\setminus\overline{A}} \nabla u \cdot \nabla \widetilde w \,dx
+ \langle T_{\Omega\setminus\overline{A}}(\nabla u), (1{-}\eta)\widetilde w \rangle
\\& \label{eq:auxRelEntropy9}
= \int_{\Omega} \nabla u \cdot \nabla \widetilde w \,dx
+ \langle T_{\Omega\setminus\overline{A}}(\nabla u), (1{-}\eta)\widetilde w \rangle.
\end{align}
It remains to identify $\langle T_{\Omega\setminus\overline{A}}(\nabla u), (1{-}\eta)\widetilde w \rangle$.
However, comparing~\eqref{eq:generalizedGaussGreen} applied to the data $(f,\zeta) 
= (\nabla u,(1{-}\eta)\widetilde w),
\,\nabla\cdot f = 0,\,D=\Omega\setminus\overline{A}$, with~\eqref{eq:harmonicityExterior}
applied to the admissible data~$\zeta=(1{-}\eta)\widetilde w$ yields 
(recall also that $\eta\equiv 0$ along~$\p\Omega$)
\begin{align}
\label{eq:auxRelEntropy10}
\langle T_{\Omega\setminus\overline{A}}(\nabla u), (1{-}\eta)\widetilde w \rangle
= 0.
\end{align}
The identities~\eqref{eq:auxRelEntropy7}--\eqref{eq:auxRelEntropy10} together finally imply
the claim~\eqref{eq:auxRelEntropy11}.

\textit{Step 2: Proof of~\eqref{eq:int_by_parts1}.}
Denote again by $\eta\colon\overline{\Omega} \to [0,1]$ a $C^1$-cutoff
such that $\eta \equiv 1$ on~$\overline{A}$ and $\eta \equiv 0$ on~$\p\Omega$.
Analogous to the above arguments, we then obtain on one side
\begin{align*}
&- \int_{\Omega\setminus\overline{A}} \nabla \widetilde w 
\cdot \nabla \big((1{-}\eta)(w - \widetilde w)\big)  \,dx
= \langle T_{\Omega\setminus\overline{A}}(\nabla \widetilde w), (1{-}\eta)(w - \widetilde w) \rangle
= 0.
\end{align*}
On the other side, we may exploit that the interface~$\p A = \p^* A = \p^*A \cap \Omega$ is~$C^{1,\alpha}$
due to assumption~\eqref{eq:regularity_graph}. Since
the boundary data in~\eqref{eq:auxChemPotential2} is smooth, we get by
standard Schauder theory that the auxiliary chemical potential is $C^1$
up to the interface~$\p A$ from both sides, so that
\begin{align*}
&- \int_{A} \nabla \widetilde w \cdot \nabla (w - \widetilde w)  \,dx
- \int_{\Omega\setminus\overline{A}} \nabla \widetilde w 
\cdot \nabla \big(\eta(w - \widetilde w)\big)  \,dx
\\&
= - \int_{\Omega} \nabla \widetilde w \cdot \nabla \big(\eta(w - \widetilde w)\big)  \,dx
= \int_{\p^* A \cap \Omega} \jump{(\n_{\p^* A} \cdot \nabla)\widetilde w} (w - \widetilde w) \,\dH[d-1].
\end{align*}
The previous two displays obviously imply~\eqref{eq:int_by_parts1}.

\textit{Step 3: Proof of~\eqref{eq:int_by_parts2} and~\eqref{eq:int_by_parts3}.}
These two follow from analogous arguments.
\end{proof}

%%%%%%%%%%%%%%%%%%%%%%%%%%%%%%%%%%%%%%%%%%%%%%%%%%%%%%%%%%%%%%%%%%%%%%%%%%%%%%%%%%%%%%%%%%%%%%%%
\subsection{Proof of Lemma~\ref{lem:stability_estimate_local_terms}: Stability estimate for local terms}
Thanks to~\eqref{eq:perturbative_regime1} and~\eqref{eq:graph_representation}, 
we know $\partial^*A\cap\Omega \subset B_{\ell/4}(\p\A)$ as well as
$A\Delta\A \subset B_{\ell/4}(\p\A)$.
Hence, since $\frac{1}{2}|n{-}\xi|^2=1-n\cdot\xi$ due to the first item of~\eqref{eq:min_assumption3},
we immediately obtain the estimates~\eqref{eq:estimate_RnablaB} and~\eqref{eq:estimate_AnablaB};
recall for this also the definitions~\eqref{eq:rel_entropy} and~\eqref{eq:bulk_error} 
of $E_{rel}$ and $E_{vol}$, respectively. Furthermore, the identity~\eqref{eq:estimate_Rdtxi}
is a consequence of the following identities throughout~$B_{\ell/4}(\p\A)$:
\begin{align*}
\xi \cdot (\partial_t\xi + (B\cdot\nabla\xi)) &= 0, 
\\
\partial_t \xi + (B\cdot\nabla)\xi + (\nabla B)^\mathsf{T}\xi &= 0.
\end{align*}
The first is true simply because of~$|\xi| \equiv 1$ within~$B_{\ell/2}(\p\A)$, see
again the first item of~\eqref{eq:min_assumption3}. The second also
follows from the latter by taking the spatial gradient of 
$\partial_t s_{\p\A} + (B\cdot\nabla)s_{\p\A} = 0$, which itself
is true within~$B_{\ell/2}(\p\A)$ thanks to~\eqref{eq:min_assumption9}.
Note also in this context that the transport equation satisfied by the signed distance 
also directly implies~\eqref{eq:estimate_Adtvartheta}
due to~\eqref{eq:min_assumption4}. 

It remains to prove~\eqref{eq:estimate_RvarifoldBV}. 
To this end, we first obtain
using~\eqref{def:multiplicity} and~\eqref{eq:repRelEntropy2}
\begin{align*}
&-\int_{\p^*A \cap \Omega} \nabla \cdot B \,\dH[d-1]
\\&~
= -\int_{\Omega} \varrho \nabla \cdot B \,d|\mu|_{\mathbb{S}^{d-1}}
\leq -\int_{\overline{\Omega}{\times}\mathbb{S}^{d-1}} \nabla \cdot B \,d\mu
+ \|\nabla\cdot B\|_{L^\infty(B_{\ell/2}(\p\A))} E_{rel}[A,\mu|\A].
\end{align*}
Adding zero several times, applying the compatibility condition
from~\cite[Definition~3, item~ii)]{Hensel2022}
with test function $\eta = (\nabla B)^\mathsf{T}\xi$, and estimating
based on the properties $(\xi\cdot\nabla)B=0$ and
$\frac{1}{2}|\xi-\pvec|^2\leq1-\xi\cdot \pvec,\ \pvec\in\mathbb{S}^{d-1}$,
valid throughout $B_{\ell/2}(\p\A)$,
cf.\  \eqref{eq:min_assumption3} and~\eqref{eq:min_assumption9}, 
furthermore entails
\begin{align*}
\int_{\p^*A \cap \Omega} \n_{\p^*A} \otimes \n_{\p^*A} : \nabla B \, \dH[d-1]
&= \int_{\p^*A \cap \Omega} (\n_{\p^*A} {-} \xi) \otimes \n_{\p^*A} : \nabla B \,\dH[d-1]
\\&~~~
+ \int_{\overline{\Omega}{\times}\mathbb{S}^{d-1}} \xi \otimes \pvec : \nabla B \,d\mu
\\&
= \int_{\p^*A \cap \Omega} (\n_{\p^*A} {-} \xi) \otimes (\n_{\p^*A} {-} \xi) : \nabla B \,\dH[d-1]
\\&~~~
- \int_{\overline{\Omega}{\times}\mathbb{S}^{d-1}} 
(\pvec {-} \xi) \otimes (\pvec {-} \xi) : \nabla B \,d\mu
\\&~~~
+ \int_{\overline{\Omega}{\times}\mathbb{S}^{d-1}} 
\pvec \otimes \pvec : \nabla B \,d\mu
\\&
\leq  \int_{\overline{\Omega}{\times}\mathbb{S}^{d-1}} 
\pvec \otimes \pvec : \nabla B \,d\mu
\\&~~~
+ 4\|\nabla B{+}(\nabla B)^\mathsf{T}\|_{L^\infty(B_{\ell/2}(\p\A))}E_{rel}[A,\mu|\A].
\end{align*}
This concludes the proof.
\qed

%%%%%%%%%%%%%%%%%%%%%%%%%%%%%%%%%%%%%%%%%%%%%%%%%%%%%%%%%%%%%%%%%%%%%%%%%%%%%%%%%%%%%%%%%%%%%%%%
\subsection{Proof of Proposition~\ref{prop:stability_estimate_nonlocal_terms}: 
Stability estimates for non-local terms}
\label{subsec:estimate_dissipative_terms}
For this whole subsection, let the assumptions 
and notation of Proposition~\ref{prop:stability_estimate_nonlocal_terms} be in place.
As before, let us drop the time dependence of all quantities in the notation.
In order to streamline the proof of Proposition~\ref{prop:stability_estimate_nonlocal_terms},
we start by collecting an array of auxiliary constructions and results.
Proofs for these are deferred until after we showed how the 
estimate~\eqref{eq:estimate_R_dissip} can be inferred from these. We
finally conclude this subsection with the missing proofs for 
the estimates~\eqref{eq:estimate_A_dissip1} and~\eqref{eq:estimate_A_dissip2}.

Throughout this subsection, we make use of the following
convention: for a given point $y \in \p\A$, we always denote by $(\ta^{i}_{\p\A})_{i=1,\ldots,d{-}1}$
an orthonormal basis for $\mathrm{Tan}_{y}\p\A$ such that each $\ta^{i}_{\p\A}$
represents a principal curvature direction at~$y$ with corresponding principal
curvature~$\kappa^i_{\p\A}$. 
To avoid cumbersome notation in the following, let us also define
\begin{align}
\label{eq:def_normal}
\bar{\n}_{\p\A}(x) &:= \n_{\p\A}(P_{\p\A}(x)),
\\
\bar{\ta}^{i}_{\p\A}(x) &:= \ta^{i}_{\p\A}(P_{\p\A}(x)),
\\
\bar{\kappa}^{i}_{\p\A}(x) &:= (\kappa^{i}_{\p\A})(P_{\p\A}(x)),
\\
\bar{H}_{\p\A}(x) &:= \sum_{i=1}^{d-1} \bar{\kappa}^{i}_{\p\A}(x),
\\
\bar{h}(x) &:= h(P_{\p\A}(x))
\end{align}
for $x \in B_{\frac{\ell}{2}}(\p\A)$.
In terms of this data, we then obtain the following useful formulas.

\begin{lemma}[Some elementary identities]
\label{lem:geometric_identities0}
It holds
\begin{align}
\label{eq:NablaProjection}
\nabla P_{\p\A}(x) = \sum_{i=1}^{d-1} \frac{1}{1 {-} s_{\p\A}(x) \bar{\kappa}_i(x)}
\bar{\ta}^{i}_{\p\A}(x) \otimes \bar{\ta}^{i}_{\p\A}(x).
\end{align}
Furthermore, it holds
\begin{align}
\label{eq:xi}
\xi(x) = \bar{\n}_{\p\A}(x) = \nabla s_{\p\A}(x), \quad x \in B_{\frac{\ell}{2}}(\p\A),
\end{align}
as well as
\begin{align}
\label{eq:grad_xi}
\nabla \xi(x) = -\sum_{i=1}^{d-1} \frac{\bar{\kappa}^i_{\p\A}(x)}{1 {-} s_{\p\A}(x) \bar{\kappa}^i_{\p\A}(x)}
\bar{\ta}^{i}_{\p\A}(x) \otimes \bar{\ta}^{i}_{\p\A}(x),
\quad x \in B_{\frac{\ell}{2}}(\p\A),
\end{align}
and thus
\begin{equation}
\begin{aligned}
\label{eq:div_xi}
\nabla\cdot\xi(x) 
&= - \bar{H}_{\p\A} - \sum_{i=1}^{d-1} \frac{\big(\bar{\kappa}^i_{\p\A}\big)^2(x)}{1 {-} s_{\p\A}(x) \bar{\kappa}^i_{\p\A}(x)} s_{\p\A}(x)
%\\
%&= - \bar{H}_{\p\A}
%- \Big(\sum_{i=1}^{d-1} \big(\bar{\kappa}^{i}_{\p\A}\big)^2(x)\Big) s_{\p\A}(x)
%\\&~~~
%- \Big(\sum_{i=1}^{d-1} \frac{\big(\bar{\kappa}^i_{\p\A}\big)^3(x)}{1 {-} s_{\p\A}(x) \bar{\kappa}^i_{\p\A}(x)}\Big) s_{\p\A}^2(x),
\quad x \in B_{\frac{\ell}{2}}(\p\A).
\end{aligned}
\end{equation}
Finally, for all $f \in L^1(B_{\frac{\ell}{2}}(\p\A);dx)$ it holds
\begin{align}
\label{eq:CoareaFormulaTubularNeighborhoodDiffeo}
\int_{B_{\frac{\ell}{2}}(\p\A)} f \,dx
= \int_{\p\A} \int_{-\frac{\ell}{2}}^{\frac{\ell}{2}}
f\big(y{+}s\n_{\p\A}(y)\big)
\prod_{i=1}^{d-1} \big(1 {-} s\kappa^i_{\p\A}(y)\big)
\,dsd\mathcal{H}^{d-1}.
\end{align}
\end{lemma}

In order to pull off the strategy for the proof of~\eqref{eq:estimate_R_dissip}
as mentioned before the statement of 
Proposition~\ref{prop:stability_estimate_nonlocal_terms}, we 
next introduce a $C^1$~diffeomorphism $\Psi^h\colon \Omega \to \Omega$ 
with the property
\begin{align}
\label{eq:mapping_property_diffeo}
\Psi^h(A) = \A.
\end{align}
This can be done as follows. Let $\bar\zeta \in C^\infty_{cpt}(\mathbb{R};[0,1])$
such that $\supp\bar\zeta \subset [-1/2,1/2]$, $\bar\zeta \equiv 1$
on $[-1/4,1/4]$ and $|\bar\zeta'| \leq 8$. Based on this auxiliary cutoff, define a map
\begin{align}
\label{eq:def_plateau_cutoff}
\zeta\colon\Rd \to [0,1], \quad 
x \mapsto \bar\zeta\Big(\frac{s_{\p\A}(x)}{\ell}\Big).
\end{align}
We may then define the desired map~$\Psi^h$ by means of
\begin{align}
\label{eq:def_diffeo}
\Psi^h(x) := x - \zeta(x)\bar{h}(x)\bar{\n}_{\p\A}(x).
\end{align}
We collect in the upcoming result some basic facts about~$\Psi^h$.

\begin{lemma}[Some further useful facts]
\label{lem:geometric_identities}
We have throughout~$\Omega$
\begin{equation}
\begin{aligned}
\label{eq:grad_diffeo}
\nabla\Psi^h = \mathrm{Id} 
&- \big(\bar{\n}_{\p\A}\cdot\nabla\zeta\big) \bar{h}
\,\bar{\n}_{\p\A}\otimes\bar{\n}_{\p\A}
\\&
+ \zeta \sum_{i=1}^{d-1} \frac{\bar{\kappa}^{i}_{\p\A}}{1{-}s_{\p\A}\bar{\kappa}^{i}_{\p\A}} \bar{h} \,
\bar{\ta}^{i}_{\p\A} \otimes \bar{\ta}^{i}_{\p\A},
\\&
- \zeta \sum_{i=1}^{d-1} \frac{\big((\bar{\ta}^{i}_{\p\A} \cdot \nabla_{\p\A})h\big)
\circ P_{\p\A}}{1{-}s_{\p\A}\bar{\kappa}^{i}_{\p\A}}
\bar{\n}_{\p\A} \otimes \bar{\ta}^{i}_{\p\A},
\end{aligned}
\end{equation}
and thus throughout~$\Omega$
\begin{align}
\label{eq:div_diffeo}
\det \nabla\Psi^h &= \big(1 - \big(\bar{\n}_{\p\A}\cdot\nabla\zeta\big) \bar{h}\big)
\prod_{i=1}^{d-1}\Big(1 + \zeta\frac{\bar{\kappa}^{i}_{\p\A}}{1{-}s_{\p\A}\kappa^{i}_{\p\A}}\bar{h}\Big).
\end{align}
In particular, $\Psi^h\colon\Omega\to\Omega$ is a 
$C^{1,1{-}\frac{d{-}1}{4}}$~diffeomorphism and
\begin{align}
\label{eq:diffeo_short}
\Psi^h(x) &= x - \bar{h}(x)\bar{\n}_{\p\A}(x), && x \in B_{2\|h\|_{L^\infty(\p\A)}}(\p\A),
\\
\label{eq:inv_diffeo_short}
(\Psi^h)^{-1}(x) &= x + \bar{h}(x)\bar{\n}_{\p\A}(x), && x \in B_{2\|h\|_{L^\infty(\p\A)}}(\p\A).
\end{align}
Furthermore, for all $f \in L^1(\p^*A;d\mathcal{H}^{d-1})$ it holds
\begin{equation}
\begin{aligned}
\label{eq:areaFormulaSurfaceIntegral}
&\int_{\p^*A} f \,d\mathcal{H}^{d-1}
\\&
= \int_{\p\A} f\circ\big.(\Psi^h)^{-1}\big|_{\p\A}
\sqrt{1 {+} \sum_{i=1}^{d-1} \Big(\frac{(\ta^{i}_{\p\A}\cdot\nabla_{\p\A})h}{1 {-} h\kappa^{i}_{\p\A}}\Big)^2}
\prod_{i=1}^{d-1} \big(1 {-} h\kappa^{i}_{\p\A}\big) \,d\mathcal{H}^{d-1}.
\end{aligned}
\end{equation}
Finally, 
\begin{align}
\label{eq:normalInterfaceWeakSolution}
\n_{\p^*A}\circ\big.(\Psi^h)^{-1}\big|_{\p\A}
= \frac{1}{\sqrt{1 {+} \sum_{i=1}^{d-1} \big(\frac{(\ta^{i}_{\p\A}\cdot\nabla_{\p\A})h}{1 {-} h\kappa^{i}_{\p\A}}\big)^2}}
\Big(\n_{\p\A} - \sum_{i=1}^{d-1} \frac{(\ta^{i}_{\p\A}\cdot\nabla_{\p\A})h}{1 {-} h\kappa^{i}_{\p\A}} \,\ta^{i}_{\p\A}\Big)
\end{align}
and for all~$\delta > 0$ one may choose $C \gg_{\delta} 1$
from~\eqref{eq:perturbative_regime1_again} such that
\begin{equation}
\begin{aligned}
\label{eq:meanCurvatureInterfaceWeakSolution}
&\bigg|(\mathrm{tr}_{\p^*A} w) \circ\big.(\Psi^h)^{-1}\big|_{\p\A}
-\bigg(H_{\p\A} + \Big(\sum_{i=1}^{d-1} (\kappa^{i}_{\p\A})^2\Big) h + \Delta_{\p\A} h\bigg)\bigg|
\\&~~~~~~~~~~~~~~~~
\leq \delta\max\big\{|h|,|\nabla_{\p\A}h|,|\mathrm{Hess}_{\p\A}h|\big\}.
\end{aligned}
\end{equation}
\end{lemma}

The next ingredient is concerned with a representation of the
error functionals in terms of the graph function.

\begin{lemma}[Error control in perturbative setting]
\label{lem:error_control_perturbative_regime}
For any $\delta \in (0,1)$ one may choose $C \gg_{\delta} 1$
from~\eqref{eq:perturbative_regime1_again} such that
\begin{align}
\label{eq:coercivity_Erel}
(1 {-} \delta) \int_{\p\A} \frac{1}{2} |\nabla_{\p\A}h|^2 \,\dH[d-1]
&\leq E_{rel}[A,\mu|\A] 
\leq (1 {+} \delta) \int_{\p\A} \frac{1}{2} |\nabla_{\p\A}h|^2 \,\dH[d-1],
\\
\label{eq:coercivity_Ebulk}
(1 {-} \delta) \int_{\p\A} \frac{1}{2} \Big(\frac{h}{\ell}\Big)^2 \,\dH[d-1]
&\leq E_{vol}[A|\A] 
\leq (1 {+} \delta) \int_{\p\A} \frac{1}{2} \Big(\frac{h}{\ell}\Big)^2 \,\dH[d-1]
\end{align}
and
\begin{align}
\label{eq:comparable_length}
(1 {-} \delta) \mathcal{H}^{d-1}(\p\A) \leq \mathcal{H}^{d-1}(\p^*A\cap\Omega)
\leq (1 {+} \delta) \mathcal{H}^{d-1}(\p\A).
\end{align}
\end{lemma}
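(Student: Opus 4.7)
My plan is to rewrite each error quantity in terms of the graph function $h$ using the parametrization $\Phi\colon\p\A\to\p^*A\cap\Omega$, $y\mapsto y + h(y)\n_{\p\A}(y)$ provided by~\eqref{eq:graph_representation}, and then Taylor-expand in the smallness~\eqref{eq:perturbative_regime1_again}. Since by~\eqref{eq:graph_representation} the graph lies compactly in a tubular neighborhood of $\p\A$ within $\Omega$, and by~\eqref{eq:representation_varifold} the varifold is the canonical one on $\p^*A$ with unit multiplicity, the first two terms of~\eqref{eq:repRelEntropy2} vanish and one has $E[\mu_t] = \mathcal{H}^1(\p^*A\cap\Omega)$ together with $E_{rel}[A,\mu|\A] = \int_{\p^*A\cap\Omega}(1 - \n_{\p^*A}\cdot\xi)\,\dH[1]$. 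Because $\|h\|_{L^\infty}/\ell \leq 1/(16C) < 1/2$, the graph sits in the plateau region of the cutoff $\bar\eta$, so $\xi\circ\Phi = \n_{\p\A}$ on $\p\A$.

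Parametrizing each simply connected component of $\p\A$ by arclength $\gamma$ and using the planar Frenet identity $\p_s\n_{\p\A} = -H_{\p\A}\,\ta$, one computes
\begin{equation*}
\p_s\Phi(\gamma(s)) = (1 - hH_{\p\A})\,\ta + (\nabla^{tan}h)\,\n_{\p\A},
\end{equation*}
so the tangential Jacobian of $\Phi$ is $J := \sqrt{(1{-}hH_{\p\A})^2 + |\nabla^{tan}h|^2}$, and a $\pi/2$ rotation yields $\n_{\p^*A}\circ\Phi = J^{-1}\bigl((1{-}hH_{\p\A})\n_{\p\A} - (\nabla^{tan}h)\,\ta\bigr)$. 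In particular $(\n_{\p^*A}\cdot\xi)\circ\Phi\cdot J = 1 - hH_{\p\A}$, and the area formula gives
\begin{equation*}
E_{rel}[A,\mu|\A] = \int_{\p\A}\frac{|\nabla^{tan}h|^2}{J + (1{-}hH_{\p\A})}\,\dH[1], \qquad \mathcal{H}^1(\p^*A\cap\Omega) = \int_{\p\A} J\,\dH[1].
\end{equation*}
For the bulk error, switching to tubular coordinates $(y,s) \in \p\A\times(-\ell,\ell)$ (with Jacobian $1 - sH_{\p\A}(y)$), the symmetric difference $A\Delta\A$ identifies up to null sets with $\{\mathrm{sgn}(h(y))\,s \in (0,|h(y)|)\}$, which lies in $|s/\ell| < 1/2$ where $\vartheta = -s/\ell^2$ by~\eqref{eq:min_assumption4}; hence
\begin{equation*}
E_{vol}[A|\A] = \int_{\p\A}\int_0^{|h(y)|}\frac{s}{\ell^2}\bigl(1 - \mathrm{sgn}(h(y))\,sH_{\p\A}(y)\bigr)\,ds\,\dH[1].
\end{equation*}

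To upgrade these exact representations to the two-sided estimates~\eqref{eq:coercivity_Erel}--\eqref{eq:comparable_length}, I would invoke the a~priori curvature bound $\|H_{\p\A}\|_{L^\infty}\leq C_0(\A)/\ell$ that is inherited from the admissibility of $\ell$ as a tubular neighborhood radius for the smoothly evolving phase $\A$. Combined with~\eqref{eq:perturbative_regime1_again}, this gives $|hH_{\p\A}|\leq C_0/(16C)$ and $|\nabla^{tan}h|\leq 1/(16C)$ pointwise. Choosing $C = C(\delta,\A)$ sufficiently large, we obtain $J = 1 + O(1/C)$ and $J + (1{-}hH_{\p\A}) = 2 + O(1/C)$ uniformly, together with $O(s/\ell) = O(1/C)$ in the inner integral defining $E_{vol}$; absorbing the remainders into $\delta$ yields~\eqref{eq:coercivity_Erel}, \eqref{eq:coercivity_Ebulk}, and~\eqref{eq:comparable_length} simultaneously. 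The main obstacle is that the smallness threshold on $1/C$ must dominate both the intrinsic size of $h,\nabla^{tan}h$ and the extrinsic curvature of $\A$ (which scales like $1/\ell$), uniformly on compact time subintervals of $[0,T_*)$. Since $\A$ is smoothly evolving and $\ell \in C^1_{loc}([0,T_*);(0,1))$, the constant $C_0(\A)$ depends only on the local $C^2$ norm of $\p\A$, so a single $C\gg_\delta 1$ chosen locally uniformly in $t$ handles all three bounds, consistent with the locally uniform choice of $M$ asserted in Proposition~\ref{prop:perturbative_graph_regime}.
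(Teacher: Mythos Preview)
Your proof is correct and follows essentially the same approach as the paper: parametrize $\p^*A$ via the graph map $\Phi$, compute the tangential Jacobian $J=\sqrt{(1-hH_{\p\A})^2+|\nabla^{tan}h|^2}$ and the normal $\n_{\p^*A}\circ\Phi$, switch to tubular coordinates for $E_{vol}$, and Taylor-expand using~\eqref{eq:perturbative_regime1_again} together with $|H_{\p\A}|\leq 1/\ell$. Your rationalized form $E_{rel}=\int_{\p\A}|\nabla^{tan}h|^2/(J+(1-hH_{\p\A}))\,\dH[1]$ is equivalent to the paper's $\int_{\p\A}(J-(1-hH_{\p\A}))\,\dH[1]$, and the curvature bound is simply $|H_{\p\A}|\leq 1/\ell$ from the tubular neighborhood width, so no separate constant $C_0(\A)$ is needed.
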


In the following result, we record the PDEs satisfied by the
chemical potentials after pulling back the domain~$A$ to~$\A$. 

\begin{lemma}[PDEs satisfied by transformed chemical potentials]
\label{lem:transformed_chemical_potentials}
For arbitrary $v \in H^1(\Omega)$, we define $v_h := v \circ (\Psi^h)^{-1}$. 
We also define the uniformly elliptic and bounded
coefficient field $a^h := \frac{1}{|\det \Psi^h|}(\nabla\Psi^h)^\mathsf{T}\nabla\Psi^h$.
Then, $(w - \widetilde{w})_h \in H^1(\Omega)$ satisfies
\begin{align}
\label{eq:transformedPDE1}
\Delta (w - \widetilde{w})_h &= 
\nabla\cdot\big((\mathrm{Id} - a^h)\nabla(w - \widetilde{w})_h\big)
&& \text{in } \Omega \setminus \p\A,
\\
\mathrm{tr}_{\p\A} (w - \widetilde{w})_h
&= \big(\mathrm{tr}_{\p^* A} (w) + \nabla\cdot\xi\big)\circ (\Psi^h)^{-1}
&& \text{on } \p\A,
\\
(\n_{\p\Omega}\cdot\nabla)(w - \widetilde{w})_h &= 0 
&& \text{on } \p\Omega.
\end{align}
Writing $\bar u \in H^1(\Omega)$ for the chemical potential associated
with the smoothly evolving phase~$\A$, we also get for $\bar u - \widetilde w_h \in H^1(\Omega)$
\begin{align}
\label{eq:transformedPDE3}
\Delta (\bar u - \widetilde w_h) &= 
\nabla\cdot\big((a^h - \mathrm{Id})\nabla \widetilde{w}_h\big)
&& \text{in } \Omega \setminus \p\A,
\\
\mathrm{tr}_{\p\A} (\bar u - \widetilde w_h)
&= \big(H_{\p\A} + (\nabla\cdot\xi)\circ (\Psi^h)^{-1}\big)
&& \text{on } \p\A,
\\
(\n_{\p\Omega}\cdot\nabla)(\bar u - \widetilde w_h) &= 0 
&& \text{on } \p\Omega.
\end{align}
\end{lemma}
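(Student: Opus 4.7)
The proof is a direct Hanzawa-type pullback calculation, whose only substantive input is that both potentials $w(\cdot,t)$ and $\widetilde w(\cdot,t)$ are harmonic on $\Omega\setminus\p^*A(t)$. Harmonicity of $\widetilde w$ comes for free from its defining problem \eqref{eq:auxChemPotential1}. For $w$, it is part of the definition of a De~Giorgi type varifold solution via \eqref{eq:harmonicity2}. For the chemical potential $u$, Lemma~\ref{lem:harmonicityRegTimes} gives harmonicity inside $A(t)$ and inside $\Omega\setminus\overline{A(t)}$, and by construction the strong-solution potential $\bar u$ is harmonic in $\Omega\setminus\p\A$ with $\mathrm{tr}_{\p\A}\bar u = H_{\p\A}$ and $(\n_{\p\Omega}\cdot\nabla)\bar u=0$.

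\emph{Step 1 (pullback of the bulk PDEs).} Fix a test function $\varphi\in C^\infty_{cpt}(\Omega\setminus\p\A)$ and set $\widetilde\varphi:=\varphi\circ\Psi^h$. Since $\Psi^h(\p^*A)=\p\A$ by \eqref{eq:mapping_property_diffeo}, $\widetilde\varphi\in C^\infty_{cpt}(\Omega\setminus\p^*A)$ and is therefore admissible in the weak harmonicity of $w-\widetilde w$ on $\Omega\setminus\p^*A$. Changing variables $y=\Psi^h(x)$, using the chain-rule identities
\begin{equation*}
\nabla v(x) = (\nabla\Psi^h)^{\mathsf T}(x)\,\nabla v_h(\Psi^h(x)), \qquad
\nabla \widetilde\varphi(x)=(\nabla\Psi^h)^{\mathsf T}(x)\,\nabla\varphi(\Psi^h(x)),
\end{equation*}
and the transformation $dx=|\det\nabla\Psi^h|^{-1}\,dy$, I obtain
\begin{equation*}
0=\int_\Omega\nabla v\cdot\nabla\widetilde\varphi\,dx
=\int_\Omega \nabla v_h\cdot a^h\,\nabla\varphi\,dy,
\end{equation*}
which is exactly the weak form of $\nabla\cdot(a^h\nabla(w-\widetilde w)_h)=0$ in $\Omega\setminus\p\A$, i.e., $\Delta(w-\widetilde w)_h=\nabla\cdot\big((\Id-a^h)\nabla(w-\widetilde w)_h\big)$. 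Applying the same computation to $\widetilde w$ alone yields $\Delta \widetilde w_h=\nabla\cdot\big((\Id-a^h)\nabla\widetilde w_h\big)$. Since $\bar u$ is harmonic on $\Omega\setminus\p\A$, subtracting gives the claimed equation for $\bar u-\widetilde w_h$.

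\emph{Step 2 (boundary data).} The interior trace identity on $\p\A$ is automatic from $(\Psi^h)^{-1}(\p\A)=\p^*A$: $\mathrm{tr}_{\p\A}(w-\widetilde w)_h = \big(\mathrm{tr}_{\p^*A}(w-\widetilde w)\big)\circ(\Psi^h)^{-1}$. For $\bar u - \widetilde w_h$, the Gibbs--Thomson law for the strong solution gives $\bar u|_{\p\A}=H_{\p\A}$, while \eqref{eq:auxChemPotential2} yields $\widetilde w_h|_{\p\A}=-(\nabla\cdot\xi)\circ(\Psi^h)^{-1}$; the sum matches the stated expression. The homogeneous Neumann condition on $\p\Omega$ transfers for free because $\Psi^h=\Id$ in a neighborhood of $\p\Omega$: by \eqref{eq:no_contact_points} and the choice \eqref{eq:def_plateau_cutoff} of the cutoff $\zeta$, we have $\supp\zeta(\cdot,t)\subset B_{\ell(t)/2}(\p\A(t))\subset\{\dist(\cdot,\p\Omega)>\ell(t)/2\}$, so \eqref{eq:def_diffeo} reduces to the identity there; consequently $(\bar u-\widetilde w_h)|_{\p\Omega}=(\bar u-\widetilde w)|_{\p\Omega}$ and analogously for $(w-\widetilde w)_h$, and the homogeneous Neumann condition carries over from \eqref{eq:auxChemPotential3} and from the Neumann conditions for $\bar u$ and $w$.

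The only point that requires verification is that $a^h$ is indeed uniformly elliptic and bounded, which is needed to make sense of the divergence-form equation. This follows directly from Lemma~\ref{lem:geometric_identities}: the explicit formulas \eqref{eq:grad_diffeo}--\eqref{eq:div_diffeo}, together with the smallness assumption \eqref{eq:perturbative_regime1_again}, show that $\nabla\Psi^h$ is a small perturbation of the identity in $L^\infty(\Omega)$ (with $|\nabla\Psi^h-\Id|\lesssim 1/C$ and $\det\nabla\Psi^h$ bounded away from $0$ uniformly), so choosing $C\gg 1$ yields the required bounds on $a^h$. No other delicate step arises; I do not foresee a real obstacle here, as this lemma is purely the bookkeeping device that converts the Hanzawa transform into a pair of divergence-form problems ready for the quantitative stability arguments of the Dirichlet-to-Neumann operator used in Proposition~\ref{prop:stability_estimate_nonlocal_terms}.
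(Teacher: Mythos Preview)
Your proof is correct and is precisely the standard change-of-variables computation that the paper alludes to when it writes ``These assertions are immediate consequences of the well-known transformation formulas for PDEs in distributional form.'' You have simply filled in those details explicitly---the pullback of the weak Laplace equation via $\Psi^h$, the transfer of traces using $\Psi^h(\p^*A)=\p\A$, and the observation that $\Psi^h=\mathrm{Id}$ near $\p\Omega$ so the Neumann condition is preserved---and nothing more is needed.
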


In order to exploit the Hilbert space structure of~$H^{1/2}_{MS}(\p\A)$
for an estimate of~$R_{dissip}$ and $A_{dissip}$, respectively, 
we have to smuggle in the averages of the associated chemical potentials.
The following result ensures that this can be done solely at the cost
of controlled quantities. 

\begin{lemma}[Smuggling in averages of chemical potentials]
\label{lem:bound_averages}
For any $\delta \in (0,1)$ one may choose $C \gg_{\delta} 1$
from~\eqref{eq:perturbative_regime1_again} and $M \gg_{\delta} 1$
from~\eqref{eq:small_error} such that for some universal 
constant $\widetilde{C} > 0$
\begin{equation}
\begin{aligned}
\label{eq:bound_averages}
&\bigg|\,\dashint_{\p^* A \cap \Omega} w {-} \widetilde w \,\dH[d-1]\bigg|
+ \bigg|\,\dashint_{\p\A} (w {-} \widetilde w)_h \,\dH[d-1]\bigg|
\\&~
\leq \delta C_{PS}(\A)\bigg(\int_{\Omega} |\nabla (w {-} \widetilde w)|^2 \,dx\bigg)^\frac{1}{2}
+ \frac{\widetilde{C}}{\ell}\frac{\sqrt{E[A|\A]}}{\sqrt{\mathcal{H}^{d-1}(\p^*A\cap\Omega)}},
\end{aligned}
\end{equation}
where $C_{PS}(\A)$ is the constant from the Sobolev--Poincar\'{e} trace inequality:
for any $v \in H^1(\A)$,
\begin{align}
\label{eq:Sobolev_Poincare_trace}
\bigg(\,\dashint_{\p\A} \Big(v {-} \dashint_{\p\A} v \,\dH[d-1] \Big)^2\,\dH[d-1]\bigg)^\frac{1}{2}
\leq C_{PS}(\A) \bigg(\int_{\A} |\nabla v|^2 \,dx \bigg)^\frac{1}{2}.
\end{align}
Furthermore, 
\begin{equation}
\begin{aligned}
\label{eq:average_B}
&\bigg|\,\int_{\p^*A \cap \Omega} \n_{\p^* A} \cdot \big(B - \jump{\nabla\widetilde w}\big) \,\dH[d-1]\bigg|
\\&~
\leq \widetilde{C} \big\|\n_{\p\A}\cdot\jump{\nabla\bar{u}}\big\|_{L^\infty(\p\A)} 
\sqrt{\mathcal{H}^{d-1}(\p\A)} \sqrt{E_{vol}[A|\A]}.
\end{aligned}
\end{equation}
\end{lemma}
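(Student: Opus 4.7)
The plan is to control each of the two left-hand sides by combining the Gibbs--Thomson law~\eqref{WeakFormGibbsThomson} for the weak solution (which ties $w$ to the varifold curvature) with the boundary condition~\eqref{eq:auxChemPotential2} for $\widetilde w$ (which identifies $\mathrm{tr}_{\p^*A}\widetilde w$ with $-\nabla\cdot\xi$), together with the explicit geometric identities of Lemma~\ref{lem:geometric_identities} available in the perturbative graph regime and the harmonicity/volume-preservation properties built into the problem.

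For the first summand in~\eqref{eq:bound_averages} I would test the Gibbs--Thomson law with $B=\xi$, which is admissible since $\xi$ vanishes in $\Omega\setminus B_{\frac{3}{4}\ell}(\p\A)$ by~\eqref{eq:min_assumption6} and in particular along $\p\Omega$. In the perturbative regime one has $|\mu_t|_{\mathbb{S}^1}=\mathcal{H}^1\res(\p^*A\cap\Omega)$ with multiplicity one by~\eqref{eq:representation_varifold}, so the left-hand side of Gibbs--Thomson collapses to $\int_{\p^*A}(\mathrm{Id}-\n_{\p^*A}\otimes\n_{\p^*A}){:}\nabla\xi\,\dH[1]$, while integration by parts on the right-hand side (in the smooth open set $A(t)$, using $\xi=0$ along $\p\Omega$) produces $-\int_{\p^*A}w(\n_{\p^*A}\cdot\xi)\,\dH[1]$. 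Substituting $-\widetilde w=\nabla\cdot\xi$ on $\p^*A$ and using the formulas~\eqref{eq:grad_xi}--\eqref{eq:div_xi} (in particular $\nabla\xi\cdot\bar\n_{\p\A}=0$), a short rearrangement yields an identity of the form
\begin{align*}
\int_{\p^*A\cap\Omega}(w-\widetilde w)\,\dH[1]
&= \int_{\p^*A\cap\Omega}(w-\widetilde w)(1-\n_{\p^*A}\cdot\xi)\,\dH[1]
\\
&\quad-\int_{\p^*A\cap\Omega}(\nabla\cdot\xi)(1-\n_{\p^*A}\cdot\xi)\,\dH[1]+R,
\end{align*}
with a remainder $R$ controlled by $\|\nabla\xi\|_{L^\infty}\int_{\p^*A}|\n_{\p^*A}-\bar\n_{\p\A}|\,\dH[1]$. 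Exploiting the pointwise identities $(1-\n_{\p^*A}\cdot\xi)^2\leq 2(1-\n_{\p^*A}\cdot\xi)$ and $|\n_{\p^*A}-\bar\n_{\p\A}|^2\lesssim|\nabla^{tan}h|^2$, each term on the right is estimated by $\sqrt{E_{rel}[A,\mu|\A]}$ via Cauchy--Schwarz, with $\|w-\widetilde w\|_{L^2(\p^*A)}$ split into its average plus an oscillation bounded through Sobolev--Poincar\'e trace~\eqref{eq:Sobolev_Poincare_trace}. Choosing $M\gg_\delta 1$ in~\eqref{eq:small_error} forces the coefficient in front of the unknown average $|\dashint_{\p^*A}(w-\widetilde w)|$ below $1$, closing the bootstrap and producing the first summand of the desired bound.

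For the second summand $|\dashint_{\p\A}(w-\widetilde w)_h|$ in~\eqref{eq:bound_averages}, I change variables along the surface diffeomorphism $\Psi^h|_{\p^*A}\colon\p^*A\to\p\A$ from Lemma~\ref{lem:geometric_identities}, whose surface Jacobian $J_h$ satisfies $|J_h-1|\lesssim\|h\|_{W^{1,\infty}(\p\A)}\le 1/(16C)$ by~\eqref{eq:perturbative_regime1_again}. The difference between the two averages is a variance-type quantity bounded by $\|J_h-1\|_{L^\infty}\cdot\|(w-\widetilde w)-\dashint_{\p^*A}(w-\widetilde w)\|_{L^2(\p^*A)}$, and one more invocation of Sobolev--Poincar\'e trace delivers the full $\delta C_{PS}(\A)\|\nabla(w-\widetilde w)\|_{L^2(\Omega)}$ piece on the right-hand side of~\eqref{eq:bound_averages}, provided $C\gg_\delta 1$.

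For the estimate~\eqref{eq:average_B}, both leading-order contributions vanish exactly: the integration-by-parts formula~\eqref{eq:IBP} applied to $f=\nabla\widetilde w$ with $\eta=1$, combined with~\eqref{eq:auxChemPotential1} and~\eqref{eq:auxChemPotential3}, gives $\int_{\p^*A}\jump{\n_{\p^*A}\cdot\nabla\widetilde w}\,\dH[1]=0$, while the volume conservation of the strong solution together with~\eqref{eq:MS2} yields $\int_{\p\A}\V_{\p\A}\cdot\n_{\p\A}\,\dH[1]=0$. Using $B|_{B_{\ell/2}(\p\A)}=\V_{\p\A}\circ P_{\p\A}$ from~\eqref{eq:min_assumption9} and changing variables via $\Psi^h$, the remaining discrepancy $\int_{\p^*A}\n_{\p^*A}\cdot B\,\dH[1]-\int_{\p\A}\V_{\p\A}\cdot\n_{\p\A}\,\dH[1]$ reduces to an $O(\|h\|_{L^1(\p\A)})$ error weighted by $\|\jump{\n_{\p\A}\cdot\nabla\bar u}\|_{L^\infty(\p\A)}$, and Cauchy--Schwarz together with the lower bound~\eqref{eq:coercivity_Ebulk} of Lemma~\ref{lem:error_control_perturbative_regime} produces~\eqref{eq:average_B}. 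The delicate point of the entire argument is the bootstrap in the proof of the bound on $|\dashint_{\p^*A}(w-\widetilde w)|$: one must carefully track the powers of $\ell$ in the formulas~\eqref{eq:grad_xi}--\eqref{eq:div_xi} through all the Cauchy--Schwarz applications so that the geometric error terms carry the correct dimensional factor $1/\ell$ expected in~\eqref{eq:bound_averages} rather than $1/\ell^2$, while simultaneously exploiting~\eqref{eq:small_error} to drive the self-coupling coefficient strictly below one.
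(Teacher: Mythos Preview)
Your proposal is correct and follows essentially the same route as the paper: test Gibbs--Thomson with $B=\xi$, use $\widetilde w=-\nabla\cdot\xi$ on $\p^*A$ together with $(\nabla\xi)\bar\n_{\p\A}=0$ to extract factors of $(1-\n_{\p^*A}\cdot\xi)$, then bootstrap the self-coupling via~\eqref{eq:small_error}; for the second average compare via the surface Jacobian, and for~\eqref{eq:average_B} exploit the two exact cancellations (harmonicity of $\widetilde w$ and volume preservation of $\A$). One technical point you gloss over: the Sobolev--Poincar\'e trace inequality~\eqref{eq:Sobolev_Poincare_trace} is stated on $\p\A$, not on $\p^*A$, so before invoking it on the oscillation of $w-\widetilde w$ you must first transport to $\p\A$ via coarea/$\Psi^h$ (as the paper does explicitly, subtracting $\dashint_{\p\A}(w-\widetilde w)_h$ rather than $\dashint_{\p^*A}(w-\widetilde w)$ and then using~\eqref{eq:energyEstimate2}); this is straightforward in the perturbative regime but should be made explicit to ensure the constant $C_{PS}(\A)$ really depends only on $\A$.
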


We also rely on a regularity estimate for the transformed
chemical potential~$\widetilde w_h$.

\begin{lemma}[Schauder estimate for transformed chemical potential]
\label{lem:regularity_auxiliary_chemical_potential}
There exists a constant $C_{reg}(\A,\Lambda) \in (0,\infty)$ such that
\begin{equation}
\begin{aligned}
\label{eq:Schauder_estimate1}
\|\widetilde w_h\|_{C^{1,\frac{1}{2}}(\overline{\A \cap B_{\ell/2}(\p\A)})}
+ \|\widetilde w_h\|_{C^{1,\frac{1}{2}}(\overline{(\Omega\setminus\A) \cap B_{\ell/2}(\p\A)})} 
&\leq C_{reg}(\A,\Lambda).
\end{aligned}
\end{equation}
An analogous estimate holds for~$(\widetilde{w}_\vartheta)_h$.
\end{lemma}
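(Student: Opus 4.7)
The strategy is to regard $\widetilde w_h$ as the solution of a uniformly elliptic divergence-form equation on each of the two open subdomains of $B_{\ell/2}(\p\A) \setminus \p\A$ with H\"{o}lder-continuous coefficients and $C^{1,1/2}$ Dirichlet data on $\p\A$, and then to invoke classical Schauder boundary regularity. Since $\p\A \subset B_{\ell(t)}(\p\A(t))$ is compactly contained in $\Omega$ by~\eqref{eq:no_contact_points}, the Neumann condition on $\p\Omega$ plays no role for the local estimate we seek.

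First, pulling back the harmonic equation $\Delta \widetilde w = 0$ on $\Omega\setminus\p^*A$ through the diffeomorphism $\Psi^h$ from~\eqref{eq:def_diffeo} yields, by exactly the computation underlying Lemma~\ref{lem:transformed_chemical_potentials}, the divergence-form equation $\nabla \cdot (a^h \nabla \widetilde w_h) = 0$ on $\Omega \setminus \p\A$, with $a^h = |\det \nabla \Psi^h|^{-1} (\nabla \Psi^h)^{\mathsf{T}} \nabla \Psi^h$. The smallness~\eqref{eq:perturbative_regime1_again} together with the representation~\eqref{eq:grad_diffeo} ensures that $\nabla \Psi^h$ is close to $\mathrm{Id}$, so that $a^h$ is uniformly elliptic and bounded, with ellipticity constant and $L^\infty$ norm controlled by the geometry of $\A$. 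Via the boundary condition~\eqref{eq:auxChemPotential2}, the trace of $\widetilde w_h$ on $\p\A$ is $-(\nabla \cdot \xi) \circ (\Psi^h)^{-1}$, which by~\eqref{eq:inv_diffeo_short} reduces on $\p\A$ to $-(\nabla \cdot \xi)(\cdot + \bar h\, \bar\n_{\p\A})$.

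Next, I would upgrade these ingredients to the H\"{o}lder scale. Because $\p\A$ is one-dimensional, the Sobolev embedding $W^{2,2}(\p\A) \hookrightarrow C^{1,1/2}(\p\A)$ together with~\eqref{eq:regularity_graph} gives $h \in C^{1,1/2}(\p\A)$, with norm bounded by $\|h\|_{W^{2,p}(\p\A)}$. Combined with the smoothness of $P_{\p\A}$, $\bar\n_{\p\A}$, the cutoff $\zeta$, and the signed distance function on $B_{\ell/2}(\p\A)$, the formula~\eqref{eq:grad_diffeo} then yields $\nabla \Psi^h \in C^{0,1/2}(\overline\Omega)$ and hence $a^h \in C^{0,1/2}(\overline\Omega)$. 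By~\eqref{eq:div_xi}, $\nabla \cdot \xi$ is smooth on $B_{\ell/2}(\p\A)$, and composition with the $C^{1,1/2}$ map $(\Psi^h)^{-1}|_{\p\A}$ puts the Dirichlet data in $C^{1,1/2}(\p\A)$, again with quantitative norm controlled by $\|h\|_{W^{2,p}(\p\A)}$ and the geometry of $\A$.

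Finally, classical H\"{o}lder regularity for linear uniformly elliptic equations in divergence form with $C^{0,1/2}$ coefficients and $C^{1,1/2}$ Dirichlet data on a smooth boundary piece (cf.\ Campanato/Giaquinta-type estimates, or Gilbarg--Trudinger, Theorem~8.34) yields $\widetilde w_h \in C^{1,1/2}$ up to $\p\A$ on each side, with the bound~\eqref{eq:Schauder_estimate1} in terms of the $C^{0,1/2}$ norm of $a^h$, the $C^{1,1/2}$ norm of the boundary data, and $\|\widetilde w_h\|_{L^\infty}$. The last ingredient is controlled by the boundary data via the maximum principle, and all of the preceding quantities ultimately depend only on $\A$ and on $\|h\|_{W^{2,p}(\p\A)}$; the latter in turn is bounded in terms of $\A$ and $\Lambda$ via~\eqref{eq:bounded_dissipation} and the construction of the graph representation in Proposition~\ref{prop:perturbative_graph_regime}, giving the advertised constant $C_{reg}(\A, \Lambda)$. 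The argument for $(\widetilde w_\vartheta)_h$ is entirely analogous: the trace on $\p\A$ is $\vartheta \circ (\Psi^h)^{-1}$, which by~\eqref{eq:construction_vartheta} equals $-\bar h/\ell^2$ on $\p\A$ and is therefore in $C^{1,1/2}(\p\A)$ with the same type of bound.

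The principal obstacle is bookkeeping rather than conceptual: one must verify that the H\"{o}lder norms of $\Psi^h$ and of the composed boundary data depend on $h$ only through $\|h\|_{W^{2,p}(\p\A)}$, and that this latter norm is itself majorized by a function of $\A$ and $\Lambda$ alone. Both facts rest on the quantitative nature of the perturbative graph representation from Proposition~\ref{prop:perturbative_graph_regime}, together with the one-dimensional Sobolev embeddings available on the closed curve $\p\A$.
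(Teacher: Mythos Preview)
Your approach is correct and matches the paper's: transform to a divergence-form equation for $\widetilde w_h$ with coefficient $a^h$, check that $a^h \in C^{0,1/2}$ and the Dirichlet data on $\p\A$ are $C^{1,1/2}$, and invoke standard Schauder theory. The paper even writes the boundary trace explicitly as $H_{\p\A}/(1-H_{\p\A}h)$, which is your $-(\nabla\cdot\xi)\circ(\Psi^h)^{-1}$ evaluated on $\p\A$ via~\eqref{eq:div_xi} and~\eqref{eq:inv_diffeo_short}.

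The one place where you are too quick is the sentence ``the latter in turn is bounded in terms of $\A$ and $\Lambda$ via~\eqref{eq:bounded_dissipation} and the construction of the graph representation in Proposition~\ref{prop:perturbative_graph_regime}.'' The \emph{statement} of Proposition~\ref{prop:perturbative_graph_regime} only asserts $h\in W^{2,p}$ qualitatively and gives quantitative control on $\|h\|_{L^\infty}$ and $\|\nabla^{tan}h\|_{L^\infty}$; it does not record a bound on $\|h''\|_{L^p}$. The paper therefore derives this bound explicitly inside the present proof: the assumption~\eqref{eq:bounded_dissipation} combined with~\eqref{eq:L2ControlChemPotential} controls $\|w\|_{H^1(\Omega)}$, Sch\"atzle's trace estimate then gives $\|\mathrm{tr}_{\p^*A}w\|_{L^p(\p^*A)}\leq C(\A,\Lambda,p)$, the Gibbs--Thomson law~\eqref{WeakFormGibbsThomson} identifies this trace with the scalar curvature of $\p^*A$, and writing that curvature in terms of $h,h',h''$ via the parametrization $\gamma_h=\bar\gamma+(h\circ\bar\gamma)(\n_{\p\A}\circ\bar\gamma)$ yields $\|h''\|_{L^p(\p\A)}\leq C(\A,\Lambda,p)$, whence $\|h'\|_{C^{0,1/2}}\leq C(\A,\Lambda)$ by Morrey. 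This chain is the actual content behind your deferral to Proposition~\ref{prop:perturbative_graph_regime}, and it is worth spelling out.
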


Another useful input is given by energy estimates.

\begin{lemma}[Energy estimates for transformed chemical potentials]
\label{lem:energy_estimates}
Decompose $(w - \widetilde{w})_h = v^{(1)}_h + v^{(2)}_h$, where
$v^{(1)}_h \in H^1(\Omega)$ denotes the unique solution of
\begin{align}
\label{eq:auxiTransformedPDE1}
\Delta v^{(1)}_h &= 0
&& \text{in } \Omega \setminus \p\A,
\\
\mathrm{tr}_{\p\A} v^{(1)}_h
&= \big(\mathrm{tr}_{\p^* A} (w) + \nabla\cdot\xi\big)\circ (\Psi^h)^{-1}
&& \text{on } \p\A,
\\
(\n_{\p\Omega}\cdot\nabla) v^{(1)}_h &= 0 
&& \text{on } \p\Omega.
\end{align}
For any $\delta \in (0,1)$ one may choose $C \gg_{\delta} 1$
from~\eqref{eq:perturbative_regime1_again} such that
\begin{align}
\label{eq:energyEstimate1}
\int_{\Omega} |\nabla v^{(1)}_h|^2 \,dx 
&\leq (1 {+} \delta) \int_{\Omega} |\nabla (w - \widetilde w)_h|^2 \,dx,
\\
\label{eq:energyEstimate2}
\int_{\Omega} |\nabla (w - \widetilde w)_h|^2 \,dx 
&\leq (1 {+} \delta) \int_{\Omega} |\nabla (w - \widetilde w)|^2 \,dx.
\end{align}
There also exists a universal constant $\widetilde {C} > 0$ such that
\begin{equation}
\begin{aligned}
\label{eq:auxEstimate}
&\bigg|\,\int_{\Omega} \nabla v^{(1)}_h \cdot (\mathrm{Id} - a^h)\nabla \widetilde w_h\,dx\bigg|
%\\&~
\leq \widetilde {C}\bigg(\int_{\Omega} |\nabla v^{(1)}_h|^2 \,dx \bigg)^\frac{1}{2}
C_{reg}(\A,\Lambda)\sqrt{E[A|\A]},
\end{aligned}
\end{equation}
where $C_{reg}(\A,\Lambda)$ is the constant from Lemma~\ref{lem:regularity_auxiliary_chemical_potential}.
\end{lemma}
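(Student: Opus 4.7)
The three bounds can be addressed in turn. For \eqref{eq:energyEstimate2}, the natural approach is to change variables via $y = \Psi^h(x)$, so that
\[
\int_{\Omega} |\nabla (w{-}\widetilde w)_h|^2\,dy = \int_{\Omega} \big|(\nabla \Psi^h)^{-\mathsf T} \nabla (w{-}\widetilde w)\big|^2\,|\det \nabla \Psi^h| \,dx.
\]
The explicit formulas~\eqref{eq:grad_diffeo}--\eqref{eq:div_diffeo} in Lemma~\ref{lem:geometric_identities} show that both $|\det\nabla\Psi^h|$ and the singular values of $\nabla\Psi^h$ differ from $1$ by at most a universal multiple of $\|h/\ell\|_{L^\infty(\p\A)} + \|\nabla^{tan}h\|_{L^\infty(\p\A)}$, which is at most $1/(16C)$ by~\eqref{eq:perturbative_regime1_again}. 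Choosing $C$ large (depending on $\delta$) then gives~\eqref{eq:energyEstimate2}.

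For~\eqref{eq:energyEstimate1}, the plan is to invoke minimality of the harmonic extension. By construction, $v^{(1)}_h$ minimizes the Dirichlet energy in each of $\A$ and $\Omega\setminus\overline{\A}$ subject to its prescribed trace on $\p\A$ and vanishing Neumann data on $\p\Omega$. The function $(w{-}\widetilde w)_h$ is an admissible competitor: its trace on $\p\A = \Psi^h(\p^*A)$ equals the prescribed datum by the mapping property~\eqref{eq:mapping_property_diffeo}, and because the cutoff $\zeta$ in~\eqref{eq:def_plateau_cutoff} is supported in $B_{\ell/2}(\p\A)$ --- disjoint from $\p\Omega$ thanks to~\eqref{eq:no_contact_points} --- one has $\Psi^h \equiv \mathrm{Id}$ near $\p\Omega$, so $(w{-}\widetilde w)_h$ inherits the homogeneous Neumann condition from $w$ and $\widetilde w$. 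Minimality then yields the even sharper bound $\int_\Omega |\nabla v^{(1)}_h|^2 \leq \int_\Omega |\nabla (w{-}\widetilde w)_h|^2$.

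The remaining estimate~\eqref{eq:auxEstimate} is the most substantive. The plan is to apply Cauchy--Schwarz to factor out $\|\nabla v^{(1)}_h\|_{L^2}$ and then exploit that $a^h \equiv \mathrm{Id}$ outside $\supp \zeta \subset B_{\ell/2}(\p\A)$:
\[
\bigg|\int_\Omega \nabla v^{(1)}_h \cdot (\mathrm{Id}{-}a^h)\nabla\widetilde w_h\,dx\bigg|
\leq \|\nabla v^{(1)}_h\|_{L^2(\Omega)}\,\|\mathrm{Id}{-}a^h\|_{L^2(B_{\ell/2}(\p\A))}\,\|\nabla\widetilde w_h\|_{L^\infty(B_{\ell/2}(\p\A))}.
\]
The last factor is bounded by $C_{reg}(\A,\Lambda)$ via the Schauder estimate~\eqref{eq:Schauder_estimate1}. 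For the middle factor, expanding $a^h = \frac{1}{|\det\nabla\Psi^h|}(\nabla\Psi^h)^\mathsf{T}\nabla\Psi^h$ around the identity using Lemma~\ref{lem:geometric_identities} produces the pointwise bound $|\mathrm{Id}{-}a^h|(x) \lesssim |\bar h(x)|/\ell + |\nabla^{tan}h(P_{\p\A}(x))|$. Passing to tubular-neighborhood coordinates $x = y + s\,\n_{\p\A}(y)$ and integrating over $s\in(-\ell/2,\ell/2)$ contributes a factor of $\ell$ and converts the norms to integrals over $\p\A$, so
\[
\|\mathrm{Id}{-}a^h\|_{L^2(B_{\ell/2}(\p\A))}^2 \lesssim \ell \int_{\p\A} \Big(\big(\tfrac{h}{\ell}\big)^2 + |\nabla^{tan}h|^2\Big)\,\dH[1].
\]
Lemma~\ref{lem:error_control_perturbative_regime} bounds the right-hand side by $C\,\ell\,E[A,\mu|\A]$, and absorbing $\sqrt\ell$ into the constant (since $\ell$ is bounded locally uniformly on $[0,T_*)$) yields~\eqref{eq:auxEstimate}.

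The main obstacle I anticipate is the pointwise control of $\mathrm{Id}-a^h$ on the support of $\zeta$ with the correct scaling in $h$ and $\nabla^{tan}h$: one must verify that the prefactor $(1-\bar H_{\p\A}s_{\p\A})^{-1}$ appearing in~\eqref{eq:grad_diffeo} stays bounded (which holds because $|s_{\p\A}|\leq\ell/2$ on $\supp\zeta$ and $\ell$ can be taken small relative to $\|H_{\p\A}\|_{L^\infty}^{-1}$) and that the outer term $(\bar{\n}_{\p\A}\cdot\nabla\zeta)\bar h\,\bar{\n}_{\p\A}\otimes\bar{\n}_{\p\A}$ contributes at worst $|\bar h|/\ell$ (since $|\nabla\zeta|\lesssim 1/\ell$). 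Once these pointwise bounds are in place, the $L^2$ integration and the appeal to Lemma~\ref{lem:error_control_perturbative_regime} are routine.
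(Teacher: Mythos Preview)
Your arguments for \eqref{eq:energyEstimate2} and \eqref{eq:auxEstimate} match the paper's proof essentially verbatim: change of variables with the explicit Jacobian formulas for the first, and Cauchy--Schwarz plus the pointwise bound on $\mathrm{Id}-a^h$ in the tubular neighborhood combined with Lemma~\ref{lem:error_control_perturbative_regime} for the third.

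For \eqref{eq:energyEstimate1} you take a different and cleaner route. The paper argues via the PDE for $v^{(2)}_h$: since $v^{(2)}_h$ has zero trace on $\p\A$, zero Neumann data on $\p\Omega$, and satisfies $\Delta v^{(2)}_h = \nabla\cdot((\mathrm{Id}-a^h)\nabla(w-\widetilde w)_h)$, testing against $v^{(2)}_h$ gives $\|\nabla v^{(2)}_h\|_{L^2}\leq \|\mathrm{Id}-a^h\|_{L^\infty}\|\nabla(w-\widetilde w)_h\|_{L^2}$, and then the triangle inequality yields the factor $(1+\delta)$. Your variational argument---$v^{(1)}_h$ is the Dirichlet-energy minimizer for its trace on $\p\A$ (the Neumann condition on $\p\Omega$ being natural), and $(w-\widetilde w)_h$ is an admissible competitor---gives the sharper inequality with constant $1$ instead of $1+\delta$, and avoids the PDE computation entirely. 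Note also that you do not even need to verify the Neumann condition for the competitor, since it is the natural boundary condition of the minimization. Both approaches are correct; yours is more direct here, while the paper's PDE-for-$v^{(2)}_h$ viewpoint is what is reused later in \eqref{eq:reduction6}.
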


The final ingredient consists of an interpolation estimate,
transferring control in terms of the Hilbert space structure
on $H^{1/2}_{MS}(\p\A)$ to control in terms
of the standard $H^1$ and~$H^2$~norm on~$\p\A$ (and therefore, for our purposes,
to control in terms of our error functional and the dissipation).

\begin{lemma}[Interpolation estimate]
\label{lem:interpolation_estimate}
There exists a constant $C_{int}(\A,\Omega) \in (0,\infty)$
such that for all $f \in H^2(\p\A)$
with $\dashint_{\p\A} f \,\dH[d-1] = 0$ it holds
\begin{align}
\label{eq:interpolation_estimate}
\|f\|_{H^{1/2}_{MS}(\p\A)}
\leq C_{int}(\A,\Omega) \|f\|_{H^1(\p\A)}^\frac{1}{2}\|f\|_{H^2(\p\A)}^\frac{1}{2}.
\end{align}
\end{lemma}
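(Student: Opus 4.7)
The plan is to work with the variational characterization of the $H^{1/2}_{MS}(\p\A)$-norm. Definition~\eqref{eq:def_Hilbert_space_Mullins_Sekerka} together with the Dirichlet principle gives
\begin{align*}
\|f\|^2_{H^{1/2}_{MS}(\p\A)} = \min\Bigl\{\int_\Omega|\nabla v|^2\,dx \colon v\in H^1(\Omega),\ \mathrm{tr}_{\p\A} v = f\Bigr\},
\end{align*}
so it suffices to exhibit a single admissible extension $v$ whose Dirichlet energy is controlled by the square of the right-hand side of~\eqref{eq:interpolation_estimate}.

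For this I would take $v$ to be a cutoff of the normal extension of $f$ into the tubular neighborhood $B_{\ell/2}(\p\A)$. Concretely, fixing $\eta\in C^\infty_{cpt}(\mathbb{R};[0,1])$ with $\eta(0)=1$ and $\supp\eta\subset[-1/2,1/2]$, I define
\begin{align*}
v(x) := \eta\bigl(s_{\p\A}(x)/\ell\bigr)\, f(P_{\p\A}(x))\quad\text{for } x\in B_{\ell/2}(\p\A),
\end{align*}
and extend $v$ by zero outside. Thanks to~\eqref{eq:no_contact_points} the support of $v$ lies compactly in $\Omega$, and since $\eta(\pm 1/2)=0$ the zero extension belongs to $H^1(\Omega)$ with $\mathrm{tr}_{\p\A} v = f$. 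In tubular neighborhood coordinates $(p,s)\in\p\A\times(-\ell/2,\ell/2)$ the factors $\eta(s/\ell)$ and $f\circ P_{\p\A}$ depend only on $s$ and on $p$, respectively, so the tangential and normal contributions to $\nabla v$ are orthogonal and one obtains the pointwise bound
\begin{align*}
|\nabla v(p,s)|^2 \leq C(\A)\Bigl(\eta(s/\ell)^2\,|\nabla^{tan}f(p)|^2 + \ell^{-2}\,\eta'(s/\ell)^2\,f(p)^2\Bigr),
\end{align*}
where the prefactor $C(\A)$ absorbs the metric factor $(1-s\bar H_{\p\A})^{-2}$, which is bounded uniformly on $|s|\leq\ell/2$ by the regular tubular neighborhood assumption and the $C^2$-regularity of $\p\A$. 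Integrating in $(p,s)$, using the bounded Jacobian of the tubular change of variables, and rescaling $\sigma=s/\ell$ so that the $s$-integrals reduce to universal constants, I obtain
\begin{align*}
\int_\Omega|\nabla v|^2\,dx \leq C(\A,\eta)\,\ell\,\Bigl(\|\nabla^{tan}f\|_{L^2(\p\A)}^2 + \ell^{-2}\|f\|_{L^2(\p\A)}^2\Bigr).
\end{align*}
Taking square roots, applying $\sqrt{a^2+b^2}\leq a+b$ for $a,b\geq 0$, and using $\ell<1$ finishes the proof of~\eqref{eq:interpolation_estimate}.

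Neither the mean-zero condition on $f$ nor the $H^2$-bound $\|f\|_{H^2(\p\A)}\leq L$ enters the estimate directly: the former is needed only for $f$ to lie in $H^{1/2}_{MS}(\p\A)$ in the first place, while the latter (together with the dependence of $C_{int}$ on $L$) appears to be retained mainly for uniformity with the rest of the section. The only actual subtlety is the uniform bound on the tubular-coordinate metric factor, which is immediate from the hypotheses on the tubular neighborhood of $\p\A$; beyond that the computation is a standard cutoff estimate.
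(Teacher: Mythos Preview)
Your argument is correct and is genuinely different from the paper's. The paper proceeds by contradiction: assuming a sequence $f_k$ with $\|f_k\|_{H^{1/2}_{MS}}=1$ but $\|f_k\|_{[L^2,H^1]_{1/2}}\to 0$, it uses the uniform bound $\|f_k\|_{H^2}\leq L$ to extract a subsequence of the harmonic extensions $u_k$ converging strongly in $H^1(\Omega)$ (via $H^2$-elliptic regularity and Rellich), and then derives a contradiction from the fact that the limit must be identically zero. The final passage from the interpolation norm $[L^2,H^1]_{1/2}$ to the scaled $L^2$--$H^1$ bound is left as a ``scaling'' remark. Your construction via a cut-off normal extension and the Dirichlet principle is more direct and more elementary: it yields the estimate with an explicit constant depending only on $\A$ (through the tubular metric factor) and the chosen profile~$\eta$, and---as you observe---does not need the a priori $H^2$-bound at all, so the $L$-dependence of $C_{int}$ is in fact spurious in your approach. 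The paper's compactness argument, by contrast, genuinely consumes the bound $\|f\|_{H^2}\leq L$ to gain the $H^2\hookrightarrow H^1$ compactness step. Your route also makes the $\ell$-scaling completely transparent, whereas in the paper it is hidden in the final appeal to scaling of the Sobolev--Slobodeckij norm.
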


We have now everything in place to proceed with the proofs.

\begin{proof}[Proof of Proposition~\ref{prop:stability_estimate_nonlocal_terms}, 
							Part I: Estimate~\eqref{eq:estimate_R_dissip}]
Next to the decomposition $(w {-} \widetilde{w})_h = v^{(1)}_h + v^{(2)}_h$ 
as defined in Lemma~\ref{lem:energy_estimates}, decompose also
$\bar u {-} \widetilde w_h = \bar{v}^{(1)}_h + \bar{v}^{(2)}_h$,
where $\bar{v}^{(1)}_h \in H^1(\Omega)$ denotes the unique solution of
\begin{align}
\label{eq:auxiTransformedPDE2}
\Delta \bar{v}^{(1)}_h &= 0
&& \text{in } \Omega \setminus \p\A,
\\
\mathrm{tr}_{\p\A} \bar{v}^{(1)}_h
&= \big(H_{\p\A} + (\nabla\cdot\xi)\circ (\Psi^h)^{-1}\big)
&& \text{on } \p\A,
\\
(\n_{\p\Omega}\cdot\nabla) \bar{v}^{(1)}_h &= 0 
&& \text{on } \p\Omega.
\end{align}
The main claim of the proof then is that the
argument for the estimate~\eqref{eq:estimate_R_dissip}
boils down to an estimate of
\begin{equation}
\begin{aligned}
\label{eq:reduction_argument}
Err
&:= \bigg|\int_{\p\A} \Big(v^{(1)}_h - \dashint_{\p\A} v^{(1)}_h \,\dH[d-1]\Big)
\n_{\p\A} \cdot \jump{\nabla \bar{v}^{(1)}_h} \,\dH[d-1]\bigg|.
\end{aligned}
\end{equation}
The estimate for~$Err$ itself will be a 
consequence of the Hilbert space structure defined on~$H^{1/2}_{MS}(\p\A)$,
the interpolation estimate~\eqref{eq:interpolation_estimate},
that our error functional controls the $H^1$~norm on~$\p\A$
of the height function~$h$, see~\eqref{eq:coercivity_Erel} and~\eqref{eq:coercivity_Ebulk},
and that the dissipation term
$\int_{\Omega} \frac{1}{2}|(\nabla w {-} \nabla\widetilde w)(\cdot,t)|^2 \,dx$ 
controls second-order derivatives of~$h$.

\textit{Step 1: Reduction argument.}
Recall that we want to estimate
\begin{align}
\widetilde{Err} := \bigg|\int_{\p^*A \cap \Omega} (w {-} \widetilde{w})
\,\n_{\p^*A}\cdot\big(B + \jump{\nabla\widetilde w}\big) \,\dH[d-1]\bigg|.
\end{align}
To this end, we first smuggle in the average~$\dashint_{\p\A} (w {-} \widetilde w)_h \,\dH[d-1]$
allowing us to deduce from~\eqref{eq:bound_averages}, \eqref{eq:average_B} 
and~\eqref{eq:comparable_length} that
\begin{equation}
\begin{aligned}
\label{eq:reduction1}
\widetilde{Err} &\leq \bigg|\int_{\p^*A \cap \Omega} \Big((w {-} \widetilde{w})
- \dashint_{\p\A} (w {-} \widetilde w)_h \,\dH[d-1]\Big)
\,\n_{\p^*A}\cdot\big(B + \jump{\nabla\widetilde w}\big) \,\dH[d-1]\bigg|
\\&~~~ 
+ \frac{\delta}{8} \int_{\Omega} |\nabla (w {-} \widetilde w)|^2 \,dx
+ \frac{\widetilde{C}}{\delta}  E[A,\mu|\A].
\end{aligned}
\end{equation}

In a second step, we will transform by an application of the 
area formula~\eqref{eq:areaFormulaSurfaceIntegral} 
the integral on~$\p^*A \cap \Omega$ into an integral on~$\p\A$.
Note to this end that, thanks to the $C^{1,\frac{1}{2}}$-regularity 
of~$\widetilde{w}_h$ established in Lemma~\ref{lem:regularity_auxiliary_chemical_potential},
the jump of~$(\nabla \widetilde{w})_h$ across~$\p\A$ in direction of~$\n_{\p^*A}\circ\big.(\Psi^h)^{-1}\big|_{\p\A}$
is equal to the jump of~$(\nabla \widetilde{w})_h$ across~$\p\A$ in direction of~$\n_{\p\A}$.
Hence, recalling also from~\eqref{eq:min_assumption9} that $B = 
(\bar{\n}_{\p\A} \cdot (\jump{\nabla\bar{u}}\circ P_{\p\A}))\bar{\n}_{\p\A}$,
we obtain by an application of~\eqref{eq:areaFormulaSurfaceIntegral}
as well as~\eqref{eq:normalInterfaceWeakSolution} that
\begin{align}
\nonumber
&\bigg|\int_{\p^*A \cap \Omega} \Big((w {-} \widetilde{w})
- \dashint_{\p\A} (w {-} \widetilde w)_h \,\dH[d-1]\Big)
\,\n_{\p^*A}\cdot\big(B + \jump{\nabla\widetilde w}\big) \,\dH[d-1]\bigg|
\\&
\label{eq:reduction2}
\leq \bigg|\int_{\p\A} \prod_{i=1}^{d-1} (1 {-} h\kappa^{i}_{\p\A})
\Big((w {-} \widetilde{w})_h
- \dashint_{\p\A} (w {-} \widetilde w)_h \,\dH[d-1]\Big)
\n_{\p\A}\cdot \jump{\nabla\bar u {-} (\nabla \widetilde w)_h} \,\dH[d-1]\bigg|
\\&~~~~
\nonumber
+ \sum_{i=1}^{d-1} 
\bigg|\int_{\p\A}  \prod_{j=1,j\neq i}^{d-1} (1 {-} h\kappa^{j}_{\p\A})
(\ta^{i}_{\p\A}\cdot\nabla_{\p\A})h
\\&~~~~~~~~~~~~~~~~
\nonumber
\times\Big((w {-} \widetilde{w})_h - \dashint_{\p\A} (w {-} \widetilde w)_h \,\dH[d-1]\Big)
\ta^{i}_{\p\A}
\cdot \jump{\nabla\bar u {-} (\nabla \widetilde w)_h} \,\dH[d-1]\bigg|
\\&
\nonumber
=: \widetilde{Err}_1 + \widetilde{Err}_2. 
\end{align}

We proceed with an estimate for~$\widetilde{Err}_2$. To this end, we first record
the following elementary identities:
\begin{align}
\label{eq:aux_reduction2}
\n_{\p\A} \cdot \jump{(\nabla \widetilde w)_h} 
&= \n_{\p\A} \cdot \jump{\nabla (\widetilde w_h)} 
&& \text{on } \p\A,
\\
\label{eq:aux_reduction3}
\ta^{i}_{\p\A} \cdot \jump{(\nabla \widetilde w)_h}
&= \Big(\frac{1}{1 {-} h\kappa^{i}_{\p\A}}\,\ta^{i}_{\p\A} 
- \frac{(\ta^{i}_{\p\A}\cdot\nabla_{\p\A})h}{1 {-} h\kappa^{i}_{\p\A}}\,\n_{\p\A}\Big)
\cdot \jump{\nabla (\widetilde w_h)}
&& \text{on } \p\A.
\end{align}
Indeed, the claims~\eqref{eq:aux_reduction2}--\eqref{eq:aux_reduction3}
are direct consequences of the chain rule in the form of
$(\nabla \widetilde w)_h = \nabla (\widetilde w_h)(\nabla\Psi^h \circ (\Psi^h)^{-1})$
and the formulas~\eqref{eq:grad_diffeo} and~\eqref{eq:inv_diffeo_short}.
Hence, we obtain from an application of the Cauchy--Schwarz inequality,
Young's inequality, the Sobolev--Poincar\'{e} trace inequality~\eqref{eq:Sobolev_Poincare_trace},
and the estimates~\eqref{eq:energyEstimate2}, 
\eqref{eq:Schauder_estimate1} and~\eqref{eq:coercivity_Erel}
\begin{align}
\label{eq:reduction3}
\widetilde{Err}_2 
\leq 
\frac{\delta}{8} \int_{\Omega} |\nabla (w {-} \widetilde w)|^2 \,dx
+ \frac{\widetilde{C}}{\delta}  E[A,\mu|\A].
\end{align}

It remains to control~$\widetilde{Err}_1$. Recalling that
$|\kappa^{i}_{\p\A}| \leq 1/\ell$, it follows analogously to~\eqref{eq:reduction3},
relying this time on~\eqref{eq:coercivity_Ebulk} instead of~\eqref{eq:coercivity_Erel}
and~\eqref{eq:aux_reduction2} instead of~\eqref{eq:aux_reduction3}, that
\begin{equation}
\begin{aligned}
\label{eq:reduction4}
\widetilde{Err}_1 
&\leq \bigg|\int_{\p\A} \Big((w {-} \widetilde{w})_h
- \dashint_{\p\A} (w {-} \widetilde w)_h \,\dH[d-1]\Big)
\n_{\p\A}\cdot \jump{\nabla(\bar u {-} \widetilde w_h)} \,\dH[d-1]\bigg|
\\&~~~
+ \frac{\delta}{8} \int_{\Omega} |\nabla (w {-} \widetilde w)|^2 \,dx
+ \frac{\widetilde{C}}{\delta}  E[A,\mu|\A].
\end{aligned}
\end{equation}
Recalling the decompositions $(w {-} \widetilde{w})_h = v^{(1)}_h + v^{(2)}_h$ 
and $\bar u {-} \widetilde w_h = \bar{v}^{(1)}_h + \bar{v}^{(2)}_h$,
where we in addition note that $v^{(2)}_h = \bar{v}^{(2)}_h = 0$ on~$\p\A$ by virtue of
the definitions of~$v^{(1)}_h$ and~$\bar{v}^{(1)}_h$, we further estimate
by triangle inequality and the definition~\eqref{eq:reduction_argument} of~$Err$
\begin{align}
\nonumber
&\bigg|\int_{\p\A} \Big((w {-} \widetilde{w})_h
- \dashint_{\p\A} (w {-} \widetilde w)_h \,\dH[d-1]\Big)
\n_{\p\A}\cdot \jump{\nabla(\bar u {-} \widetilde w_h)} \,\dH[d-1]\bigg|
\\&~
\label{eq:reduction5}
\leq Err
+ \bigg|\int_{\p\A} \Big(v^{(1)}_h - \dashint_{\p\A} v^{(1)}_h \,\dH[d-1]\Big)
\n_{\p\A}\cdot \jump{\nabla\bar{v}^{(2)}_h} \,\dH[d-1]\bigg|
\\&~ 
\nonumber
=: Err + \widetilde{Err}'_1. 
\end{align}
In order to estimate the remaining term~$\widetilde{Err}'_1$,
we record that
\begin{align}
\label{eq:transformedPDE4}
\Delta \bar{v}^{(2)}_h &= 
\nabla\cdot\big((a^h {-} \mathrm{Id})\nabla \widetilde{w}_h\big)
&& \text{in } \Omega \setminus \p\A,
\\
\mathrm{tr}_{\p\A} \bar{v}^{(2)}_h &= 0
&& \text{on } \p\A,
\\
(\n_{\p\Omega}\cdot\nabla)\bar{v}^{(2)}_h &= 0 
&& \text{on } \p\Omega.
\end{align}
Hence, testing in a first step the PDE satisfied by~$\bar{v}^{(2)}_h$
with the test function $v^{(1)}_h - \dashint_{\p\A} v^{(1)}_h \,\dH[d-1]$, 
and in a second step the PDE satisfied by~$v^{(1)}_h$ with
the test function~$\bar{v}^{(2)}_h$, we get
\begin{align}
\nonumber
&\int_{\p\A} \Big(v^{(1)}_h - \dashint_{\p\A} v^{(1)}_h \,\dH[d-1]\Big)
\n_{\p\A}\cdot \jump{\nabla\bar{v}^{(2)}_h} \,\dH[d-1]
\\&~ \nonumber
= \int_{\Omega} \nabla v^{(1)}_h \cdot \nabla\bar{v}^{(2)}_h \,dx
- \int_{\Omega} \nabla v^{(1)}_h  \cdot \big((a^h {-} \mathrm{Id})\nabla \widetilde{w}_h\big) \,dx
\\&~~~~ \nonumber
+ \int_{\p\A} \Big(v^{(1)}_h - \dashint_{\p\A} v^{(1)}_h \,\dH[d-1]\Big)
\n_{\p\A}\cdot (a^h{-}\mathrm{Id})\jump{\nabla\widetilde{w}_h} \,\dH[d-1]
\\&~ \label{eq:reduction6}
= - \int_{\Omega} \nabla v^{(1)}_h  \cdot \big((a^h {-} \mathrm{Id})\nabla \widetilde{w}_h\big) \,dx
\\&~~~~ \nonumber
+ \int_{\p\A} \Big((w {-} \widetilde{w})_h - \dashint_{\p\A} (w {-} \widetilde{w})_h \,\dH[d-1]\Big)
\n_{\p\A}\cdot (a^h{-}\mathrm{Id})\jump{\nabla\widetilde{w}_h} \,\dH[d-1].
\end{align}
In particular, based on the same arguments leading to, e.g., \eqref{eq:reduction3},
the definition of~$a^h$, the formulas~\eqref{eq:grad_diffeo} and~\eqref{eq:div_diffeo},
as well as the estimates from Lemma~\ref{lem:energy_estimates}, we deduce
from~\eqref{eq:reduction6} that
\begin{align}
\label{eq:reduction7}
\widetilde{Err}'_1 
&\leq \frac{\delta}{8} \int_{\Omega} |\nabla (w {-} \widetilde w)|^2 \,dx
+ \frac{\widetilde{C}}{\delta} E[A,\mu|\A].
\end{align}

In summary, comparing our estimates~\eqref{eq:reduction1}--\eqref{eq:reduction5}
and \eqref{eq:reduction7} with the claim~\eqref{eq:estimate_R_dissip},
we see that we indeed reduced matters to an estimate of~\eqref{eq:reduction_argument}.

\textit{Step 2: Estimate for~\eqref{eq:reduction_argument}.}
Note that~\eqref{eq:div_xi} and~\eqref{eq:inv_diffeo_short} 
directly entail
\begin{align*}
H_{\p\A} + (\nabla\cdot\xi)\circ (\Psi^h)^{-1}
= -  \sum_{i=1}^{d-1} \frac{\big(\kappa^i_{\p\A}\big)^2}{1 {-} h \kappa^i_{\p\A}} h
 =: f_{h}
\quad\text{on } \p\A.
\end{align*}
Furthermore, the very definition of the Hilbert space
structure on~$H^{1/2}_{MS}(\p\A)$ yields
\begin{align*}
Err = \bigg|\Big\langle 
v^{(1)}_h - \dashint_{\p\A} v^{(1)}_h \,\dH[d-1], 
f_h {-} \dashint_{\p\A} f_h \,\dH[d-1] 
\Big\rangle_{H^{1/2}_{MS}(\p\A)}\bigg|.
\end{align*}
Hence, from the Cauchy--Schwarz inequality, the definition of the norm on~$H^{1/2}_{MS}(\p\A)$, i.e.\
$\|v^{(1)}_h - \dashint_{\p\A} v^{(1)}_h \,\dH[d-1]\|^2_{H^{1/2}_{MS}(\p\A)}
= \int_{\Omega} |\nabla v^{(1)}_h|^2 \,dx$,
the estimates~\eqref{eq:energyEstimate1}--\eqref{eq:energyEstimate2}
we infer
\begin{align}
\label{eq:reduction8}
Err &\leq \frac{\delta}{16} \int_{\Omega} |\nabla (w {-} \widetilde w)|^2 \,dx
+ \frac{\widetilde{C}}{\delta}\bigg\|f_h {-} \dashint_{\p\A} f_h\bigg\|^2_{H^{1/2}_{MS}(\p\A)}.
\end{align}
Denoting by $\mathrm{L}_{\p\A}$ the Weingarten tensor of~$\p\A$ 
(see, e.g., \cite[Equation~(2.7), page~47]{Pruess2016}), we note that in invariant notation
$f_h = \mathrm{trace}(\mathrm{L}^2_{\p\A}(\mathrm{Id} {-} h\mathrm{L}_{\p\A})^{-1}) h$,
so that together with the interpolation estimate~\eqref{eq:interpolation_estimate}
and the estimates~\eqref{eq:coercivity_Erel}--\eqref{eq:coercivity_Ebulk}
we get as an upgrade of~\eqref{eq:reduction8}
\begin{align}
\label{eq:reduction9}
Err &\leq \frac{\delta}{16} \int_{\Omega} |\nabla (w {-} \widetilde w)|^2 \,dx
+ \frac{\widetilde{C}}{\delta} E[A,\mu|\A]
+ \frac{\widetilde{C}}{\delta}\sqrt{E[A,\mu|\A]}\|\mathrm{Hess}_{\p\A}h\|_{L^2(\p\A)}.
\end{align}
Furthermore, by $L^2 $ Cald\'{e}ron--Zygmund theory ($\p\A$ is closed)
\begin{align}
\|\mathrm{Hess}_{\p\A}h\|_{L^2(\p\A)} \leq \widetilde{C}'\|\Delta_{\p\A} h\|_{L^2(\p\A)}.
\end{align} 
Since by~\eqref{eq:div_xi}, \eqref{eq:meanCurvatureInterfaceWeakSolution},
\eqref{eq:coercivity_Erel}--\eqref{eq:coercivity_Ebulk} and~\eqref{eq:transformedPDE1} 
\begin{align}
\|\Delta_{\p\A} h\|_{L^2(\p\A)} \leq \widetilde{C}''
\big(\|(w {-} \widetilde{w})_h\|_{L^2(\p\A)} +
\sqrt{E[A,\mu|\A]}\big) + \delta'\widetilde{C}''\|\mathrm{Hess}_{\p\A}h\|_{L^2(\p\A)},
\end{align}
where $\delta' \in (0,1)$ will be specified in a moment,
and since by~\eqref{eq:bound_averages}, \eqref{eq:Sobolev_Poincare_trace}
and~\eqref{eq:energyEstimate2}
\begin{align}
\|(w {-} \widetilde{w})_h\|_{L^2(\p\A)} 
\leq \widetilde{C}'''\big(\|\nabla(w {-} \widetilde{w})\|_{L^2(\Omega)}+ \sqrt{E[A,\mu|\A]}\big),
\end{align}
it follows from choosing~$\delta' \ll_{\widetilde{C}',\widetilde{C}''} 1$
and the previous four displays that
\begin{align}
\label{eq:reduction10}
Err &\leq \frac{\delta}{8} \int_{\Omega} |\nabla (w {-} \widetilde w)|^2 \,dx
+ \frac{\widetilde{C}''''}{\delta} E[A,\mu|\A].
\end{align}
Together with the estimates from Step~1, this concludes the
proof of~\eqref{eq:estimate_R_dissip}.
\end{proof}

\begin{proof}[Proof of Lemma~\ref{lem:geometric_identities0}]
The identity~\eqref{eq:xi} simply follows from~\eqref{eq:min_assumption3}
and $|\nabla s_{\p\A}| = 1$. The identities~\eqref{eq:grad_xi} and~\eqref{eq:div_xi}
in turn follow from~\eqref{eq:NablaProjection},
\begin{align}
\label{eq:DeltaNormal}
\nabla_{\p\A} \n_{\p\A} = - \sum_{i=1}^{d-1} \kappa_i \ta^{i}_{\p\A} \otimes \ta^{i}_{\p\A},
\end{align}
and the chain rule in the form of $\nabla \xi = (\nabla_{\p\A}\n_{\p\A} \circ P_{\p\A})\nabla P_{\p\A}$.
Since~\eqref{eq:CoareaFormulaTubularNeighborhoodDiffeo} is simply a consequence
of the coarea~formula applied to the tubular neighborhood diffeomorphism
\begin{align}
\label{eq:tub_nbhd_diffeo2}
\Phi\colon B_{\ell/2}(\p\A) \to \p\A \times (-\ell/2,\ell/2),
\quad x \mapsto (P_{\p\A}(x),s_{\p\A}(x)),
\end{align}
it only remains to prove~\eqref{eq:NablaProjection}. 

To this end, we first note that
\begin{align}
\label{eq:projection}
P_{\p\A}(x) = x - s_{\p\A}(x)\bar{\n}_{\p\A}(x),
\quad x \in B_\frac{\ell}{2}(\p\A),
\end{align}
and therefore
\begin{align}
\label{eq:grad_projection}
\nabla P_{\p\A} = \big(\mathrm{Id} - \bar{\n}_{\p\A} \otimes \bar{\n}_{\p\A}\big)
- s_{\p\A}\nabla \bar{\n}_{\p\A}
\quad \text{in } B_\frac{\ell}{2}(\p\A).
\end{align}
In particular, $\nabla P_{\p\A}$ is symmetric (recall that 
$\nabla \bar{\n}_{\p\A} = \mathrm{Hess} s_{\p\A}$), so that
by an application of the chain rule
(again in the form of $\nabla \bar{\n}_{\p\A} = (\nabla_{\p\A}\n_{\p\A} \circ P_{\p\A})\nabla P_{\p\A}$)
it follows that $(\bar{\ta}^{i}_{\p\A} \cdot \nabla) P_{\p\A} = \frac{1}{1 {-} s_{\p\A}\bar{\kappa}_i} \bar{\ta}^{i}_{\p\A}$.
Since furthermore $(\nabla P_{\p\A})^\mathsf{T}\bar{\n}_{\p\A} = (\bar{\n}_{\p\A} \cdot\nabla)P_{\p\A} = 0$,
we indeed obtain~\eqref{eq:NablaProjection}.
\end{proof}

\begin{proof}[Proof of Lemma~\ref{lem:geometric_identities}]
Thanks to Lemma~\ref{lem:geometric_identities0},
the formulas~\eqref{eq:grad_diffeo}--\eqref{eq:div_diffeo} 
are just a straightforward computation starting
from the definition~\eqref{eq:def_diffeo}.
The formulas~\eqref{eq:diffeo_short} and~\eqref{eq:inv_diffeo_short}
in turn directly follow from $\supp\bar\zeta \subset [-1/2,1/2]$, the definition~\eqref{eq:def_plateau_cutoff},
and~\eqref{eq:perturbative_regime1_again}. 
Furthermore, \eqref{eq:areaFormulaSurfaceIntegral}
is just the area formula applied to the map 
\begin{align}
\big.(\Psi^h)^{-1}\big|_{\p\A}\colon \p\A \to \p^*A.
\end{align}
Next, we claim that (in the sense of weak derivatives)
\begin{align}
\label{eq:GibbsThomsonHeightFunction}
\mathrm{tr}_{\p^*A}(w) \n_{\p^*A} = \nabla_{\p^*A} 
\cdot (\mathrm{Id} - \n_{\p^*A} \otimes \n_{\p^*A}).
\end{align}
Indeed, this is true thanks to the Gibbs--Thomson law~\eqref{WeakFormGibbsThomson}.
Since the right hand side of~\eqref{eq:GibbsThomsonHeightFunction}
is precisely the mean curvature vector of~$\p^*A$, the 
claims~\eqref{eq:normalInterfaceWeakSolution} 
and~\eqref{eq:meanCurvatureInterfaceWeakSolution} can therefore
be directly inferred from~\cite[Section~2.2]{Pruess2016}.
\end{proof}

\begin{proof}[Proof of Lemma~\ref{lem:error_control_perturbative_regime}]
The estimates~\eqref{eq:comparable_length}
simply follow from the area formula~\eqref{eq:areaFormulaSurfaceIntegral}
and~\eqref{eq:perturbative_regime1_again}.
For the proof of~\eqref{eq:coercivity_Erel},
note that~\eqref{eq:xi} and~\eqref{eq:normalInterfaceWeakSolution} imply
\begin{align}
(\n_{\p^*A} \cdot \xi) \cdot (\Psi^{h})^{-1} = \frac{1}
{\sqrt{1 {+} \sum_{i=1}^{d-1} \big(\frac{(\ta^{i}_{\p\A}\cdot\nabla_{\p\A})h}{1 {-} h\kappa^{i}_{\p\A}}\big)^2}},
\end{align}
which in turn entails~\eqref{eq:coercivity_Erel} due to~\eqref{eq:areaFormulaSurfaceIntegral}
and~\eqref{eq:perturbative_regime1_again}.
Finally, by the coarea formula~\eqref{eq:CoareaFormulaTubularNeighborhoodDiffeo}
as well as the assumptions~\eqref{eq:min_assumption4} and~\eqref{eq:perturbative_regime1_again}
\begin{align}
E_{vol}[A|\A] &= 
\int_{\p\A} \int_0^h  \frac{s}{\ell^2}
\prod_{i=1}^{d-1} (1 {-} s\kappa^i_{\p\A}) \,dsd\mathcal{H}^{d-1},
\end{align}
so that~\eqref{eq:coercivity_Ebulk} is now a direct consequence of~\eqref{eq:perturbative_regime1_again}.
\end{proof}

\begin{proof}[Proof of Lemma~\ref{lem:transformed_chemical_potentials}]
These assertions are immediate consequences of the well-known 
transformation formulas for PDEs in distributional form.
\end{proof}

\begin{proof}[Proof of Lemma~\ref{lem:bound_averages}]
We proceed in three steps.

\textit{Step 1: Proof of~\eqref{eq:bound_averages}, Part I.}
Adding zero, recalling~\eqref{eq:auxChemPotential2}, 
and exploiting the Gibbs--Thomson law~\eqref{WeakFormGibbsThomson}
with admissible test function~$\xi$, we obtain
\begin{align}
\nonumber
\int_{\p^* A \cap \Omega} w - \widetilde w \,\dH[d-1]
&= \int_{\p^* A \cap \Omega} w + \nabla\cdot\xi \,\dH[d-1] 
\\&
\nonumber
= \int_{\p^* A \cap \Omega} w \n_{\p^*A}\cdot\xi \,\dH[d-1]
+ \int_{\p^* A \cap \Omega} w (1 - \n_{\p^*A}\cdot\xi) \,\dH[d-1]
\\&~~~
\nonumber
+ \int_{\p^* A \cap \Omega} (\mathrm{Id} {-} \n_{\p^*A}\otimes \n_{\p^*A}):\nabla\xi \,\dH[d-1]
\\&~~~
\nonumber
+ \int_{\p^* A \cap \Omega} \n_{\p^*A}\otimes \n_{\p^*A}:\nabla\xi \,\dH[d-1]
\\& 
\label{eq:aux_averages1}
= \int_{\p^* A \cap \Omega} w (1 - \n_{\p^*A}\cdot\xi) \,\dH[d-1]
\\&~~~
\nonumber
+ \int_{\p^* A \cap \Omega} \n_{\p^*A}\otimes \n_{\p^*A}:\nabla\xi \,\dH[d-1].
\end{align}
Based on~\eqref{eq:xi} and~\eqref{eq:grad_xi}, we also deduce that along~$\p^*A$
\begin{align}
\nonumber
\n_{\p^*A}\otimes \n_{\p^*A}:\nabla\xi
&= - \sum_{i=1}^{d-1} \frac{\bar{\kappa}^{i}_{\p\A}}{1 {-} \bar{h}\bar{\kappa}^{i}_{\p\A}} 
(\n_{\p^*A}\cdot\bar{\ta}^{i}_{\p\A})^2
\\&
\label{eq:aux_averages2}
= (\nabla\cdot\xi) (1 - \n_{\p^*A}\cdot\xi)
\\&~~~ \nonumber
+ (\nabla\cdot\xi) \n_{\p^*A}\cdot\xi (1 - \n_{\p^*A}\cdot\xi)
\\&~~~ \nonumber
+ \sum_{i=1}^{d-1} \frac{\bar{\kappa}^{i}_{\p\A}}{1 {-} \bar{h}\bar{\kappa}^{i}_{\p\A}} 
\sum_{j=1,j\neq i}^{d-1} (\n_{\p^*A}\cdot\bar{\ta}^{j}_{\p\A})^2.
\end{align}
Hence, inserting~\eqref{eq:aux_averages2} back into~\eqref{eq:aux_averages1}
and adding two times another zero yields (recalling in the process also~\eqref{eq:auxChemPotential2}
and~\eqref{eq:div_xi})
\begin{align}
\nonumber
\int_{\p^* A \cap \Omega} w - \widetilde w \,\dH[d-1]
&= \int_{\p^* A \cap \Omega} (w - \widetilde w) (1 - \n_{\p^*A}\cdot\xi) \,\dH[d-1]
\\&~~~
\nonumber
+ \int_{\p^* A \cap \Omega} 
(\nabla\cdot\xi) \n_{\p^*A}\cdot\xi (1 - \n_{\p^*A}\cdot\xi) \,\dH[d-1]
\\&~~~
\nonumber
+ \int_{\p^* A \cap \Omega} \sum_{i=1}^{d-1} \frac{\bar{\kappa}^{i}_{\p\A}}{1 {-} \bar{h}\bar{\kappa}^{i}_{\p\A}} 
\sum_{j=1,j\neq i}^{d-1} (\n_{\p^*A}\cdot\bar{\ta}^{j}_{\p\A})^2 \,\dH[d-1]
\\&
\nonumber
= \bigg(\,\dashint_{\p^* A \cap \Omega} w - \widetilde w \,\dH[d-1]\bigg) E_{rel}[A,\mu|\A]
\\&~~~
\nonumber
+ \int_{\p^* A \cap \Omega} \Big((w - \widetilde w)
- \dashint_{\p^* A \cap \Omega} (w - \widetilde w) \,\dH[d-1]\Big) 
(1 - \n_{\p^*A}\cdot\xi) \,\dH[d-1]
\\&~~~
\nonumber
+ \int_{\p^* A \cap \Omega} 
(\nabla\cdot\xi) \n_{\p^*A}\cdot\xi (1 - \n_{\p^*A}\cdot\xi) \,\dH[d-1]
\\&~~~
\nonumber
+ \int_{\p^* A \cap \Omega} \sum_{i=1}^{d-1} \frac{\bar{\kappa}^{i}_{\p\A}}{1 {-} \bar{h}\bar{\kappa}^{i}_{\p\A}} 
\sum_{j=1,j\neq i}^{d-1} (\n_{\p^*A}\cdot\bar{\ta}^{j}_{\p\A})^2 \,\dH[d-1]
\\& 
\label{eq:aux_averages3}
=: I + II + III + IV.
\end{align}

By virtue of~\eqref{eq:comparable_length}, \eqref{eq:small_error}
and $\sum_{j=1,j\neq i}^{d-1} (\n_{\p^*A}\cdot\bar{\ta}^{j}_{\p\A})^2
\leq 2(1 - \n_{\p^*A}\cdot\xi)$
we then obtain for $M \gg 1$
\begin{align}
\label{eq:aux_averages4}
|I + III| &\leq \frac{1}{4} \bigg|\int_{\p^* A \cap \Omega} w - \widetilde w \,\dH[d-1]\bigg|
+ \frac{\widetilde{C}}{\ell} E_{rel}[A,\mu|\A],
\end{align}
so that we obtain the following upgrade of~\eqref{eq:aux_averages3}
\begin{align}
\label{eq:aux_averages5}
\bigg|\int_{\p^* A \cap \Omega} w - \widetilde w \,\dH[d-1]\bigg|
\leq \widetilde{C}|II| + \frac{\widetilde{C}}{\ell} E_{rel}[A,\mu|\A].
\end{align}

It remains to bound the contribution from~$II$, cf.\ \eqref{eq:aux_averages3}.
Because of $(1-n_{\p^*A}\cdot\xi)^2 \leq 4(1-n_{\p^*A}\cdot\xi)$ we get
from an application of Cauchy--Schwarz inequality
\begin{align*}
|II| &\leq 2\bigg(\int_{\p^*A\cap\Omega} \Big| (w{-}\widetilde w) - 
\dashint_{\p^*A\cap\Omega} (w {-}\widetilde{w}) \,\dH[d-1]
\Big|^2\,\dH[d-1]\bigg)^\frac{1}{2} E_{rel}^\frac{1}{2}[A,\mu|\A],
\end{align*}
so that by adding zero we obtain
\begin{align}
\nonumber
&|II| 
\\&
\nonumber
\leq 2\bigg(\int_{\p^*A\cap\Omega} \Big| (w{-}\widetilde w) - 
\dashint_{\p\A} (w {-}\widetilde{w})_h \,\dH[d-1]
\Big|^2\,\dH[d-1]\bigg)^\frac{1}{2}E_{rel}^\frac{1}{2}[A,\mu|\A]
\\&~~~
\nonumber
+ 2\sqrt{\mathcal{H}^{d-1}(\p^*A\cap\Omega)}
\bigg|\,\dashint_{\p^*A\cap\Omega} (w {-}\widetilde{w}) \,\dH[d-1]
- \dashint_{\p\A} (w {-}\widetilde{w})_h \,\dH[d-1] \bigg| E_{rel}^\frac{1}{2}[A,\mu|\A]
\\&
\nonumber
\leq 2\bigg(\int_{\p^*A\cap\Omega} \Big| (w{-}\widetilde w) - 
\dashint_{\p\A} (w {-}\widetilde{w})_h \,\dH[d-1]
\Big|^2\,\dH[d-1]\bigg)^\frac{1}{2} E_{rel}^\frac{1}{2}[A,\mu|\A]
\\&~~~
\nonumber
+ 2\frac{\sqrt{\mathcal{H}^{d-1}(\p^*A\cap\Omega)}}{\mathcal{H}^{d-1}(\p\A)}
\bigg|\int_{\p^*A\cap\Omega} (w {-} \widetilde{w}) \,\dH[d-1]
- \int_{\p\A} (w {-} \widetilde{w})_h \,\dH[d-1]\bigg| E_{rel}^\frac{1}{2}[A,\mu|\A]
\\&~~~
\nonumber
+ 2\sqrt{\mathcal{H}^{d-1}(\p^*A\cap\Omega)} \bigg|
\frac{1}{{\mathcal{H}^{d-1}(\p^*A\cap\Omega)}}-\frac{1}{{\mathcal{H}^{d-1}(\p\A)}}\bigg|
E_{rel}^\frac{1}{2}[A,\mu|\A]
\bigg|\int_{\p^*A\cap\Omega}  (w{-}\widetilde w) \,\dH[d-1]\bigg| 
\\&
\label{eq:aux_averages6}
=: II' + II'' + II'''.
\end{align}

We estimate term by term. For the first term, we simply make use
of the area formula~\eqref{eq:areaFormulaSurfaceIntegral}
(abbreviating in the following the associated area factor by~$\mathcal{J}^{d-1}_h$), 
the Sobolev--Poincar\'{e} trace inequality~\eqref{eq:Sobolev_Poincare_trace}
as well as the estimate~\eqref{eq:energyEstimate2} to deduce
\begin{align}
\nonumber
II' &\leq
\widetilde{C} \bigg(\int_{\p\A} \Big| (w{-}\widetilde w)_h - 
\dashint_{\p\A} (w {-}\widetilde{w})_h \,\dH[d-1]
\Big|^2\,\dH[d-1]\bigg)^\frac{1}{2} E_{rel}^\frac{1}{2}[A,\mu|\A]
\\&
\label{eq:aux_averages7}
\leq \widetilde{C} \sqrt{\mathcal{H}^{d-1}(\p\A)} C_{PS}(\A) E_{rel}^\frac{1}{2}[A,\mu|\A] 
\bigg(\int_{\Omega} |\nabla(w{-}\widetilde{w})|^2\,dx\bigg)^\frac{1}{2}.
\end{align}
For an estimate of the second term, we first recognize
through an application of the area formula~\eqref{eq:areaFormulaSurfaceIntegral}
(abbreviating in the following the associated area factor by~$\mathcal{J}^{d-1}_h$) 
and adding zero that
\begin{align}
\nonumber
&\bigg|\int_{\p^*A\cap\Omega} (w {-} \widetilde{w}) \,\dH[d-1]
- \int_{\p\A} (w {-} \widetilde{w})_h \,\dH[d-1]\bigg|
\\&
\nonumber
= \bigg|\int_{\p\A} \big(\mathcal{J}^{d-1}_h - 1\big) 
(w {-} \widetilde{w})_h \,\dH[d-1] \bigg|
\\&
\label{eq:aux_averages8}
\leq \bigg|\int_{\p\A} \big(\mathcal{J}^{d-1}_h - 1\big) 
\Big((w {-} \widetilde{w})_h - \dashint_{\p\A} (w {-} \widetilde{w})_h\,\dH[d-1]\Big) \,\dH[d-1] \bigg|
\\&~~~
\nonumber
+ \frac{1}{\mathcal{H}^{d-1}(\p\A)} \bigg(\int_{\p\A} 
\Big|\mathcal{J}^{d-1}_h {-} 1\Big|\,\dH[d-1]\bigg)
%\\&~~~~~~~~~~~~~~~~~~~~~~~~~~~
%\nonumber
%\times 
\bigg|\int_{\p^*A\cap\Omega} (w {-} \widetilde{w}) \,\dH[d-1]
- \int_{\p\A} (w {-} \widetilde{w})_h \,\dH[d-1]\bigg|
\\&~~~
\nonumber
+ \frac{1}{\mathcal{H}^{d-1}(\p\A)} \bigg(\int_{\p\A} \Big|\mathcal{J}^{d-1}_h - 1\Big|\,\dH[d-1]\bigg)
\bigg|\int_{\p^*A\cap\Omega} (w {-} \widetilde{w}) \,\dH[d-1] \bigg|.
\end{align}
Furthermore, thanks to the estimates~\eqref{eq:coercivity_Erel}--\eqref{eq:coercivity_Ebulk},
a Taylor expansion, the assumption~\eqref{eq:perturbative_regime1_again}, and Jensen's inequality
\begin{align}
\label{eq:aux_averages9}
\int_{\p\A} \Big|\mathcal{J}^{d-1}_h - 1\Big|\,\dH[d-1]
\leq \widetilde{C} \sqrt{\mathcal{H}^{d-1}(\p\A)} \sqrt{E[A,\mu|\A]}.
\end{align}
Similarly,
\begin{align}
\label{eq:aux_averages10}
\bigg(\int_{\p\A} \Big|\mathcal{J}^{d-1}_h - 1\Big|^2\,\dH[d-1]\bigg)^\frac{1}{2}
\leq \widetilde{C} \sqrt{E[A,\mu|\A]}.
\end{align}
Hence, inserting the estimates~\eqref{eq:aux_averages9} and~\eqref{eq:aux_averages10}
back into~\eqref{eq:aux_averages8} (the second after an application of Cauchy--Schwarz inequality
in the first right hand side term of~\eqref{eq:aux_averages8})
and recalling the argument for~\eqref{eq:aux_averages7},
we may upgrade~\eqref{eq:aux_averages8} to
\begin{align}
\nonumber
&\bigg|\int_{\p^*A\cap\Omega} (w {-} \widetilde{w}) \,\dH[d-1]
- \int_{\p\A} (w {-} \widetilde{w})_h \,\dH[d-1]\bigg|
\\&
\label{eq:aux_averages11}
\leq \widetilde{C} \sqrt{\mathcal{H}^{d-1}(\p\A)} C_{PS}(\A) E_{rel}^\frac{1}{2}[A,\mu|\A] 
\bigg(\int_{\Omega} |\nabla(w{-}\widetilde{w})|^2\,dx\bigg)^\frac{1}{2}
\\&~~~
\nonumber
+ \widetilde{C} \frac{\sqrt{E[A,\mu|\A]}}{\sqrt{\mathcal{H}^{d-1}(\p\A)}}
\bigg|\int_{\p^*A\cap\Omega} (w {-} \widetilde{w}) \,\dH[d-1]
- \int_{\p\A} (w {-} \widetilde{w})_h \,\dH[d-1]\bigg|
\\&~~~
\nonumber
+ \widetilde{C} \frac{\sqrt{E[A,\mu|\A]}}{\sqrt{\mathcal{H}^{d-1}(\p\A)}}
\bigg|\int_{\p^*A\cap\Omega} (w {-} \widetilde{w}) \,\dH[d-1] \bigg|.
\end{align}
Overall, it follows now from~\eqref{eq:aux_averages11}, \eqref{eq:comparable_length},
and assumption~\eqref{eq:small_error} that for $M \gg 1$
\begin{align}
\label{eq:aux_averages12}
II'' &\leq \widetilde{C} C_{PS}(\A) E_{rel}[A,\mu|\A] 
\bigg(\int_{\Omega} |\nabla(w{-}\widetilde{w})|^2\,dx\bigg)^\frac{1}{2}
\\&~~~
\nonumber
+ \frac{1}{4} \bigg|\int_{\p^*A\cap\Omega} (w {-} \widetilde{w}) \,\dH[d-1] \bigg|.
\end{align}
It follows also immediately from \eqref{eq:comparable_length},
and assumption~\eqref{eq:small_error} that for $M \gg 1$
\begin{align}
\label{eq:aux_averages13}
II''' &\leq \frac{1}{4} \bigg|\int_{\p^*A\cap\Omega} (w {-} \widetilde{w}) \,\dH[d-1] \bigg|.
\end{align}

Hence, plugging the estimates~\eqref{eq:aux_averages7}, \eqref{eq:aux_averages12} and~\eqref{eq:aux_averages13}
back into~\eqref{eq:aux_averages6}, we obtain as an upgrade of~\eqref{eq:aux_averages6}
\begin{align}
\label{eq:aux_averages14}
|II| &\leq 
\widetilde{C} \sqrt{\mathcal{H}^{d-1}(\p\A)} C_{PS}(\A) E_{rel}^\frac{1}{2}[A,\mu|\A] 
\bigg(\int_{\Omega} |\nabla(w{-}\widetilde{w})|^2\,dx\bigg)^\frac{1}{2}
\\&~~~
\nonumber
+ \widetilde{C} C_{PS}(\A) E_{rel}[A,\mu|\A] 
\bigg(\int_{\Omega} |\nabla(w{-}\widetilde{w})|^2\,dx\bigg)^\frac{1}{2}
\\&~~~
\nonumber
+ \frac{1}{2} \bigg|\int_{\p^*A\cap\Omega} (w {-} \widetilde{w}) \,\dH[d-1] \bigg|.
\end{align}
In view of~\eqref{eq:aux_averages5} and assumption~\eqref{eq:small_error}, 
it therefore remains to choose $M \gg 1$ to conclude with~\eqref{eq:bound_averages}
in terms of the average $\dashint_{\p^*A\cap\Omega} w {-} \widetilde w \,\dH[d-1]$.

\textit{Step 2: Proof of~\eqref{eq:bound_averages}, Part II.}
Decomposing
\begin{align*}
\int_{\p\A} (w - \widetilde{w})_h \,\dH[d-1] 
&= \int_{\p^*A\cap\Omega} (w {-} \widetilde{w}) \,\dH[d-1]
\\&~~~
+ \int_{\p\A} \bigg( (w {-} \widetilde{w}) - \dashint_{\p\A} (w {-} \widetilde{w}) \,\dH[d-1]\bigg)
\big(1 - \mathcal{J}^{d-1}_h\big) \,\dH[d-1]
\\&~~~
+ \bigg(\,\dashint_{\p\A} (w {-} \widetilde{w}) \,\dH[d-1] \bigg) 
\int_{\p\A} \big(1 - \mathcal{J}^{d-1}_h\big) \,\dH[d-1],
\end{align*}
we may infer the asserted estimate~\eqref{eq:bound_averages}
for $\dashint_{\p\A} (w - \widetilde{w})_h \,\dH[d-1]$ from
the corresponding estimate for $\dashint_{\p^*A\cap\Omega} (w {-} \widetilde w) \,\dH[d-1]$
from Step~1 and the arguments used to derive~\eqref{eq:aux_averages14}. 

\textit{Step 3: Proof of~\eqref{eq:average_B}.}
First, we simply recognize that by means of~\eqref{eq:auxChemPotential1}--\eqref{eq:auxChemPotential3}
\begin{align*}
\int_{\p^*A\cap\Omega} \n_{\p^*A}\cdot\jump{\nabla\widetilde{w}} \,\dH[d-1] = 0.
\end{align*}
Second, by assumption~\eqref{eq:min_assumption9}, the 
area formula~\eqref{eq:CoareaFormulaTubularNeighborhoodDiffeo}
(abbreviating in the following the associated coarea factor by~$\mathcal{C}^{d-1}_h$), 
and the splitting $\n_{\p^*\A} = (\n_{\p^*\A}\cdot\bar{\n}_{\p\A})\bar{\n}_{\p\A}
+ (\mathrm{Id} {-} \bar{\n}_{\p\A} \otimes \bar{\n}_{\p\A}) \n_{\p^*\A}$, we obtain
\begin{align*}
\int_{\p^*A\cap \Omega} \n_{\p^*A}\cdot B\,\dH[d-1]
&= \int_{\p\A} \bar{\n}_{\p\A} \cdot \jump{\nabla\bar{u}}
\mathcal{C}^{d-1}_h \,\dH[d-1]
\\&
= \int_{\p\A} \bar{\n}_{\p\A} \cdot \jump{\nabla\bar{u}}
\big(\mathcal{C}^{d-1}_h - 1\big) \,\dH[d-1].
\end{align*}
Hence, 
\begin{align*}
\bigg| \int_{\p^*A\cap \Omega} \n_{\p^*A}\cdot B\,\dH[d-1] \bigg|
\leq \widetilde{C} \big\|\n_{\p\A}\cdot\jump{\nabla\bar{u}}\big\|_{L^\infty(\p\A)} 
\sqrt{\mathcal{H}^{d-1}(\p\A)} \sqrt{E_{vol}[A|\A]}, 
\end{align*}
so that the claim~\eqref{eq:average_B} follows from the previous three displays.
\end{proof}

\begin{proof}[Proof of Lemma~\ref{lem:regularity_auxiliary_chemical_potential}]
By a change of variables~$\Psi^h$, it follows from~\eqref{eq:auxChemPotential1}--\eqref{eq:auxChemPotential3} 
and~\eqref{eq:div_xi}
\begin{align}
\label{eq:auxTransformedChemPotential1}
- \nabla\cdot (a^h\nabla \widetilde w_h) &= 0 
&& \text{in } \Omega \setminus \p\A,
\\ \label{eq:auxTransformedChemPotential2}
\mathrm{tr}_{\p\A}\widetilde w_h &= -(\nabla\cdot\xi)\circ(\Psi^h)^{-1}
&& \text{on } \p\A \cap \Omega,
\\ \label{eq:auxTransformedChemPotential3}
(\n_{\p\Omega} \cdot \nabla) \widetilde w_h &= 0
&& \text{on } \p\Omega.
\end{align} 
Hence, the regularity estimate~\eqref{eq:Schauder_estimate1} is
just a simple consequence of standard Schauder theory applied to the two regular open sets
$\Omega\setminus\overline{\A}$ and $\A$ (see, e.g., \cite[Theorem~5.21]{Giaquinta2012}). 
The corresponding estimates
depend on the respective domain, the ellipticity constant of~$a^h$, the $C^{0,1/2}$ H\"{o}lder norm
of~$a^h$ on the closure of the respective domain, and finally the $C^{1,1/2}$ H\"{o}lder norm of the 
Dirichlet data $-(\nabla\cdot\xi)\circ(\Psi^h)^{-1}$.
We therefore only need to ensure that this data is bounded by a constant of required form~$C=C(\A,\Lambda)$.
However, recalling the formulas~\eqref{eq:grad_diffeo}, \eqref{eq:div_diffeo}
and $a^h = \frac{1}{|\det \Psi^h|}(\nabla\Psi^h)^\mathsf{T}\nabla\Psi^h$,
this claim follows since $\|h\|_{C^{1,1/2}(\p\A)} \leq C(\A,\Lambda)$
due to~\eqref{eq:regularity_graph}.
\end{proof}

\begin{proof}[Proof of Lemma~\ref{lem:energy_estimates}]
The estimate~\eqref{eq:energyEstimate2} follows from the chain rule
and assumption~\eqref{eq:perturbative_regime1_again}. The estimate~\eqref{eq:energyEstimate1}
in turn follows from the decomposition~$(w-\widetilde{w})_h=v^{(1)}_h+v^{(2)}_h$
and the energy estimate for~$v^{(2)}_h$ satisfying the PDE
\begin{align}
\label{eq:transformedPDE100}
\Delta v^{(2)}_h &= 
\nabla\cdot\big((\mathrm{Id}-a^h)\nabla (w - \widetilde{w})_h\big)
&& \text{in } \Omega \setminus \p\A,
\end{align}
with boundary conditions
\begin{align}
\mathrm{tr}_{\p\A} v^{(2)}_h &= 0
&& \text{on } \p\A,
\\
(\n_{\p\Omega}\cdot\nabla)v^{(2)}_h &= 0 
&& \text{on } \p\Omega.
\end{align}

For a proof of~\eqref{eq:auxEstimate}, we first note that by definition of~$a^h$
it holds $\supp (\mathrm{Id} - a^h) \subset B_{\ell/2}(\p\A)$. Hence, by means of the
coarea formula~\eqref{eq:CoareaFormulaTubularNeighborhoodDiffeo}, we deduce~\eqref{eq:auxEstimate}
from Cauchy--Schwarz inequality, \eqref{eq:Schauder_estimate1},
$|\mathrm{Id} - a^h| \leq \widetilde{C} (|\nabla_{\p\A}h| 
+ \max_{i=1,\ldots,d{-}1}|\kappa^{i}_{\p\A} h|)$ in $B_{\ell/2}(\p\A)$,
and the estimates~\eqref{eq:coercivity_Erel}--\eqref{eq:coercivity_Ebulk}.
\end{proof}

\begin{proof}[Proof of Lemma~\ref{lem:interpolation_estimate}]
We argue by contradiction that there exists $C=C(\A,\Omega)$
such that for all $f \in H^2(\p\A)$ with $\dashint_{\p\A} f \,\dH[d-1] = 0$
it holds
\begin{align}
\label{eq:auxInterpolation}
\|f\|_{H^{1/2}_{MS}(\p\A)} \leq C\|f\|_{[H^1(\p\A),H^2(\p\A)]_{\frac{1}{2}}},
\end{align}
where $[H^1,H^2(\p\A)]_{\frac{1}{2}} = H^{\frac{3}{2}}(\p\A)$ denotes the (complex) interpolation
space with parameter $\theta=\frac{1}{2}$ between $H^1(\p\A)$ and $H^2(\p\A)$. 
Of course, \eqref{eq:auxInterpolation} implies the claim of Lemma~\ref{lem:interpolation_estimate}.

For each $k \in \mathbb{N}$,
assume that we may find a map $f_k \in H^2(\p\A)$
satisfying $\dashint_{\p\A} f_k \,\dH[d-1] = 0$
such that
\begin{align}
\label{eq:assump_contradiction}
\|\nabla u_k\|_{L^2(\Omega)}
= \|f_k\|_{H^{1/2}_{MS}(\p\A)} = 1 > k\|f_k\|_{[H^1(\p\A),H^2(\p\A)]_{\frac{1}{2}}},
\end{align} 
where $u_k \in H^1(\Omega)$ denotes the associated chemical potential,
i.e., 
\begin{align*}
\Delta u_k &= 0 &&\text{in } \Omega\setminus\p\A,
\\
u_k &= f_k &&\text{on } \p\A,
\\
(\n_{\p\Omega}\cdot\nabla)u_k &= 0 &&\text{on } \p\Omega. 
\end{align*}
We deduce from the elliptic regularity estimate
\begin{align*}
\|u_k\|_{H^2(\Omega\setminus\p\A)}
\lesssim_{\A,\Omega} \|u_k\|_{H^1(\Omega)} + \|f_k\|_{[H^1(\p\A),H^2(\p\A)]_{\frac{1}{2}}}
\end{align*}
and Poincar\'{e} inequality (recall that $\dashint_{\p\A} u_k \,\dH[d-1]=0$
by construction) that
\begin{align*}
\sup_{k \in \mathbb{N}} \|u_k\|_{H^2(\Omega\setminus\p\A)} \lesssim_{\A,\Omega} 1.
\end{align*}
Hence, modulo taking a subsequence, 
$(u_k)_k$ weakly converges in $H^2(\Omega\setminus\p\A)$
to some $u \in H^2(\Omega\setminus\p\A) \cap H^1(\Omega)$ such that 
$u_k \to u$ strongly in $H^1(\Omega)$.
However, by~\eqref{eq:assump_contradiction},
$f_k \to 0$ strongly in $[H^{1}(\p\A),H^2(\p\A)]_{\frac{1}{2}}$, so that 
$u$ satisfies
\begin{align*}
\Delta u &= 0 &&\text{in } \Omega\setminus\p\A,
\\
u &= 0 &&\text{on } \p\A,
\\
(\n_{\p\Omega}\cdot\nabla)u &= 0 &&\text{on } \p\Omega. 
\end{align*}
Hence, $u \equiv 0$ in contradiction to $\lim_{k\to\infty} 
\|\nabla u_k\|_{L^2(\Omega)} = 1 = \|\nabla u\|_{L^2(\Omega)}$.
\end{proof}

\begin{proof}[Proof of Proposition~\ref{prop:stability_estimate_nonlocal_terms}, 
							Part II: Estimates~\eqref{eq:estimate_A_dissip1} and~\eqref{eq:estimate_A_dissip2}]
The argument is naturally a very close variant
of the argument in favor of~\eqref{eq:estimate_R_dissip}.
We leave details to the interested reader.
%, only mentioning
%for a proof of~\eqref{eq:estimate_A_dissip2} that
%\begin{align*}
%%\|\nabla(\widetilde{w}_\vartheta)_h\|^2_{L^2(\Omega)}
%%&=
%\Big\|\vartheta_h - \dashint_{\p\A}\vartheta_h\,\dH[d-1]\Big\|^2_{H^{1/2}_{MS}(\p\A)}
%%\\&
%\leq \widetilde{C}C^2_{int}(\A,\Omega,\Lambda)\frac{1}{\ell^4}
%\Big(\frac{1}{\ell^2}\|h\|^2_{L^2(\p\A)} + \|h'\|^2_{L^2(\p\A)}\Big) 
%\end{align*}
%and
%\begin{align*}
%\bigg|\int_{\p\A} (\widetilde{w}_\vartheta)_h \,\dH[d-1]\bigg| 
%&= \bigg|\int_{\p\A} \vartheta_h \,\dH[d-1]\bigg|
%\leq \frac{\sqrt{\mathcal{H}^{d-1}(\p\A)}}{\ell^2} \|h\|_{L^2(\p\A)}.  
%\end{align*}
%This eventually concludes the proof of Proposition~\ref{prop:stability_estimate_nonlocal_terms}.
\end{proof}

%%%%%%%%%%%%%%%%%%%%%%%%%%%%%%%%%%%%%%%%%%%%%%%%%%%%%%%%%%%%%%%%%%%%%%%%%%%%%%%%%%%%%%%%%%%%%%%%
\section{Proof of Proposition~\ref{prop:perturbative_graph_regime}: Reduction to perturbative graph setting}

\subsection{Strategy for the proof of Proposition~\ref{prop:perturbative_graph_regime}}
The proof of Proposition~\ref{prop:perturbative_graph_regime} is divided into
several steps which are collected and explained here. The corresponding proofs are 
presented in the next subsection.

First, let us fix the setting for the whole section.
Fix $T' \in (0,T_*)$, $C \in (1,\infty)$, and let 
$\Lambda = \Lambda(\mathscr{A},A(0),T') \in (0,\infty)$ the constant 
from Lemma~\ref{lem:stability_estimate_bad_times}. 
We aim to find $M=M(\Omega,A(0),\mathscr{A},C,\Lambda,T') \in (1,\infty)$ such that
for a.e.\ $t \in \Tgood(\Lambda,M) \cap (0,T')$, i.e.,
\begin{align}
\label{def:goodTimesRepeated1}
\int_{\Omega} \frac{1}{2}|\nabla w(\cdot,t)|^2 \,dx &\leq \Lambda,
\\
\label{def:goodTimesRepeated2}
E[A,\mu|\A](t) &\leq \frac{1}{M} \ell(t)^{d-1},
\end{align}
the conclusions of Proposition~\ref{prop:perturbative_graph_regime} hold true,
see~\eqref{eq:regularity_graph}--\eqref{eq:perturbative_regime1}. For what
follows, we restrict ourselves to the subset of full measure in $(0,T')$
such that all the a.e.\ properties of Definition~\ref{DefinitionWeakSolution}
(in particular, the ones of \cite[Definition~3]{Hensel2022}) are satisfied,
and fix $t \in (0,T')$ being an element of this subset. It will be convenient 
to trivially extend the oriented varifold~$\mu_t$ to an element of~$\mathrm{M}(\Rd[d]{\times}\mathbb{S}^{d-1})$
as well as the vector field~$\xi(\cdot,t)$ to a map $\Rd[d]\to\Rd[d]$
by defining 
\begin{align}
\label{eq:extensionVarifold}
|\mu_t|_{\mathbb{S}^{d-1}} &:= 0 \text{ and } \xi(\cdot,t) := 0
&&\text{in } \Rd[d]\setminus\overline{\Omega}. 
\end{align}
For readability, we suppress from now on the dependence of all quantities on~$t$ in the notation.

Heuristically, the idea for our proof of Proposition~\ref{prop:perturbative_graph_regime} 
is based on the competition between two effects:
\begin{itemize}[leftmargin=0.7cm]
\item
On one side, the assumptions~\eqref{def:goodTimesRepeated1}--\eqref{def:goodTimesRepeated2}
will allow us to prove that around every point $x_0 \in \supp\mu$, the rectifiable set $\supp\mu$
can be represented as a graph within a ball $B_{\rho}(x_0)$
(parametrized over a suitable affine subspace), where crucially the scale $\rho$
can be chosen independently of a given point $x_0 \in \supp\mu$
(more precisely, $\rho \ll \ell$ uniformly on $[0,T']$).
Naturally, the important building block here is Allard's regularity theory,
and the assumptions~\eqref{def:goodTimesRepeated1}--\eqref{def:goodTimesRepeated2}
(in particular, the tuning of the constant~$M$)
are used to show that one is in the corresponding setting.
\item
On the other side, based on the %(yet to be determined) 
coercivity
of the overall error~$E[A,\mu|\A]$,
any unwanted feature of~$\supp\mu$ contradicting the 
assertions of Proposition~\ref{prop:perturbative_graph_regime}
(e.g., non-graphical components of~$\p^*A$ as screened from~$\p\A$ in normal direction,
or in our case also intersections of~$\p^*A$ with~$\p\Omega$)
can be shown to be of 
arbitrarily small mass, contradicting
a lower bound on the mass which one may derive from the local graph property
at scale~$\rho$.
%For the implementation of this contradiction argument,
%it will be convenient to exploit that the reduced boundary of a set of finite perimeter in~$\Rd[d]$
%can always be decomposed into a countable family of $\mathcal{H}^{d-1}$-rectifiable 
%\textcolor[rgb]{1,0,0}{Jordan boundaries (a notion which we will make precise below).}
\end{itemize}

For the rigorous argument, we ensure in a first step
that our assumption~\eqref{def:goodTimesRepeated1} implies
a curvature bound. 

\begin{lemma}[Curvature estimate up to the boundary]
\label{lem:curvatureControl}
There exists
$\vec{H}\colon\Rd[d]\to\Rd[d]
\in L^1_{loc}(\Rd[d];d|\mu|_{\mathbb{S}^{d-1}})$,
$\supp\vec{H}\subset\overline{\Omega}$,
being the generalized mean curvature
vector of~$\mu$ with respect to tangential variations, i.e., it holds
\begin{align}
\label{eq:genMeanCurvatureTangential}
\int_{\Rd[d]{\times}\mathbb{S}^{d-1}} (\mathrm{Id} {-} \pvec\otimes \pvec):\nabla B \,d\mu(\cdot,\pvec)
= -\int_{\Rd[d]} \vec{H} \cdot B \,d|\mu|_{\mathbb{S}^{d-1}}
\end{align}
for all $B \in C^1(\overline{\Omega};\Rd[d])$ with $n_{\p\Omega}\cdot B \equiv 0$
along $\p\Omega$.
Furthermore, for each 
\begin{align*}
p \in
\begin{cases}
[1,\infty) &\text{if } d = 2, \\
[1,4] &\text{if } d = 3,
\end{cases}
\end{align*}
and $\Lambda \in (0,\infty)$, there exists 
\begin{align}
\label{def:curvatureBound}
C_{\Lambda}:=C(\Omega,p,A(0),T,\Lambda) \in (0,\infty)
\end{align}
such that the assumption~\eqref{def:goodTimesRepeated1} implies
\begin{align}
\label{eq:integrability_curvarture}
\|\vec{H}\|_{L^p(\Rd[d];|\mu|_{\mathbb{S}^{d-1}})} 
\leq \|w\|_{L^p(\overline{\Omega};|\mu|_{\mathbb{S}^{d-1}})} 
\leq C_{\Lambda}. 
\end{align}
\end{lemma}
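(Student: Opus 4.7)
The plan is to read off $\vec H$ directly from the Gibbs--Thomson law~\eqref{WeakFormGibbsThomson} by converting its right-hand side into an integral against $|\mu_t|_{\mathbb{S}^1}$, and then to transfer the ambient $H^1$ control on $w$ provided by~\eqref{def:goodTimesRepeated1} and~\eqref{eq:L2ControlChemPotential} to an $L^p$ bound on $\supp|\mu_t|_{\mathbb{S}^1}$ via a Schätzle-type Sobolev estimate.

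First I would rewrite the right-hand side of~\eqref{WeakFormGibbsThomson}. Since $A(t)$ is a set of finite perimeter with $\p^*A(t)\cap\Omega = \p A(t)\cap\Omega$ and $wB\in W^{1,2}(\Omega;\mathbb{R}^2)$, the Gauss--Green formula for BV sets together with the boundary assumption $B\cdot\n_{\p\Omega}\equiv 0$ on $\p\Omega$ (which kills any flux through $\p^*A\cap\p\Omega$) yields
\begin{align*}
\int_{A(t)} \nabla\cdot(wB)\,dx = -\int_{\p^*A(t)\cap\Omega} w\,(B\cdot\n_{\p A})\,d\mathcal{H}^1,
\end{align*}
the sign reflecting the convention $\nabla\chi_{A(t)} = \n_{\p A}\mathcal{H}^1\llcorner(\p^*A\cap\Omega)$ coming from $\n_{\p A}$ pointing into $A$. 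Using~\eqref{def:multiplicity} to replace $\mathcal{H}^1\llcorner(\p^*A\cap\Omega)$ by $\varrho_t |\mu_t|_{\mathbb{S}^1}\llcorner\Omega$ and setting
\begin{align*}
\vec H := \varrho_t\, w\,\n_{\p A}\,\chi_{\p^*A(t)\cap\Omega},
\end{align*}
extended by zero on $\mathbb{R}^2\setminus\overline\Omega$, identity~\eqref{eq:genMeanCurvatureTangential} is immediate and $\supp\vec H\subset\overline\Omega$ is built in.

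For the quantitative bounds, since $0\leq\varrho_t\leq 1$ and $|\n_{\p A}|=1$, one has the pointwise estimate $|\vec H|\leq |w|\chi_{\p^*A\cap\Omega}$, which gives the first inequality in~\eqref{eq:integrability_curvarture} at once. For the second inequality, assumption~\eqref{def:goodTimesRepeated1} combined with~\eqref{eq:L2ControlChemPotential} bounds $\|w\|_{H^1(\Omega)}$ by a constant depending only on $\Omega$, $A(0)$, $T$ and $\Lambda$. To convert this ambient Sobolev bound into an $L^p$ estimate against $|\mu_t|_{\mathbb{S}^1}$ I invoke Schätzle's estimate~\cite{Schaetzle2001}: for an integer $1$-rectifiable varifold in $\mathbb{R}^2$ whose generalized mean curvature vector is the trace of an ambient $H^1$ function, one has $\|w\|_{L^p(|\mu_t|_{\mathbb{S}^1})}\leq C(\Omega,p)\|w\|_{H^1(\Omega)}$ for every $p\in[2,\infty)$. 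This step is exactly where the integer rectifiability of $\mu_t$ and the structure of its first variation just identified enter.

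The main technical hurdle is verifying that Schätzle's estimate is applicable in our setup: it is classically phrased for test vector fields compactly supported in the ambient domain, whereas our Gibbs--Thomson law only controls variations tangential to $\p\Omega$. I would resolve this by splitting $\supp|\mu_t|_{\mathbb{S}^1}$ into its portion inside $\Omega$ (where the interior estimate applies directly after a standard cut-off) and its portion meeting $\p\Omega$ (where a reflection argument across the $C^2$-boundary together with the Neumann condition~\eqref{eq:harmonicity2} for $w$ produces an auxiliary varifold in a neighborhood of $\p\Omega$ carrying the same mean-curvature structure, allowing the interior estimate to be reused).
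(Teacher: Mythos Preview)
Your proposal is correct and follows the same route as the paper's proof, which simply defers to \cite{Hensel2022} (Definition~3, item~iii) and identity~(17h) for the existence of $\vec H$ and the first variation formula, and Proposition~5, (26), Definition~3 item~iv) there for the Sch\"{a}tzle-type $L^p$ trace estimate). You have unpacked precisely these ingredients; the one step you pass over quickly---the Gauss--Green identity for $wB$ with $w\in H^1(\Omega)$ against a set of finite perimeter and the resulting identification of the trace of $w$ on $\partial^*A$---is part of the framework established in \cite{Hensel2022}.
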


Based on this, our argument then relies on 
Allard's regularity theory for integer rectifiable varifolds with free boundary
(cf.\ \cite{Simon1983} and~\cite{Grueter1986}).
This theory is written down in the language of general varifolds
and not oriented ones. However, for the oriented 
varifold~$\mu \in \mathrm{M}(\Rd[d]{\times}\mathbb{S}^{d-1})$
one may always 
canonically associate a general
varifold $\widehat{\mu}\in\mathrm{M}(\Rd[d]{\times}\mathbf{G}(d,d{-}1))$,
where~$\mathbf{G}(d,d{-}1)$ denotes the space of $(d{-}1)$-dimensional linear subspaces of~$\Rd[d]$,
by means of
\begin{align}
\label{eq:genVarifold}
\int_{\Rd[d]{\times}\mathbb{G}(d,d{-}1)} \varphi(x,\widehat{T}) \,d\widehat{\mu}(x,\widehat{T})
&:= \int_{\Rd[d]{\times}\mathbb{S}^{d-1}} \varphi(x,\widehat{T}_\pvec) \,d\mu(x,\pvec),
\\ \notag
\varphi &\in C_{cpt}(\Rd[d]{\times}\mathbf{G}(d,d{-}1)),
\end{align}
where for $\pvec \in \mathbb{S}^{d-1}$ we denote by $\widehat{T}_\pvec \in \mathbf{G}(d,d{-}1)$ the
$1$-dimensional linear subspace of~$\Rd[d]$ having~$s$ as its unit normal vector.
Note that the mass measure~$|\widehat\mu|_{\mathbf{G}(d,d{-}1)}\in \mathrm{M}(\Rd[d])$ of~$\widehat{\mu}$ 
is simply~$|\mu|_{\mathbb{S}^{d-1}}$ and that the map~$\vec{H}$ from the previous lemma
is the generalized mean curvature vector of~$\widehat{\mu}$ in the usual sense
(with respect to tangential variations). Hence, for what follows we will be cavalier
about the distinction between~$\mu$ and~$\widehat{\mu}$.

\begin{theorem}[Allard's regularity theory---interior and boundary case]
\label{theo:allard_reg}
Fix data $\rho \in (0,\ell)$, $x_0 \in \supp\mu$,
and $\widehat{T} \in \mathbf{G}(d,d{-}1)$.

There exist constants $\varepsilon_{reg},\gamma_{reg} \in (0,1)$ and
$C_{reg} \in (1,\infty)$ such that:
\begin{itemize}[leftmargin=0.7cm]
\item[i)] If $x_0 \in \supp\mu \cap \Omega$
such that~\eqref{eq:genMeanCurvatureTangential} holds for all 
$B \in C^1_{cpt}(B_{2\rho}(x_0);\Rd[d])$ and
\begin{align}
\label{eq:assumptionsIntRegularity}
\begin{cases}
\frac{|\mu|_{\mathbb{S}^{d-1}}(B_\rho(x_0))}{\omega_{d-1}\rho^{d-1}} 
\leq 1 {+} \varepsilon_{reg}, & \\[1ex]
E_*^{\circ}[x_0,\rho,\widehat{T}] := 
\max\Big\{E_{tilt}[x_0,\rho,\widehat{T}],
\frac{\rho^{2(1{-}\frac{d{-}1}{4})}}{\varepsilon_{reg}}
\big(\int_{B_\rho(x_0)}|\vec{H}|^4\,d|\mu|_{\mathbb{S}^{d-1}}\big)^\frac{1}{2}
\Big\} 
\leq \varepsilon_{reg}, &
\end{cases}
\end{align}
where $E_{tilt}[x_0,\rho,\widehat{T}]$ is the usual tilt excess
relative to the integer rectifiable varifold~$\widehat{\mu}$,
then there exists a $C^{1,1-\frac{d{-}1}{4}}$ function
\begin{align}
\label{eq:graphVarifold}
u\colon (x_0 {+} \widehat{T}) \cap B_{\gamma_{reg}\rho}(x_0) \to \mathbb{R}
\end{align}
with the following properties: $u(x_0)=0$,
\begin{equation}
\begin{aligned}
\label{eq:graphRepVarifold}
&\supp\mu \cap B_{\gamma_{reg}\rho}(x_0)
\\&~~~
= \big\{y{+}u(y)\vec{n}_{\widehat{T}}(y) \colon y \in (x_0 {+} \widehat{T}) \cap B_{\gamma_{reg}\rho}(x_0)\big\}
\cap B_{\gamma_{reg}\rho}(x_0),
\end{aligned}
\end{equation}
where $\vec{n}_{\widehat{T}}$ is a normal vector of the affine subspace $x_0{+}\widehat{T}$, and
\begin{equation}
\begin{aligned}
\label{eq:estimatesGraphVarifold}
&\big\|(\rho^{-1} u,\nabla_{x_0{+}\widehat{T}} u)\big\|_{L^\infty\big((x_0 {+} \widehat{T}) \cap B_{\gamma_{reg}\rho}(x_0)\big)}
\\&~~~
+
\rho^{1-\frac{d{-}1}{4} 
\big[\nabla_{x_0{+}\widehat{T}} u\big]_{C^{0,1-\frac{d{-}1}{4}}\big((x_0 {+} \widehat{T}) \cap B_{\gamma_{reg}\rho}(x_0)\big)}
}\\&~~~~~~
\leq C_{reg}\bigg\{E_{tilt}^{\frac{1}{2}}[x_0,\rho,\widehat{T}]
+ 
\rho^{1-\frac{d{-}1}{4}}
\Big(\int_{B_\rho(x_0)}|\vec{H}|^4\,d|\mu|_{\mathbb{S}^{d-1}}\Big)^\frac{1}{4}
\bigg\}.
\end{aligned}
\end{equation}
\item[ii)] If $x_0 \in \supp\mu \cap \partial\Omega$ such that
\begin{align}
\label{eq:assumptionsBoundaryRegularity}
\begin{cases}
\frac{|\mu|_{\mathbb{S}^{d-1}}(B_\rho(x_0)) + |\mu|_{\mathbb{S}^{d-1}}(\widetilde{B}_\rho(x_0))}
{\omega_{d-1}\rho^{d-1}} 
\leq 1 {+} \varepsilon_{reg}, & \\[1ex]
E_*[x_0,\rho,\mathrm{Tan}_{x_0}^\perp\p\Omega] :=
\max\Big\{E_*^{\circ}[x_0,\rho,\mathrm{Tan}_{x_0}^\perp\p\Omega],
\frac{\rho}{\ell}\Big\} \leq \varepsilon_{reg}, &
\end{cases}
\end{align}
where $\widetilde{B}_\rho(x_0) :=
\{x \in \Rd[d]\colon |\widetilde{x} - x_0| < \rho \text{ where } \widetilde{x} 
:= x - 2s_{\p\Omega}(x)\n_{\p\Omega}(P_{\p\Omega}(x))\}$, 
then there exists a $C^{1,\frac{1}{2}}$ function
\begin{align}
\label{eq:graphVarifold2}
u\colon (x_0 {+} \widehat{T}) \cap B_{\gamma_{reg}\rho}(x_0) \to \mathbb{R}
\end{align}
with the following properties: $u(x_0)=0$, 
\begin{equation}
\begin{aligned}
\label{eq:graphRepVarifold2}
&\supp\mu \cap B_{\gamma_{reg}\rho}(x_0)
\\&~~~
= \big\{y{+}u(y)\vec{n}_{\widehat{T}}(y) \colon y \in (x_0 {+} \widehat{T}) \cap B_{\gamma_{reg}\rho}(x_0)\big\}
\cap B_{\gamma_{reg}\rho}(x_0) \cap \overline{\Omega},
\end{aligned}
\end{equation}
and
\begin{align}
\label{eq:contactAngle}
\n_{\p\Omega}(x_0) \in \mathrm{Tan}_{x_0}(\supp\mu)
\end{align}
as well as
\begin{equation}
\begin{aligned}
\label{eq:estimatesGraphVarifold2}
&\big\|(\rho^{-1} u,\nabla_{x_0{+}\widehat{T}} u)\big\|_{L^\infty\big((x_0 {+} \widehat{T}) \cap B_{\gamma_{reg}\rho}(x_0) \cap \overline{\Omega}\big)}
\\&~~~
\leq C_{reg}\varepsilon_{reg}^\frac{1}{2(d{-}1) + 3},
\\[1ex]
&\rho^{\frac{1}{2}} \big[\nabla_{x_0{+}\widehat{T}} u\big]_{C^{0,\frac{1}{2}}\big((x_0 {+} \widehat{T}) 
\cap B_{\gamma_{reg}\rho}(x_0) \cap \overline{\Omega}\big)}
\\&~~~
\leq C_{reg}\bigg\{E_{tilt}^{\frac{1}{2}}[x_0,\rho,\widehat{T}]
+ \rho^{1-\frac{d{-}1}{4}}
\Big(\int_{B_\rho(x_0)}|\vec{H}|^4\,d|\mu|_{\mathbb{S}^{d-1}}\Big)^\frac{1}{4}
+ \Big(\frac{\rho}{\ell}\Big)^\frac{1}{2}
\bigg\}.
\end{aligned}
\end{equation}
\end{itemize}
\end{theorem}

In a next step, we aim to define a scale
$\rho_{reg} \in (0,\ell)$, uniformly on $[0,T']$, such that one may
afterward choose $M \gg 1$, again uniformly on $[0,T']$, so that the
assumptions~\eqref{def:goodTimesRepeated1}--\eqref{def:goodTimesRepeated2}
imply, among other things, that for all $x_0 \in \supp |\mu|_{\mathbb{S}^{d-1}}$,
the rectifiable set $\supp |\mu|_{\mathbb{S}^{d-1}}$ can be locally represented around~$x_0$
as a graph on scale~$\gamma_{reg}\rho_{reg}$ (with respect to a suitably chosen subspace)
in the precise sense of Theorem~\ref{theo:allard_reg}.

By locally uniform regularity of~$\mathscr{A}$, one may choose 
\begin{align}
\label{def:epsilon}
\varepsilon = \varepsilon(\A)
\in \bigg(0,\min\Big\{\varepsilon_{reg},\frac{1}{8}\frac{1}{C_{reg}}\frac{1}{16 C}\Big\}\bigg)
\quad\text{and}\quad 
\widetilde{C} = \widetilde{C}(\A) \in (1,\infty)
\end{align}
uniformly on $[0,T']$ 
such that for all
\begin{align}
\label{def:rho}
\rho \in (0,\rho_{reg}],
\quad\text{where }
\rho_{reg} := \widetilde{C}^{-1}\widetilde{\rho}
\text{ and } \widetilde{\rho} := \frac{\varepsilon^2}{C_{\Lambda}^2}\ell,
\end{align}
at least the following properties are satisfied
($P_{\mathrm{Tan}_{y}\p\A}$ denotes the orthogonal 
projection onto the tangent space $\mathrm{Tan}_{y}\p\A$
at $y \in \p\A$):
\begin{itemize}[leftmargin=0.5cm]
%\item for all $y\in\p\A$, 
					%\begin{align}
					%\label{eq:flatness0}
					%\p\A \cap B_\rho(y)
					%\text{ can be written
					%as a graph over }
					%(y{+}\mathrm{Tan}_{y}\p\A)\cap B_\rho(y)
					%\end{align}
					%in the direction of $\n_{\p\A}(y)$,
%\item \textcolor[rgb]{1,0,0}{for all $y \in \p\A$, }
					%\begin{align}
					%\label{eq:flatnessNew}
					%\textcolor[rgb]{1,0,0}{%
					%\p\A \cap B_\rho(y)
					%\text{ is a normal neighborhood of }
					%y \in \p\A,
					%}%
					%\end{align}
%\item given $\alpha \in (0,\frac{\pi}{2})$ defined by
					 %$3C_{reg}\varepsilon =: \tan\alpha$, it holds
					 %for all $x_0 \in \{|\xi| > \frac{1}{2}\}$
					 %and all $x \in \p B_{\rho}(x_0)$ that
					 %\begin{align}
					 %\label{eq:flatness2}
					 %\big|\textcolor[rgb]{1,0,0}{P_{\mathrm{Tan}_{P_{\p\A}(x_0)}\p\A}}(x{-}x_0)\big| \geq \rho \cos\alpha
					 %\quad\Longrightarrow\quad
					 %|P_{\p\A}(x_0) - P_{\p\A}(x)| \geq \frac{1}{2} \rho \cos\alpha,
					 %\end{align}
\item it holds
					 \begin{align}
					 \label{eq:flatness3}
					 x_0 \in \{|\xi| \leq 1/2\}
					 \quad\Longrightarrow\quad
					 B_\rho(x_0) \subset \{|\xi| \leq 3/4\},
					 \end{align}
\item for all $x_0 \in \{|\xi| > \frac{1}{2}\}$ and all $x \in B_\rho(x_0)$ it holds
					\begin{align}
					\label{eq:flatness4}
					\big|\xi(x) - \n_{\p\A}\big(P_{\p\A}(x_0)\big)\big| \leq 
					\frac{1}{1+\varepsilon/2}\frac{\varepsilon^2}{4},
					\end{align}
\item for all $x_0 \in \{|\xi| > \frac{1}{2}\}$ and all 
			$y \in B_{\widetilde{C}^{-1}\rho}(y_0) \cap \p\A$,
			where $y_0 := P_{\p\A}(x_0)$, it holds
					\begin{equation}
					\begin{aligned}
					\label{eq:flatness5}
					&P_{x_0{+}\mathrm{Tan}_{y_0}\p\A}\big(B_{\rho}(x_0)
					\cap\{y{+}s\n_{\p\A}(y)\colon s \in (-\ell,\ell)\}\big)
					\\&~~~~~~~~~~~~~~~~~~~~~~~~~~~~~
					\subset (x_0 {+} \mathrm{Tan}_{y_0}\p\A) \cap B_{(\cos\alpha_{reg})\rho}(x_0)
					\end{aligned}
					\end{equation}
					and
					\begin{align}
					\label{eq:flatness6}
					\text{the opening angle between } \n_{\p\A}(y) \text{ and } \n_{\p\A}(y_0)
					\text{ is at most } \frac{1}{2}\Big(\frac{\pi}{2} - \alpha_{reg}\Big),
					\end{align}
					where $\alpha_{reg} \in (0,\frac{\pi}{2})$ is defined by
					$3C_{reg}\varepsilon =: \tan\alpha_{reg}$,
\item for all $y_0,y\in\p\A$ it holds
					\begin{align}
					\label{eq:flatness1}
					\big|P_{\mathrm{Tan}_{y_0}\p\A}(y{-}y_0)\big| \leq \rho
					\quad\Longrightarrow\quad
					|(y{-}y_0) \cdot \n_{\p\A}(y_0)| \leq \frac{1}{4} \Big(\frac{\ell}{16 C}\Big).
					\end{align}
\end{itemize}
Condition~\eqref{eq:flatness3} will be applied to show that $\supp\mu$
cannot be located too far away from~$\p\A$. Condition~\eqref{eq:flatness4}
will be of importance whenever one has to switch between the usual geometric measure
theory tilt excess~$E_{tilt}$ and our relative entropy~$E_{rel}$, as~$E_{tilt}$ is working locally with
a fixed tangent space whereas~$E_{rel}$ encodes tilt excess at each point based on the
varying vector field~$\xi$. We will rely on conditions~\eqref{eq:flatness4} and~\eqref{eq:flatness5}
for the proof of the graph property~\eqref{eq:graph_representation}; more precisely,
if $x_0 \in \supp\mu$ is a point sufficiently close to~$\p\A$ where $\supp\mu$
admits the graph representation~\eqref{eq:graphRepVarifold}--\eqref{eq:estimatesGraphVarifold} 
with respect to~$\widehat{T} = \mathrm{Tan}_{P_{\p\A}(x_0)}\p\A$
locally on scale $\rho=\gamma_{reg}\rho_{reg}$,
then~\eqref{eq:flatness4} and~\eqref{eq:flatness5} ensure that $\supp\mu$
can also be written as a graph in the sense of~\eqref{eq:graph_representation}
locally on the smaller scale $\widetilde{C}^{-1}\gamma_{reg}\rho_{reg}$.
Finally, condition~\eqref{eq:flatness1} is just a local height estimate for
the interface~$\p\A$ if one locally screens it from~$y_0 {+} \mathrm{Tan}_{y_0}(\p\A)$,
and it will be needed in our proof for the estimate~\eqref{eq:perturbative_regime1}.

With these choices in place,
we first show that~$\rho_{reg}$ indeed satisfies the aforementioned goal
of representing $\supp |\mu|_{\mathbb{S}^{d-1}}$ locally on scale~$\gamma_{reg}\rho_{reg}$ 
around any of its points as a graph.

\begin{lemma}[Applicability of Allard regularity theory]
\label{lem:localGraphProperty}
There exists 
$M_0 \gg_{\A,C_\Lambda} 1$, uniformly on $[0,T']$, 
such that for all $M \geq M_0$ the 
assumptions~\emph{\eqref{def:goodTimesRepeated1}--\eqref{def:goodTimesRepeated2}}
imply that
the hypotheses~\eqref{eq:assumptionsIntRegularity} hold true 
at scale $\rho = \rho_{reg}$ for all interior points
$x_0 \in \supp\mu \cap \Omega$ and
\begin{align}
\label{eq:choiceSubspaceInt}
\begin{cases}
\text{any } \widehat{T} \in \mathbf{G}(d,d{-}1) & \text{if } x_0 \in \{|\xi| \leq 1/2\}, \\
\widehat{T} = \mathrm{Tan}_{P_{\p\A}(x_0)}\p\A &\text{if } x_0 \in \{|\xi| > 1/2\},
\end{cases}
\end{align}
as well as that the hypotheses~\eqref{eq:assumptionsBoundaryRegularity}
hold true at scale $\rho = \rho_{reg}$ for all boundary points $x_0 \in \supp\mu \cap \p\Omega$
(and $\widehat{T} \in \mathbf{G}(d,d{-}1)$ fixed by $\mathrm{Tan}_{x_0}^\perp\p\Omega$).
Furthermore, in both cases
\begin{align}
\label{eq:smallnessGraph1}
E_{tilt}^\frac{1}{2}[x_0,\rho_{reg},\widehat{T}] &\leq \varepsilon,
\\
\label{eq:smallnessGraph2}
\rho_{reg}^{1-\frac{d{-}1}{4}}
\Big(\int_{B_{\rho_{reg}}(x_0)}|\vec{H}|^4\,d|\mu|_{\mathbb{S}^{d-1}}\Big)^\frac{1}{4}
&\leq \varepsilon
\end{align}
for all $x_0 \in \supp\mu$ with associated data $(\rho_{reg},\widehat{T})$ as above.
\end{lemma}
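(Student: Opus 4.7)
The plan is to verify the Allard-type hypotheses of Theorem~\ref{theo:allard_reg} at scale $\rho_{reg}$ for every $x_0 \in \supp\mu$, by splitting into three cases according to whether $x_0\in\p\Omega$, $|\xi(x_0)|\le 1/2$, or $|\xi(x_0)|>1/2$. The curvature terms are handled at once: Lemma~\ref{lem:curvatureControl} together with \eqref{def:goodTimesRepeated1} gives $\|\vec{H}\|_{L^2(|\mu|_{\mathbb{S}^1})}\le C_\Lambda$, so the choice $\rho_{reg}=\widetilde{C}^{-1}\varepsilon^2\ell/C_\Lambda^2$ with $\widetilde{C}=\widetilde{C}(\A)$ large immediately settles $\rho_{reg}\int_{B_{\rho_{reg}}(x_0)}|\vec{H}|^2\,d|\mu|\le\varepsilon^2$ and $\rho_{reg}/\ell\le\varepsilon_{reg}$, covering \eqref{eq:smallnessGraph2}, the curvature term of $E_*^\circ$, and the $\rho/\ell$-term of $E_*$, independently of $M$.

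Second, for $x_0\in\supp\mu$ with either $x_0\in\p\Omega$ or $|\xi(x_0)|\le 1/2$, the key observation is that $\xi\equiv 0$ on $\p\Omega$ (by \eqref{eq:min_assumption6} together with the distance gap between $\p\A$ and $\p\Omega$) and \eqref{eq:flatness3} jointly force $|\xi|\le 3/4$ on $B_{\rho_{reg}}(x_0)\cup\widetilde B_{\rho_{reg}}(x_0)$ (for the reflected ball using smoothness of $\p\Omega$ at the scale $\rho_{reg}\ll\ell$); hence $1-s\cdot\xi\ge 1/4$ pointwise, and by \eqref{eq:repRelEntropy3},
\[
|\mu|_{\mathbb{S}^1}\bigl(B_{\rho_{reg}}(x_0)\cup\widetilde B_{\rho_{reg}}(x_0)\bigr)\le 4\,E[A,\mu|\A]\le \frac{4\ell}{M}.
\]
Choosing $M\gg_{\widetilde C,C_\Lambda,\varepsilon,\varepsilon_{reg}}1$ then makes both the density ratios in \eqref{eq:assumptionsIntRegularity}$_1$/\eqref{eq:assumptionsBoundaryRegularity}$_1$ and, since $|T_s-\widehat T|^2\le 2$ is a trivial bound, the tilt excesses smaller than $\min\{\varepsilon_{reg},\varepsilon^2\}$, thereby also giving \eqref{eq:smallnessGraph1}.

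The remaining case is $x_0\in\supp\mu\cap\Omega$ with $|\xi(x_0)|>1/2$, for which $\widehat T:=\mathrm{Tan}_{P_{\p\A}(x_0)}\p\A$ has unit normal $\vec n:=\n_{\p\A}(P_{\p\A}(x_0))$ and \eqref{eq:flatness4} yields $|\xi(x)-\vec n|\le\varepsilon^2/4$ throughout $B_{\rho_{reg}}(x_0)$. Writing $|T_s-\widehat T|^2=2(1-(s\cdot\vec n)^2)\le 4(1-s\cdot\xi)+4|\xi-\vec n|$ pointwise and integrating, one obtains
\[
E_{tilt}[x_0,\rho_{reg},\widehat T]\le\frac{4E_{rel}}{\rho_{reg}}+\varepsilon^2\cdot\frac{|\mu|(B_{\rho_{reg}}(x_0))}{\rho_{reg}},
\]
so matters reduce to the upper density estimate $|\mu|(B_{\rho_{reg}}(x_0))\le(1+\varepsilon_{reg})\omega_1\rho_{reg}$. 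To derive it, split $1=(1-s\cdot\vec n)+s\cdot\vec n$ and integrate against a smooth radial cutoff $\psi$ with $\mathbf{1}_{B_{\rho_{reg}}}\le\psi\le\mathbf{1}_{B_{2\rho_{reg}}}$: the $(1-s\cdot\vec n)$-part is absorbed into $E_{rel}$ plus a small $|\xi-\vec n|$-error, while the $s\cdot\vec n$-part is converted via the $(\chi_A,\mu)$-compatibility condition from \cite[Definition~3~ii)]{Hensel2022} into $\int_A\vec n\cdot\nabla\psi\,dx$. The flatness of $\p\A$ at the scale $\rho_{reg}$ (cf.\ \eqref{eq:flatness0}--\eqref{eq:flatness1}) identifies $\chi_\A$ with the half-space indicator $\mathbf{1}_{\{(x-P_{\p\A}(x_0))\cdot\vec n<0\}}$ on $B_{2\rho_{reg}}(x_0)$ up to a set of area $O(\varepsilon\rho_{reg}^2)$, whereas the bulk closeness $\mathcal{L}^2(A\Delta\A)\lesssim\widetilde{C}E_{vol}\le\widetilde{C}\ell/M$ (via the tubular change of variables \eqref{eq:tub_nbhd_diffeo}) replaces $\chi_A$ by $\chi_\A$, so $\int_A\vec n\cdot\nabla\psi\,dx=(1+O(\varepsilon))\omega_1\rho_{reg}$. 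Combining, one concludes \eqref{eq:assumptionsIntRegularity}$_1$ and \eqref{eq:smallnessGraph1} for $\varepsilon$ small and $M$ large.

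The main obstacle is precisely this upper density estimate in the large-$|\xi|$ regime: unlike in the small-$|\xi|$/boundary cases, the mass in $B_{\rho_{reg}}(x_0)$ is intrinsically of order $\omega_1\rho_{reg}$, so the bound cannot come from $E_{rel}$ alone but requires the quantitative half-space comparison above. In particular, one must rule out ``extra'' mass of $\mu$ either from higher-multiplicity sheets (controlled by the $\{\varrho_t\le 1/3\}$-term in \eqref{eq:repRelEntropy2}) or from stray components of $\p^*A$ lying inside $B_{\rho_{reg}}(x_0)$, the latter being naturally discussed via the planar decomposition Theorem~\ref{theo:decomp_finite_perimeter}, each such Jordan-Lipschitz component producing a positive-mass contribution to either $E_{rel}$ (orientation mismatch with $\xi$) or $E_{vol}$ (symmetric difference $A\Delta\A$). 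This competition between mass concentration and the smallness $E[A,\mu|\A]\le\ell/M$ is what dictates the final dependence $M=M(\Omega,A(0),\A,C,\Lambda,T')$ in \eqref{eq:choiceSmallness}.
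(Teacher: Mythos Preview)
Your treatment of the curvature term and of the two ``easy'' cases ($x_0\in\p\Omega$ or $|\xi(x_0)|\le 1/2$) matches the paper's proof: in both regimes the ball lies in $\{|\xi|\le 3/4\}$, so the full varifold mass there is controlled by $E_{rel}$ via~\eqref{eq:repRelEntropy3}, and the tilt excess follows trivially.

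For the main case $|\xi(x_0)|>1/2$ you take a genuinely different route from the paper. The paper works \emph{on the interface}: it splits $\p^*A\cap B_{\rho_{reg}}(x_0)$ according to (a) whether the normal slice through a point meets $\p^*A$ more than once and (b) whether $\n_{\p^*A}\cdot\xi$ is close to~$1$; the multi-hit and misaligned pieces are absorbed into $E_{rel}$, and on the remaining single-sheet, well-aligned piece the coarea formula (projection onto the tangent line at $P_{\p\A}(x_0)$) yields directly $\mathcal{H}^1\le(1+\varepsilon/2)\omega_1\rho_{reg}$. In particular the paper uses only $E_{rel}$, never $E_{vol}$, in this step. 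Your approach instead passes to the \emph{bulk}: via the compatibility condition you rewrite $\int\psi\,s\cdot\vec n\,d\mu$ as $-\int_A\vec n\cdot\nabla\psi$, replace $\chi_A$ by $\chi_\A$ at the cost of $\mathcal{L}^2(A\Delta\A)$, and compare to a half-space. This is a nice idea and avoids the slicing argument, but it brings in $E_{vol}$ and requires control on $\|\nabla\psi\|_\infty\mathcal{L}^2(A\Delta\A)$.

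There is, however, a concrete quantitative gap in your density estimate. With the cutoff you specify, $\mathbf{1}_{B_{\rho_{reg}}}\le\psi\le\mathbf{1}_{B_{2\rho_{reg}}}$, the half-space term $-\int_{\{(\cdot-y_0)\cdot\vec n>0\}}\vec n\cdot\nabla\psi=\int_{\{(\cdot-y_0)\cdot\vec n=0\}}\psi$ is bounded only by the length of the chord $\{(\cdot-y_0)\cdot\vec n=0\}\cap B_{2\rho_{reg}}(x_0)$, which can be as large as $4\rho_{reg}=2\,\omega_1\rho_{reg}$. Thus your argument as written yields $|\mu|(B_{\rho_{reg}}(x_0))\le 2\,\omega_1\rho_{reg}+o(\rho_{reg})$, not $(1+\varepsilon_{reg})\omega_1\rho_{reg}$; the claim ``$\int_A\vec n\cdot\nabla\psi=(1+O(\varepsilon))\omega_1\rho_{reg}$'' is off by a factor of~$2$, and this is exactly the regime where Allard's hypothesis is sharp. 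The fix is to take $\psi$ supported in $B_{(1+\delta)\rho_{reg}}$ with $\delta\ll\varepsilon_{reg}$, so the chord length is at most $(1+\delta)\omega_1\rho_{reg}$; but then $\|\nabla\psi\|_\infty\sim(\delta\rho_{reg})^{-1}$, and you must check that the $A\Delta\A$--error $\|\nabla\psi\|_\infty\mathcal{L}^2(A\Delta\A)\lesssim \ell^2/(\delta\rho_{reg}\sqrt{M})$ can still be made $\le\varepsilon\rho_{reg}$ by choosing $M$ large depending also on $\delta$. Once this is done your tilt-excess bound goes through, since it only needed the density ratio to be $O(1)$.
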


By means of the previous result, we then ensure that the mass measure
of the varifold~$\mu$ reduces to the mass measure of the reduced boundary of the finite perimeter set~$A$
and that the latter is supported sufficiently close to the interface~$\p\A$
of the strong solution (in particular, of positive distance to the physical boundary~$\p\Omega$).

\begin{lemma}[Reduction to mass measure with no boundary intersection]
\label{lem:structureVarifold}
One may choose 
$M_0 \gg_{\A,C_\Lambda} 1$ uniformly on $[0,T']$
such that for all $M \geq M_0$ the 
assumptions~\emph{\eqref{def:goodTimesRepeated1}--\eqref{def:goodTimesRepeated2}} imply
\begin{align}
\label{eq:closeToStrongSolution}
\partial^*A &\subset \{|\xi| > 1/2\} \subset B_{\ell}(\p\A),
\\
\label{eq:repMassMeasureVarifold}
|\mu|_{\mathbb{S}^{d-1}} &= \mathcal{H}^{d-1} \llcorner \partial^*A
%\otimes (\delta_{\n_{\p^* A}(x)})_{x \in \partial^*A}
\end{align}
and
\begin{equation}
\begin{aligned}
\label{eq:genMeanCurvatureFullSpace}
\int_{\p^*A} (\mathrm{Id} {-} \n_{\p^*A}\otimes \n_{\p^*A}):\nabla B \,d\mathcal{H}^{d-1}
&= \int_{\Rd[d]{\times}\mathbb{S}^{d-1}} (\mathrm{Id} {-} s\otimes s):\nabla B \,d\mu
\\&= -\int_{\Rd[d]} \vec{H} \cdot B \,d|\mu|_{\mathbb{S}^{d-1}}
\end{aligned}
\end{equation}
for all $B \in C^1_{cpt}(\Rd[d];\Rd[d])$.
\end{lemma}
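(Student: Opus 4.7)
The plan is to combine the local graph representation from Lemma~\ref{lem:localGraphProperty} with the smallness assumption $E[A,\mu|\A]\le \ell/M$ to rule out, by choosing $M$ large enough, any structural feature of $\mu$ that would contradict the claimed conclusions. Two facts deserve highlighting upfront. First, the Allard hypothesis $|\mu|_{\mathbb{S}^{1}}(B_\rho(x_0))/\omega_1\rho\le 1+\varepsilon_{reg}$ with $\varepsilon_{reg}<1$, combined with integer rectifiability, forces the multiplicity of $\mu$ on every local graph piece to equal one. Second, every such graph piece has $\mathcal{H}^1$-mass at least of order $\gamma_{reg}\rho_{reg}$ (either the interior graph~\eqref{eq:graphRepVarifold} or, via~\eqref{eq:contactAngle}, the half-graph~\eqref{eq:graphRepVarifold2} extending transversally into $\Omega$).

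First I would show $\supp\mu\cap\{|\xi|\le 1/2\}=\emptyset$ and $\supp\mu\cap\p\Omega=\emptyset$. Suppose $x_0\in\supp\mu\cap\Omega$ with $|\xi(x_0)|\le 1/2$. Interior Allard (Lemma~\ref{lem:localGraphProperty}) produces a $C^{1,1/2}$ graph $\Gamma\subset\supp\mu\cap B_{\gamma_{reg}\rho_{reg}}(x_0)$ of length $\gtrsim\gamma_{reg}\rho_{reg}$, lying in $\{|\xi|\le 3/4\}$ by~\eqref{eq:flatness3}. If $\mathcal{H}^1(\Gamma\cap\p^*A)\ge \tfrac{1}{2}\mathcal{H}^1(\Gamma)$, then $1-\n_{\p^*A}\cdot\xi\ge 1/4$ on the intersection and the third term of~\eqref{eq:repRelEntropy2} is $\gtrsim\gamma_{reg}\rho_{reg}$; otherwise $\mathcal{H}^1(\Gamma\setminus\p^*A)\ge \tfrac{1}{2}\mathcal{H}^1(\Gamma)$ and, by multiplicity one, $\varrho\equiv 0$ there, so the middle term of~\eqref{eq:repRelEntropy2} is $\gtrsim\gamma_{reg}\rho_{reg}$. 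For $x_0\in\supp\mu\cap\p\Omega$, the boundary Allard yields a half-graph of the same order extending into $\Omega$; since $\rho_{reg}\ll\ell$ while $\p\Omega$ has distance $\ge \ell$ from $\p\A$ by~\eqref{eq:no_contact_points}, we have $\xi\equiv 0$ on this half-graph and the same dichotomy applies. In every case the lower bound is $\gtrsim\gamma_{reg}\rho_{reg}$; as $\rho_{reg}/\ell$ depends only on $\A$ and $C_\Lambda$, choosing $M\gg_{\A,C_\Lambda}1$ contradicts $E[A,\mu|\A]\le \ell/M$. Combining this with $\mathcal{H}^1\llcorner\p^*A\ll|\mu|_{\mathbb{S}^{1}}$ proves~\eqref{eq:closeToStrongSolution}.

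At this point $\supp\mu$ is a compact, multiplicity-one, $C^{1,1/2}$ one-manifold inside $\Omega$, hence a finite disjoint union $\Gamma_1,\ldots,\Gamma_N$ of simple closed curves, each of length $\gtrsim\gamma_{reg}\rho_{reg}$. On each $\Gamma_i$, the two one-sided traces of $\chi_A$ on $\Gamma_i$ define $\{0,1\}$-valued $BV$ functions along $\Gamma_i$; any jump of such a trace would generate reduced boundary of $A$ transverse to $\Gamma_i$, contradicting $\p^*A\subset\supp\mu=\bigsqcup_j\Gamma_j$ (the $\Gamma_j$ are disjoint, so no other component can absorb such a transverse piece). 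Hence each trace is constant on the connected curve $\Gamma_i$, which yields the dichotomy $\Gamma_i\subset\p^*A$ mod $\mathcal{H}^1$ (traces differ) versus $\Gamma_i\cap\p^*A=\emptyset$ mod $\mathcal{H}^1$ (traces agree). Any $\Gamma_i$ in the second class has $\varrho\equiv 0$ and contributes $\gtrsim\gamma_{reg}\rho_{reg}$ to the middle term of~\eqref{eq:repRelEntropy2}; for $M\gg_{\A,C_\Lambda}1$ this is again incompatible with $E[A,\mu|\A]\le \ell/M$. Consequently every $\Gamma_i\subset\p^*A$ mod $\mathcal{H}^1$, so $\supp\mu=\p^*A$ mod $\mathcal{H}^1$, and multiplicity one yields~\eqref{eq:repMassMeasureVarifold}.

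Identity~\eqref{eq:genMeanCurvatureFullSpace} then follows from~\eqref{eq:repMassMeasureVarifold} together with the $(\chi_A,\mu)$-compatibility of~\cite[Definition~3]{Hensel2022} (identifying the $\mu$-orientation with $\n_{\p^*A}$ on $\p^*A$) for the first equality, and from Lemma~\ref{lem:curvatureControl} for the second; the latter is a~priori only stated for test fields tangential to $\p\Omega$, but thanks to~\eqref{eq:closeToStrongSolution} the relevant measures are supported in $B_\ell(\p\A)\subset\Omega$, so any $B\in C^1_{cpt}(\mathbb{R}^2;\mathbb{R}^2)$ may be smoothly truncated near $\p\Omega$ without changing either integral. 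The main obstacle is the quantitative bookkeeping behind Steps 1 and 2: verifying that the Allard conclusion really produces a graph of length $\gtrsim\gamma_{reg}\rho_{reg}$ uniformly in $x_0$ (including at boundary points), and that a single threshold $M$ works uniformly for $t\in(0,T')$. Both rely on the locally uniform regularity of $\A$ encoded in $\rho_{reg}$ and on the curvature bound $C_\Lambda$ furnished by Lemma~\ref{lem:curvatureControl}.
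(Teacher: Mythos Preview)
Your proof is correct and reaches the same conclusions, but the route differs from the paper's in two places worth noting.

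For ruling out $\supp\mu\cap\p\Omega$ and $\supp\mu\cap\{|\xi|\le 1/2\}$, the paper argues by contradiction via Theorem~\ref{theo:decomp_finite_perimeter}: it locates a Jordan--Lipschitz curve $J\subset\p^*A$ through the offending point, bounds $\mathcal{H}^1(J)$ by $E_{rel}$, and concludes that for $M$ large the entire curve is trapped inside $B_{\gamma_{reg}\rho_{reg}/2}(x_0)$, contradicting the Allard graph representation on the full ball. Your approach bypasses the Jordan curve decomposition entirely and instead bounds $E_{rel}$ from below by the $\mathcal{H}^1$-mass of the Allard graph piece directly, via the dichotomy on~\eqref{eq:repRelEntropy2}. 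This is more direct and avoids invoking Theorem~\ref{theo:decomp_finite_perimeter} at this stage.

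For~\eqref{eq:repMassMeasureVarifold}, the paper's argument is a one-liner: unit density~\eqref{eq:unitDensity} together with the odd/even multiplicity structure encoded in \cite[Definition~3, item~(ii)]{Hensel2022} forces $\supp\mu=\p^*A$ mod~$\mathcal{H}^1$ (odd multiplicity on $\p^*A$, even off it, but density one everywhere leaves no room for the even part). Your argument instead exploits that $\supp\mu$ is a compact $C^{1,1/2}$ one-manifold, hence a finite union of simple closed curves, and then uses a connectedness/trace argument plus another $E_{rel}$ lower bound to eliminate ``ghost'' components disjoint from $\p^*A$. This is more self-contained (it does not appeal to the parity structure of the cited definition) but longer. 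One phrasing remark: your claim that the one-sided traces are ``$BV$ along $\Gamma_i$'' with jumps generating ``transverse'' boundary is not the cleanest formulation; the robust statement is simply that $\p^*A\subset\supp\mu$ forces $\chi_A$ to be constant on each connected component of $\Omega\setminus\supp\mu$, and since the $\Gamma_i$ are disjoint $C^1$ curves each admits a tubular neighborhood whose two sides lie in single such components, whence the traces are constant along $\Gamma_i$. With that adjustment the dichotomy is immediate.
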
 

In the next step, we leverage on the previous results to show that
the interface of the weak solution~$\p^*A$ is in fact ``sufficiently rich''
in order to allow for a graph
representation relative to the interface of the strong solution~$\p\A$.
Note that this cannot follow just from requiring smallness of the
relative entropy~\eqref{eq:rel_entropy}---which, by careful inspection
of the proofs, is in fact up to now the only required ingredient
concerning the smallness of the overall error---but also requires smallness
of the volume error~\eqref{eq:bulk_error}: indeed, the relative entropy 
provides no error control in the regime of vanishing interfacial
mass $|\mu|_{\mathbb{S}^{d-1}}(\Rd[d]) \downarrow 0$.

\begin{lemma}[Graph representation]
\label{lem:graphCandidate}
One may choose 
$M_0 \gg_{\A,C_\Lambda} 1$ uniformly on $[0,T']$
such that for all $M \geq M_0$ the 
assumptions~\emph{\eqref{def:goodTimesRepeated1}--\eqref{def:goodTimesRepeated2}} imply that $\p^*A$ 
can be represented as a graph over $\p\A$
in the sense of~\eqref{eq:graph_representation}
with regularity~\eqref{eq:regularity_graph}.
\end{lemma}

We finally conclude by showing that the already established graph
representation satisfies all the required properties.

\begin{lemma}[Height function estimates]
\label{lem:graphRep}
One may choose 
$M_0 \gg_{\A,C_\Lambda} 1$ uniformly on $[0,T']$
such that for all $M \geq M_0$ the 
assumptions~\emph{\eqref{def:goodTimesRepeated1}--\eqref{def:goodTimesRepeated2}} imply that
the height function $h\colon\p\A\to[-\ell,\ell]$ 
from Lemma~\ref{lem:graphCandidate} representing~$\p^*A$ as a graph over~$\p\A$
satisfies the estimates~\eqref{eq:perturbative_regime1}.
\end{lemma}

\subsection{Proofs} We continue with the proofs of the several intermediate results
from the previous subsection, eventually culminating into a proof of
Proposition~\ref{prop:perturbative_graph_regime}, which itself is the last missing
ingredient for the proof of our main result, Theorem~\ref{theo:mainResult}.

\begin{proof}[Proof of Lemma~\ref{lem:curvatureControl}]
Denote by $H_{|\mu|_{\mathbb{S}^{d-1}} \llcorner \Omega}\colon 
\supp(|\mu|_{\mathbb{S}^{d-1}} \llcorner \Omega) \to \mathbb{R}^d$
the map from \cite[Definition~3, item~iii)]{Hensel2022} and define
\begin{align}
\label{eq:definitionGenMeanCurvatureTangential}
\vec{H} := 
\begin{cases}
H_{|\mu|_{\mathbb{S}^{d-1}} \llcorner \Omega} & \text{in } \Omega,
\\
0 & \text{else}.
\end{cases}
\end{align}
The identity~\eqref{eq:genMeanCurvatureTangential} then directly
follows from~\eqref{eq:definitionGenMeanCurvatureTangential} and
\cite[(17h)]{Hensel2022}, whereas the estimate~\eqref{eq:integrability_curvarture}
is a consequence of \cite[Proposition~5, (26), Definition~3 item iv)]{Hensel2022}
and~\eqref{eq:L2ControlChemPotential}.
\end{proof}

\begin{proof}[Proof of Theorem~\ref{theo:allard_reg}]
We naturally distinguish between two cases.

\textit{Case 1:} 
For the case of interior points $x_0 \in \supp\mu\cap\Omega$,
let $\bar\varepsilon,\bar\gamma\in (0,1)$ and $\bar c\in(1,\infty)$ be the constants
from~\cite[23.1 Theorem]{Simon1983}. It follows from~\cite[23.2 Remark, item~(2)]{Simon1983}
and our hypotheses that for suitably small~$\varepsilon_{reg} \in (0,\bar\varepsilon)$ 
\begin{align}
\label{eq:massBoundSmallScales}
\frac{|\mu|_{\mathbb{S}^{d-1}}(B_\sigma(x))}
{\omega_{d-1}\sigma^{d-1}} \leq \frac{3}{2}
\end{align}
for all $\sigma \in (0,\frac{\rho}{2})$ and all $x \in \supp\mu \cap B_{\varepsilon_{reg}\rho}(x_0)$.
In particular, the $(d{-}1)$-dimensional density of~$\mu$, denoted by $\Theta_{d-1}(\mu,\cdot)$, satisfies  
\begin{align}
\label{eq:unitDensity}
\Theta_{d-1}(\mu,x) &= 1
&&\text{for all } x \in \supp\mu \cap B_{\varepsilon_{reg}\rho}(x_0).
\end{align}
Furthermore, for 
small enough~$\varepsilon_{reg} \in (0,\bar\varepsilon)$, we deduce 
from~\cite[Proof of 23.1, inequality~(12)]{Simon1983} that
\begin{align}
\label{eq:smallnessExcess}
E_*^{\circ}[x_0,(\varepsilon_{reg}\rho)/2,\widehat{T}] \leq \bar\varepsilon.
\end{align}
Hence, the hypotheses of~\cite[23.1 Theorem]{Simon1983} are fulfilled
for the choices $U = B_{\varepsilon_{reg}\rho}(x_0)$ and 
$\rho = (\varepsilon_{reg}\rho)/2$, so that the remaining claims
follow from the conclusions of~\cite[23.1 Theorem]{Simon1983} 
for $\gamma_{reg}:=(\varepsilon_{reg}\bar\gamma)/2$ and $C_{reg}:=\bar c$.

\textit{Case 2:} For the case of boundary points $x_0 \in \supp\mu\cap\p\Omega$,
one may argue analogously, using~\cite[inequality between~(40) and~(41)]{Grueter1986},
\cite[inequality~(51)]{Grueter1986}
and \cite[4.9 Theorem]{Grueter1986}
as substitutes for~\cite[23.2 Remark, item~(2)]{Simon1983}, 
\cite[Proof of 23.1, inequality~(12)]{Simon1983}
and \cite[23.1 Theorem]{Simon1983}, respectively. 
Note that the analogue of~\eqref{eq:massBoundSmallScales2} is given by
\begin{align}
\label{eq:massBoundSmallScales2}
\frac{|\mu|_{\mathbb{S}^{d-1}}(B_\sigma(x))+
|\mu|_{\mathbb{S}^{d-1}}(\widetilde{B}_\sigma(x))}
{\omega_{d-1}\sigma^{d-1}} \leq \frac{3}{2}
\end{align}
for all $\sigma \in (0,\frac{\rho}{2})$ and all $x \in \supp\mu \cap B_{\varepsilon_{reg}\rho}(x_0)$.
In particular,
\begin{equation}
\begin{aligned}
\label{eq:unitDensity2}
\Theta_{d-1}(\mu,x) &= 1
&&\text{for all } x \in \supp\mu \cap B_{\varepsilon_{reg}\rho}(x_0) \cap \Omega,
\\
\Theta_{d-1}(\mu,x) &\leq \frac{3}{4}
&&\text{for all } x \in \supp\mu \cap B_{\varepsilon_{reg}\rho}(x_0) \cap \p\Omega;
\end{aligned}
\end{equation}
a fact which we state for future reference (cf.\ \cite[3.2 Corollary]{Grueter1986}).
Note finally that the second statement of~\eqref{eq:unitDensity2} implies
$|\mu|_{\mathbb{S}^{d-1}}(B_{\varepsilon_{reg}\rho}(x_0) \cap \p\Omega) = 0$. 
\end{proof}

\begin{proof}[Proof of Lemma~\ref{lem:localGraphProperty}]
Using the bound~\eqref{eq:integrability_curvarture}
and the definitions~\eqref{def:epsilon}--\eqref{def:rho},
we get
\begin{align}
\label{eq:hypothesisCurvature}
\frac{\rho_{reg}^{2(1{-}\frac{d{-}1}{4})}}{\varepsilon_{reg}}
\Big(\int_{B_{\rho_{reg}}(x_0)}|\vec{H}|^4\,d|\mu|_{\mathbb{S}^{d-1}}\Big)^\frac{1}{2}
\leq \frac{\rho_{reg}}{\varepsilon_{reg}} C_\Lambda^2
\leq \frac{\varepsilon^2}{\varepsilon_{reg}} \leq \varepsilon_{reg}.
\end{align}
The penultimate inequality settles~\eqref{eq:smallnessGraph2}
whereas the last is precisely the curvature estimate
required by~\eqref{eq:assumptionsIntRegularity}
and~\eqref{eq:assumptionsBoundaryRegularity}. Note also
that by definitions~\eqref{def:epsilon}--\eqref{def:rho}
it holds $\rho_{reg}/\ell \leq \varepsilon_{reg}$.
It thus remains to derive the asserted estimates for the
mass ratios and the tilt excess.
To this end, we distinguish between the three natural cases.

\textit{Case 1: $x_0 \in \supp\mu \cap \Omega \cap \{|\xi| \leq 1/2\}$.}
We start estimating by a union bound and 
recalling~\eqref{def:multiplicity}--\eqref{eq:valuesMultiplicity}
\begin{equation}
\label{eq:unionBoundMass}
\begin{aligned}
|\mu|_{\mathbb{S}^{d-1}}\big(B_{\rho_{reg}}(x_0)\big)
&\leq |\mu|_{\mathbb{S}^{d-1}}\big(B_{\rho_{reg}}(x_0) \cap \p\Omega\big)
\\&~~~
+ |\mu|_{\mathbb{S}^{d-1}}\big(B_{\rho_{reg}}(x_0) \cap \Omega \cap \{\varrho \leq 1/3\}\big) 
\\&~~~
+ |\mu|_{\mathbb{S}^{d-1}}\big(B_{\rho_{reg}}(x_0) \cap \Omega \cap \{\varrho = 1\}\big).
\end{aligned}
\end{equation}
Recalling also~\eqref{eq:repRelEntropy2} and
from property~\eqref{eq:flatness3} that $B_{\rho_{reg}}(x_0) \subset \{|\xi| \leq \frac{3}{4}\}$, 
we get
\begin{equation}
\label{eq:localGraphProperty}
\begin{aligned}
&|\mu|_{\mathbb{S}^{d-1}}\big(B_{\rho_{reg}}(x_0) \cap \p\Omega\big)
+ |\mu|_{\mathbb{S}^{d-1}}\big(B_{\rho_{reg}}(x_0) \cap \Omega \cap \{\varrho \leq 1/3\}\big)
\\&~~~
\leq |\mu|_{\mathbb{S}^{d-1}}(\p\Omega) + 
\frac{3}{2} \int_{\Omega \cap \{\varrho \leq \frac{1}{3}\}} 1 - \varrho \,d|\mu|_{\mathbb{S}^{d-1}}
%\\&~~~
\leq \frac{3}{2} E_{rel}[A,\mu|\A]
\end{aligned}
\end{equation}
and
\begin{equation}
\label{eq:localGraphProperty2}
\begin{aligned}
&|\mu|_{\mathbb{S}^{d-1}}\big(B_{\rho_{reg}}(x_0) \cap \Omega \cap \{\varrho = 1\}\big)
\\&~~~
\leq \int_{\p^*A\cap\Omega\cap\{|\xi|\leq 3/4\}} 1 \,d\mathcal{H}^{d-1}
\\&~~~
\leq 4 \int_{\p^*A\cap\Omega} (1 - \n_{\p^*A}\cdot\xi)\,d\mathcal{H}^{d-1}
\leq 4 E_{rel}[A,\mu|\A].
\end{aligned}
\end{equation}
Hence, the previous three displays together with assumption~\eqref{def:goodTimesRepeated2} imply
\begin{align}
\label{eq:localGraphProperty3}
\frac{|\mu|_{\mathbb{S}^{d-1}}(B_{\rho_{reg}}(x_0))}{\omega_{d-1}\rho_{reg}^{d-1}}
\leq \frac{11}{2} \frac{\widetilde{C}^{d-1}C_\Lambda^{d-1}}
{\omega_{d-1}\varepsilon^{2(d-1)}} \frac{1}{M}
\stackrel{M \geq M_0 \gg_{\A,C_\Lambda} 1}{\leq} \varepsilon^2.
\end{align}
Let now $\widehat{T} \in \mathbf{G}(d,d{-}1)$ arbitrary but fixed.
We may then analogously ensure by choosing $M \geq M_0 \gg_{\A,C_\Lambda} 1$ that
\begin{align}
\label{eq:localGraphProperty4}
E_{tilt}[x_0,\rho_{reg},\widehat{T}] \leq \varepsilon^2
\end{align} 
because of the simple observation that
$E_{tilt}[x_0,\rho_{reg},\widehat{T}] \leq C_{tilt}
\frac{|\mu|_{\mathbb{S}^{d-1}}(B_{\rho_{reg}}(x_0))}{\omega_{d-1}\rho_{reg}^{d-1}}$
for some absolute constant $C_{tilt}\in(1,\infty)$.

\textit{Case 2: $x_0 \in \supp\mu \cap \p\Omega$.}
Due to \cite[Lemma~4.2]{Kagaya2017}, $\rho_{reg} \leq \ell/8$,
\eqref{eq:no_contact_points} and~\eqref{eq:min_assumption6},
we know that
\begin{equation}
\begin{aligned}
\label{eq:localGraphProperty5}
\widetilde{B}_{\rho_{reg}}(x_0) \subset B_{5\rho_{reg}}(x_0)
\subset B_{\ell}(\p\Omega) \subset \{|\xi| = 0\}.
\end{aligned}
\end{equation}
Hence, arguing essentially analogous to the previous case shows
\begin{align}
\label{eq:localGraphProperty6}
|\mu|_{\mathbb{S}^{d-1}}(B_{\rho_{reg}}(x_0)) 
+ |\mu|_{\mathbb{S}^{d-1}}(\widetilde{B}_{\rho_{reg}}(x_0))
\leq 3 E_{rel}[A,\mu|\A],
\end{align}
so that a suitable choice of $M \geq M_0 \gg_{\A,C_\Lambda} 1$
allows to guarantee
\begin{align}
\label{eq:localGraphProperty7}
\frac{|\mu|_{\mathbb{S}^{d-1}}(B_{\rho_{reg}}(x_0)) 
+ |\mu|_{\mathbb{S}^{d-1}}(\widetilde{B}_{\rho_{reg}}(x_0))}
{\omega_{d-1}\rho_{reg}^{d-1}}
\leq \varepsilon^2
\end{align}
as well as
\begin{align}
\label{eq:localGraphProperty8}
E_{tilt}[x_0,\rho_{reg},\mathrm{Tan}_{x_0}^\perp\p\Omega] \leq \varepsilon^2.
\end{align}

\textit{Case 3: $x_0 \in \supp\mu \cap \Omega \cap \{|\xi| > 1/2\}$.}
By~\eqref{eq:unionBoundMass}, \eqref{eq:localGraphProperty} 
and~\eqref{def:multiplicity} it holds
\begin{align}
\label{eq:localGraphProperty9}
|\mu|_{\mathbb{S}^{d-1}}\big(B_{\rho_{reg}}(x_0)\big)
\leq \frac{3}{2} E_{rel}[A,\mu|\A] + \int_{\p^*A \cap B_{\rho_{reg}}(x_0)} 1 \,d\mathcal{H}^{d-1}.
\end{align}
Note that indeed $\p^*A \cap B_{\rho_{reg}}(x_0) = \p^*A \cap B_{\rho_{reg}}(x_0) \cap \Omega$
since $x_0 \in \Omega \cap \{|\xi|>1/2\}$ implies $B_{\rho_{reg}}(x_0) \subset B_{\ell}(\p\A) \subset \Omega$
due to~\eqref{eq:min_assumption6}, $\rho_{reg} \leq \ell/8$ and~\eqref{eq:no_contact_points}.

To estimate the second term on the right hand side of~\eqref{eq:localGraphProperty9},
first introduce some additional notation. Define $T_{x_0} := P_{\p\A}(x_0) + \mathrm{Tan}_{P_{\p\A}(x_0)}\p\A$
and let $P_{x_0}$ the nearest point projection onto~$T_{x_0}$. For every $x \in \Rd[d]$,
we further denote by $A_{P_{x_0}(x)}$ the one-dimensional slice
$A \cap \{P_{x_0}(x) + s\n_{\p\A}(P_{\p\A}(x_0)) \colon |s| \leq \ell\}$.
Finally, define the sets
\begin{align}
\label{eq:localGraphProperty10}
S^{(1)}_{x_0} &:= \p^*A \cap B_{\rho_{reg}}(x_0) 
\cap \{x \in \Rd[d] \colon \mathcal{H}^0(\p^* A_{P_{x_0}(x)}) > 1\},
\\
\label{eq:localGraphProperty11}
S^{(2)}_{x_0} &:= \bigg\{\big(\p^*A \cap B_{\rho_{reg}}(x_0)\big) \setminus S^{(1)}_{x_0} 
\colon \n_{\p^*A}(x)\cdot\xi(x) \geq \frac{1 {+} \varepsilon/4}{1 {+} \varepsilon/2}\bigg\},
\\
\label{eq:localGraphProperty12}
S^{(3)}_{x_0} &:= \bigg\{\big(\p^*A \cap B_{\rho_{reg}}(x_0)\big) \setminus S^{(1)}_{x_0} 
\colon \n_{\p^*A}(x)\cdot\xi(x) < \frac{1 {+} \varepsilon/4}{1 {+} \varepsilon/2}\bigg\}.
\end{align}
In particular,
\begin{align}
\label{eq:localGraphProperty13}
\p^*A \cap B_{\rho_{reg}}(x_0) = S^{(1)}_{x_0} \cup S^{(2)}_{x_0} \cup S^{(3)}_{x_0}
\end{align}
and, by definition of~$S^{(1)}_{x_0}$ and~$S^{(2)}_{x_0}$
as well as by the flatness property~\eqref{eq:flatness4},
\begin{align}
\label{eq:localGraphProperty14}
x \in S^{(2)}_{x_0}
\quad \Longrightarrow \quad
\mathcal{H}^0(\p^* A_{P_{x_0}(x)}) = 1
\text{ and } \big|\n_{\p^*A}(x)\cdot\n_{\p\A}(P_{\p\A}(x_0))\big|
\geq \frac{1}{1 {+} \varepsilon/2}.
\end{align}

Now, we estimate term by term in the decomposition~\eqref{eq:localGraphProperty13}.
First, a slicing argument ensures
\begin{align}
\label{eq:localGraphProperty15}
\mathcal{H}^{d-1}(S^{(1)}_{x_0}) \leq C_{rel} E_{rel}[A,\mu|\A]
\end{align}
for some universal constant $C_{rel} \in (1,\infty)$.
Second, one immediately deduces that
\begin{align}
\label{eq:localGraphProperty16}
\mathcal{H}^{d-1}(S^{(3)}_{x_0}) \leq \frac{4(1 {+} \varepsilon/2)}{\varepsilon} 
\int_{\p^*A \cap \Omega} (1 - \n_{\p^*A}\cdot\xi) \,d\mathcal{H}^{d-1}
\leq \frac{4(1 {+} \varepsilon/2)}{\varepsilon} E_{rel}[A,\mu|\A].
\end{align}
Third, by coarea formula and~\eqref{eq:localGraphProperty14}
\begin{align}
\label{eq:localGraphProperty17}
\mathcal{H}^{d-1}(S^{(2)}_{x_0}) \leq (1 {+} \varepsilon/2) \mathcal{H}^{d-1}\big(B_{\rho_{reg}}(P_{\p\A}(x_0))\big)
= (1 {+} \varepsilon/2)\omega_{d-1}\rho_{reg}^{d-1}.
\end{align}

In summary, we may now infer from~\eqref{eq:localGraphProperty9},
\eqref{eq:localGraphProperty13} and~\eqref{eq:localGraphProperty15}--\eqref{eq:localGraphProperty17}
that
\begin{align}
\label{eq:localGraphProperty18}
|\mu|_{\mathbb{S}^{d-1}}\big(B_{\rho_{reg}}(x_0)\big)
\leq (1 {+} \varepsilon/2)\omega_{d-1}\rho_{reg}^{d-1}
+ \Big(\frac{3}{2} + C_{rel} + \frac{4(1 {+} \varepsilon/2)}{\varepsilon}\Big) E_{rel}[A,\mu|\A]
\end{align}
for some universal constant $C_{rel} \in (1,\infty)$. Hence, choosing 
$M \geq M_0 \gg_{\A,C_\Lambda} 1$
in a suitable manner entails by assumption~\eqref{def:goodTimesRepeated2}
\begin{align}
\label{eq:localGraphProperty19}
|\mu|_{\mathbb{S}^{d-1}}\big(B_{\rho_{reg}}(x_0)\big) \leq (1 {+} \varepsilon)
\omega_{d-1}\rho_{reg}^{d-1}. 
\end{align}

Having established the desired estimate on the mass ratio, we now turn to the asserted
bound for the tilt excess and claim that, upon suitably choosing~$M_0$,
\begin{align}
\label{eq:localGraphProperty20}
E_{tilt}[x_0,\rho_{reg},T_{x_0}] \leq \varepsilon^2. 
\end{align}
The argument in favor of~\eqref{eq:localGraphProperty20} is very close
to the one producing~\eqref{eq:localGraphProperty19}. 

First, employing the decomposition underlying~\eqref{eq:unionBoundMass},
we obtain as the analogue of~\eqref{eq:localGraphProperty9}
\begin{equation}
\begin{aligned}
\label{eq:localGraphProperty21}
&E_{tilt}[x_0,\rho_{reg},T_{x_0}] 
\\
&\leq
C_{tilt} \frac{E_{rel}[A,\mu|\A]}{\rho_{reg}^{d-1}}
%\\&~~~
+ \frac{1}{\rho_{reg}^{d-1}} \int_{\p^*A \cap B_{\rho_{reg}}(x_0)} 
\big(1 - \n_{\p^*A}(x)\cdot\n_{\p\A}\big(P_{\p\A}(x_0)\big)\big) \,d\mathcal{H}^{d-1}(x),
\end{aligned}
\end{equation}
where $C_{tilt} \in (1,\infty)$ is a universal constant.
For an estimate of the second term on the right hand side of the previous display,
we again make use of the decomposition~\eqref{eq:localGraphProperty10}--\eqref{eq:localGraphProperty13}.
Hence, due to the estimates~\eqref{eq:localGraphProperty15} and~\eqref{eq:localGraphProperty16},
\begin{equation}
\begin{aligned}
\label{eq:localGraphProperty22}
&\frac{1}{\rho_{reg}^{d-1}} \int_{\p^*A \cap B_{\rho_{reg}}(x_0)} 
\big(1 - \n_{\p^*A}(x)\cdot\n_{\p\A}\big(P_{\p\A}(x_0)\big)\big) \,d\mathcal{H}^{d-1}(x)
\\&~~~
\leq \frac{2}{\rho_{reg}^{d-1}}
\big(\mathcal{H}^{d-1}(S^{(1)}_{x_0}) + \mathcal{H}^{d-1}(S^{(3)}_{x_0})\big)
\\&~~~~~~
+ \frac{1}{\rho_{reg}^{d-1}} \int_{S^{(2)}_{x_0}} 
\big(1 - \n_{\p^*A}(x)\cdot\n_{\p\A}\big(P_{\p\A}(x_0)\big)\big) \,d\mathcal{H}^{d-1}(x)
\\&~~~
\leq 2\Big(C'_{tilt} + \frac{4(1 {+} \varepsilon/2)}{\varepsilon}\Big)
\frac{E_{rel}[A,\mu|\A]}{\rho_{reg}^{d-1}}
\\&~~~~~~
+ \frac{1}{\rho_{reg}^{d-1}} \int_{S^{(2)}_{x_0}} 
\big(1 - \n_{\p^*A}(x)\cdot\n_{\p\A}\big(P_{\p\A}(x_0)\big)\big) \,d\mathcal{H}^{d-1}(x)
\end{aligned}
\end{equation}
for some universal constant $C'_{tilt} \in (1,\infty)$. Furthermore,
adding zero and estimating based on~\eqref{eq:localGraphProperty17}
and~\eqref{eq:flatness4} yields
\begin{equation}
\begin{aligned}
\label{eq:localGraphProperty23}
&\frac{1}{\rho_{reg}^{d-1}} \int_{S^{(2)}_{x_0}} 
\big(1 - \n_{\p^*A}(x)\cdot\n_{\p\A}\big(P_{\p\A}(x_0)\big)\big) \,d\mathcal{H}^{d-1}(x)
\\&~~~
\leq \frac{E_{rel}[A,\mu|\A]}{\rho_{reg}}
+ \frac{1}{\rho_{reg}^{d-1}} \int_{S^{(2)}_{x_0}} 
\big|\xi(x) - \n_{\p\A}\big(P_{\p\A}(x_0)\big)\big| \,d\mathcal{H}^{d-1}(x)
\\&~~~
\leq \frac{E_{rel}[A,\mu|\A]}{\rho_{reg}^{d-1}}
+ \frac{\varepsilon^2}{4}(1 {+} \varepsilon/2)\omega_{d-1}.
\end{aligned}
\end{equation}
In summary, we obtain from the previous three displays that
\begin{align}
\label{eq:localGraphProperty24}
E_{tilt}[x_0,\rho_{reg},T_{x_0}] 
\leq 2\Big(C''_{tilt} + \frac{4(1 {+} \varepsilon/2)}{\varepsilon}\Big)
\frac{E_{rel}[A,\mu|\A]}{\rho_{reg}^{d-1}} + \frac{3}{4}\varepsilon^2
\end{align}
for some universal constant $C''_{tilt} \in (1,\infty)$.
Hence, we may infer~\eqref{eq:localGraphProperty20} from~\eqref{eq:localGraphProperty24}
after suitably selecting $M \geq M_0 \gg_{\A,C_\Lambda} 1$.

\textit{Conclusion:} Since $\varepsilon \leq \varepsilon_{reg}$
by~\eqref{def:epsilon}, the tilt excess estimates
required by~\eqref{eq:smallnessGraph1},
\eqref{eq:assumptionsIntRegularity}
and~\eqref{eq:assumptionsBoundaryRegularity}
as well as the mass ratio estimates required 
by~\eqref{eq:assumptionsIntRegularity}
and~\eqref{eq:assumptionsBoundaryRegularity}
follow from~\eqref{eq:localGraphProperty3}--\eqref{eq:localGraphProperty4},
\eqref{eq:localGraphProperty7}--\eqref{eq:localGraphProperty8}
and~\eqref{eq:localGraphProperty19}--\eqref{eq:localGraphProperty20},
respectively. This eventually concludes the proof.
\end{proof}

\begin{proof}[Proof of Lemma~\ref{lem:structureVarifold}]
The proof is split into two steps.

\textit{Step 1:} 
We first claim that
\begin{align}
\label{eq:noBoundaryIntersection}
\supp\mu \cap \p\Omega = \emptyset.
\end{align}
(Note that the weaker property $|\mu|_{\mathbb{S}^{d-1}}(\p\Omega)=0$ is immediate from
Lemma~\ref{lem:localGraphProperty} and the remark at the end of
the proof of Theorem~\ref{theo:allard_reg}).

For a proof of~\eqref{eq:noBoundaryIntersection}, we argue
by contradiction and assume that there exists $x_0 \in \supp\mu \cap \p\Omega$.
Thanks to Lemma~\ref{lem:localGraphProperty}, the conclusions of Theorem~\ref{theo:allard_reg}
item~ii) apply, i.e., $\supp\mu \cap B_{\gamma_{reg}\rho_{reg}}(x_0)$ admits
a graph representation with associated height function~$u$ in the precise sense
of~\eqref{eq:graphRepVarifold2}--\eqref{eq:estimatesGraphVarifold2}. 
Due to the first estimate of~\eqref{eq:estimatesGraphVarifold2},
we infer that for $\alpha_{reg} \in (0,\frac{\pi}{2})$ defined by
$C_{reg}\varepsilon_{reg}^{\frac{1}{2(d{-}1)+3}} =: \tan(\alpha_{reg})$ it holds
$P_{x_0 {+} \mathrm{Tan}_{x_0}^\perp\p\Omega}(\supp\mu \cap B_{\gamma_{reg}\rho_{reg}}(x_0))
\supset \big((x_0 {+} \mathrm{Tan}_{x_0}^\perp\p\Omega) \cap B_{\gamma_{reg}\rho_{reg}\cos(\alpha_{reg})}(x_0)\big)$,
so that in particular
\begin{align}
\label{eq:contradictionLowerBound}
|\mu|_{\mathbb{S}^{d-1}}\big(B_{\gamma_{reg}\rho_{reg}}(x_0)\big)
\geq \omega_{d-1} \big(\gamma_{reg}\rho_{reg}\cos(\alpha_{reg})\big)^{d-1}.
\end{align}
By~\eqref{eq:min_assumption6}
and~\eqref{eq:no_contact_points}, we also know that 
$|\mu|_{\mathbb{S}^{d-1}}(\supp\mu \cap B_{\gamma_{reg}\rho_{reg}}(x_0))
\leq E_{rel}[A,\mu|\A]$, so that assumption~\eqref{def:goodTimesRepeated2} and
a suitable choice $M \gg_{\A,C_\Lambda} 1$ imply 
\begin{align}
\label{eq:contradictionUpperBound}
|\mu|_{\mathbb{S}^{d-1}}\big(B_{\gamma_{reg}\rho_{reg}}(x_0)\big)
\leq \frac{1}{2} \omega_{d-1} \big(\gamma_{reg}\rho_{reg}\cos(\alpha_{reg})\big)^{d-1},
\end{align}
thus contradicting~\eqref{eq:contradictionLowerBound}.

\textit{Step~2:} By compactness of~$\supp\mu$ and~$\p\Omega$ it now follows from
the first step that $\dist(\supp\mu,\p\Omega) > 0$. Hence, 
the second identity of~\eqref{eq:genMeanCurvatureFullSpace}
is immediate from~\eqref{eq:genMeanCurvatureTangential}, and 
based on that, one obtains~\eqref{eq:repMassMeasureVarifold} simply
from~\eqref{eq:unitDensity} and~\cite[Definition~3, item~(ii)]{Hensel2022}.
Since the canonically associated general varifold~$\widehat{\mu}$ of~$\mu$
is $(d{-}1)$-integer rectifiable, the first identity of~\eqref{eq:genMeanCurvatureFullSpace}
follows now in turn from~\eqref{eq:repMassMeasureVarifold} and 
$|\widehat{\mu}|_{\mathbf{G}(d,d{-}1)}=|\mu|_{\mathbb{S}^{d-1}}$.

It remains to verify the first inclusion of~\eqref{eq:closeToStrongSolution}.
To this end, one may exploit a contradiction argument
being	 essentially analogous to the one conducted in Step~1 of this proof. Indeed,
one works with the conclusions of Theorem~\ref{theo:allard_reg} item~i)
instead of the ones from item~ii), which are applicable due to the already
established validity of~\eqref{eq:genMeanCurvatureTangential} and Lemma~\ref{lem:localGraphProperty},
and exploits in addition property~\eqref{eq:flatness3} for $\rho = \gamma_{reg}\rho_{reg}$ 
to ensure that $|\mu|_{\mathbb{S}^{d-1}}(B_{\gamma_{reg}\rho_{reg}}(x_0)) \leq 4 E_{rel}[\mu,A|\A]$
for any $x_0 \in \{|\xi| \leq 1/2\}$.
\end{proof}

\begin{proof}[Proof of Lemma~\ref{lem:graphCandidate}]
Denote by $(\mathscr{J}_k)_{k=1,\ldots,K}$ the finitely
many connected components of~$\p\A$.
We now argue in four steps.

\textit{Step 1:} We claim that for each $k \in \{1,\ldots,K\}$
it holds
\begin{align}
\label{eq:nontrivialInterface}
\mathcal{H}^{d-1}\big(\p^*A \cap B_{\ell}(\mathscr{J}_k)\big) > 0.
\end{align}
If this is not the case, fix $\bar{k} \in \{1,\ldots,K\}$ with
$\mathcal{H}^{d-1}(\p^*A \cap B_{\ell}(\mathscr{J}_{\bar{k}})) = 0$. Let $\A_{\bar{k}} \subset \A$
such that $\p\A_{\bar{k}}=\mathscr{J}_{\bar{k}}$, so that by~\eqref{eq:closeToStrongSolution}
and~\eqref{eq:min_assumption6}
it holds $$\mathcal{H}^{d-1}(\p^*A \cap \{\dist(\cdot,\A_{\bar{k}})<\ell)\}) = 0.$$ Hence,
by relative isoperimetric inequality $\mathrm{vol}(A \cap \{\dist(\cdot,\A_{\bar{k}})<\ell)\}) = 0$
which in turn implies
\begin{align}
\|\chi_A {-} \chi_{\A}\|_{L^1}^2 \geq \mathrm{vol}(\A_{\bar{k}})^2.
\end{align}
However, it also holds 
\begin{align}
\label{eq:coercivityBulkError}
\|\chi_A {-} \chi_{\A}\|_{L^1}^2 \leq C_{vol}\ell^{d{+}1}E_{vol}[A|\A]
\end{align}
for some universal constant $C_{vol} \in (1,\infty)$. Hence, assumption~\eqref{def:goodTimesRepeated2}
together with the previous two displays guarantees $\min\{\mathrm{vol}(\A_k)^2\colon k=1,\ldots,K\}
\leq \frac{C_{vol}}{M}\ell^{2d}$, so that choosing $M \gg_{\A} 1$ sufficiently large 
yields a contradiction.

\textit{Step 2:} 
Fix $k \in \{1,\ldots,K\}$. We claim that for all $y_0 \in \mathscr{J}_k$
there exists a local height function 
\begin{align}
\label{eq:regularityLocalHeightFunction}
h_{y_0}\colon \mathscr{J}_k \cap B_{\widetilde{C}^{-1}\gamma_{reg}\rho_{reg}}(y_0)
\to (-\ell,\ell) \text{ of class } C^{1,1-\frac{d{-}1}{4}} \cap H^2 
\end{align}
such that
\begin{equation}
\begin{aligned}
\label{eq:step2aux1graph}
&\Big\{y {+} h_{y_0}(y)\n_{\p\A}(y) \colon y \in B_{\widetilde{C}^{-1}\gamma_{reg}\rho_{reg}}(y_0) \cap \mathscr{J}_k\Big\}
%\\&~~~~~~~~~~~~~~~~~~~~~~~~~~~
\subset\p^*A,
\end{aligned}
\end{equation}
where $\widetilde{C}$ is the constant from~\eqref{def:epsilon}. In particular, for all $y_0 \in \mathscr{J}_k$
\begin{align}
\label{eq:step2aux2graph}
\#\big\{s \in (-\ell,\ell) \colon y_0 {+} s \n_{\p\A}(y_0) \in \p^*A \big\} \geq 1.
\end{align}

For a proof of~\eqref{eq:regularityLocalHeightFunction}--\eqref{eq:step2aux1graph}, we first choose
$\widetilde{x} \in \p^*A \cap B_{\ell}(\mathscr{J}_k)$,
where the latter set is indeed non-empty due to the first step.
By~\eqref{eq:genMeanCurvatureFullSpace} and Lemma~\ref{lem:localGraphProperty}, we may
apply Theorem~\ref{theo:allard_reg} item~i) meaning there exists a $C^{1,1-\frac{d{-}1}{4}}$ function
$u\colon (\widetilde{x} {+} \mathrm{Tan}_{\widetilde{y}}\p\A) 
\cap B_{\gamma_{reg}\rho_{reg}}(\widetilde{x}) 
\to \mathbb{R}$, where $\widetilde{y} := P_{\p\A}(\widetilde{x})$,
such that $\p^*A$ can be represented
within $B_{\gamma_{reg}\rho_{reg}}(\widetilde{x})$ by means of~$u$ in the sense of~\eqref{eq:graphRepVarifold}
and that, thanks to~\eqref{eq:estimatesGraphVarifold} and~\eqref{eq:smallnessGraph1}--\eqref{eq:smallnessGraph2}, 
it holds 
\begin{align}
\label{eq:lipschitzCone}
\sup |\nabla_{\widetilde{x} {+} \mathrm{Tan}_{\widetilde{y}}\p\A} u| \leq 3C_{reg}\varepsilon =: \tan\alpha_{reg}.
\end{align}
In particular,
\begin{align}
\label{eq:domainParametrization}
P_{\widetilde{x} {+} \mathrm{Tan}_{\widetilde{y}}\p\A}\big(\p^*A \cap B_{\gamma_{reg}\rho_{reg}}(\widetilde{x})\big)
\supset \big((\widetilde{x} {+} \mathrm{Tan}_{\widetilde{y}}\p\A) 
\cap B_{(\cos\alpha_{reg})\gamma_{reg}\rho_{reg}}(\widetilde{x})\big).
\end{align}

In preparation for the proof of the regularity claim of~\eqref{eq:regularityLocalHeightFunction},
we first show that
\begin{align}
\label{eq:regularityU}
u \in (C^{1,1-\frac{d{-}1}{4}} \cap H^2)
\big((\widetilde{x} {+} \mathrm{Tan}_{\widetilde{y}}\p\A) 
\cap B_{\frac{1}{4}(\cos\alpha_{reg})\gamma_{reg}\rho_{reg}}(\widetilde{x})\big),
\end{align}
where only the asserted second-order Sobolev regularity requires further attention.
In spirit of Subsection~\ref{subsec:estimate_dissipative_terms}, let us introduce
a change of variables
\begin{equation}
\begin{aligned}
\Psi^u\colon \underbrace{\overbrace{(\widetilde{x} {+} \mathrm{Tan}_{\widetilde{y}}\p\A)}^{\mathrm{T}_{\widetilde{x}}} 
\cap B_{(\cos\alpha_{reg})\gamma_{reg}\rho_{reg}}(\widetilde{x})}_{=: \mathcal{U}_{\widetilde{x}}}
&\to \p^*A \cap B_{\gamma_{reg}\rho_{reg}}(\widetilde{x})
\\
x &\mapsto x + u(x) \n_{\p\A}(\widetilde{y}).
\end{aligned}
\end{equation}
Defining $a\colon (-1,\infty) \to \mathbb{R},\,r\mapsto \frac{1}{\sqrt{1+r}}$,
we will derive the claim $u \in H^2(\frac{1}{4}\mathcal{U}_{\widetilde{x}})$
from two things: (i) within~$\frac{1}{2}\mathcal{U}_{\widetilde{x}}$, $u$ is a weak solution of
\begin{align}
\label{eq:pdeU}
\nabla_{\mathrm{T}_{\widetilde{x}}} 
\cdot \big( a(|\nabla_{\mathrm{T}_{\widetilde{x}}} u|^2)
\nabla_{\mathrm{T}_{\widetilde{x}}} u\big)
= \big(\mathrm{H} \cdot \n_{\p^*A}\big) \circ \Psi^u 
\in L^2\Big(\frac{1}{2}\mathcal{U}_{\widetilde{x}}\Big),
\end{align}
and~$(ii)$ a standard difference quotient argument
applied to~\eqref{eq:pdeU}, where the non-linearity in~\eqref{eq:pdeU} turns out to represent
no obstruction due to an absorption argument facilitated by the estimate~\eqref{eq:lipschitzCone}
and the smallness of~$\varepsilon$, see~\eqref{def:epsilon}. 

In order to recover~\eqref{eq:pdeU},
fix $\varsigma \in C^\infty_{cpt}(\mathrm{T}_{\widetilde{x}})$
with $\supp \varsigma \subset \frac{1}{2}\mathcal{U}_{\widetilde{x}}$,
fix $\overline{\upsilon} \in C^\infty(\mathbb{R};[0,1])$ and~$c_{\alpha_{reg}} \in (0,1)$
with $\supp\overline{\upsilon}  \subset [-(1{+}c_{\alpha_{reg}}),(1{+}c_{\alpha_{reg}})]$
and $\overline{\upsilon} \equiv 1$ on $[-1,1]$ such that the cutoff $(\varsigma\circ P_{\mathrm{T}_{\widetilde{x}}})\upsilon$, where
$\upsilon(x):=\overline{\upsilon}(\frac{\dist(x,\mathrm{T}_{\widetilde{x}})}{\frac{1}{2}(\sin\alpha_{reg})\gamma_{reg}\rho_{reg}})$,
is supported within the ball~$B_{\gamma_{reg}\rho_{reg}}(\widetilde{x})$.
Denoting by $P_{\mathrm{T}_{\widetilde{x}}}$ the nearest-point
projection onto~$\mathrm{T}_{\widetilde{x}}$,
the idea now is to test~\eqref{eq:genMeanCurvatureFullSpace}
with $B_{\varsigma} = (\varsigma\circ P_{\mathrm{T}_{\widetilde{x}}})\upsilon\n_{\p\A}(\widetilde{y})
\in C^\infty_{cpt}(B_{\gamma_{reg}\rho_{reg}}(\widetilde{x}))$
and subsequently represent integrals over~$\p^*A$ as integrals over~$\mathcal{U}_{\widetilde{x}}$.

As $d\mathcal{H}^{d-1} \llcorner (\p^*A \cap B_{\gamma_{reg}\rho_{reg}}(\widetilde{x}))
= \frac{1}{(\n_{\p^*A}\circ\Psi^{u})\cdot\n_{\p\A}(\widetilde{y})} 
d\mathcal{H}^{d-1} \llcorner P_{\mathrm{T}_{\widetilde{x}}}(\p^*A \cap B_{\gamma_{reg}\rho_{reg}}(\widetilde{x}))$
and $\mathrm{H}=(\mathrm{H}\cdot\n_{\p^*A})\n_{\p^*A}$, the right hand side of~\eqref{eq:genMeanCurvatureFullSpace}
for the test function~$B_{\varsigma}$ is given by
\begin{align}
\label{eq:RHSaux}
- \int_{\p^*A} \vec{H} \cdot B_{\varsigma} \,d\mathcal{H}^{d-1}
= - \int_{\frac{1}{2}\mathcal{U}_{\widetilde{x}}} 
\varsigma \big(\mathrm{H} \cdot \n_{\p^*A}\big) \circ \Psi^u \,d\mathcal{H}^{d-1}.
\end{align}
By the properties of the cut-off functions, we further compute on $\p^*A \cap B_{\gamma_{reg}\rho_{reg}}(\widetilde{x})$
\begin{align}
\nabla B = \n_{\p\A}(\widetilde{y}) \otimes (\nabla_{\mathrm{T}_{\widetilde{x}}}\varsigma) \circ P_{\mathrm{T}_{\widetilde{x}}}.
\end{align}
Moreover, throughout $\mathcal{U}_{\widetilde{x}}$ it holds
\begin{align}
\n_{\p^*A}\circ\Psi^u = \frac{1}{\sqrt{1 + |\nabla_{\mathrm{T}_{\widetilde{x}}} u|^2}}
\big(\n_{\p\A}(\widetilde{y}) - \nabla_{\mathrm{T}_{\widetilde{x}}} u\big).
\end{align}
Due to the previous two displays and the already mentioned
representation of the coarea factor, the left hand side of~\eqref{eq:genMeanCurvatureFullSpace}
for the test function~$B_{\varsigma}$ is given by
\begin{align}
\label{eq:LHSaux}
\int_{\p^*A} (\mathrm{Id} {-} \n_{\p^*A}\otimes \n_{\p^*A}):\nabla B_{\varsigma} \,d\mathcal{H}^{d-1}
= \int_{\frac{1}{2}\mathcal{U}_{\widetilde{x}}} 
a(|\nabla_{\mathrm{T}_{\widetilde{x}}} u|^2)
\nabla_{\mathrm{T}_{\widetilde{x}}} u \cdot \nabla_{\mathrm{T}_{\widetilde{x}}} \varsigma\,d\mathcal{H}^{d-1}.
\end{align}
In summary, \eqref{eq:RHSaux} and~\eqref{eq:LHSaux} imply the claim~\eqref{eq:pdeU}.
Performing now a difference quotient argument for the PDE~\eqref{eq:pdeU}
(see, e.g., \cite[Section~4.3]{Giaquinta2012}), one observes that in this scheme the term
for which the difference quotient operates on the coefficient~$a(|\nabla_{\mathrm{T}_{\widetilde{x}}} u|^2)$
is a perturbative one with respect to the $L^2$-norm of the difference quotient of~$\nabla_{\mathrm{T}_{\widetilde{x}}} u$
thanks to~\eqref{eq:lipschitzCone} and the smallness of~$\varepsilon$ (the latter in the form of~\eqref{def:epsilon}).
In other words, one obtains~\eqref{eq:regularityU} by standard arguments.

Based on the properties~\eqref{eq:lipschitzCone}--\eqref{eq:regularityU}
of the auxiliary local height function~$u$,
we now show that~\eqref{eq:regularityLocalHeightFunction}--\eqref{eq:step2aux1graph} holds
for $\widetilde{y} = P_{\p\A}(\widetilde{x})$.
Consider to this end an
arbitrary but fixed $y \in B_{\widetilde{C}^{-1}\gamma_{reg}\rho_{reg}}(\widetilde{y})$.
Because of~\eqref{eq:domainParametrization} as well as the flatness condition~\eqref{eq:flatness5} applied for
$x_0=\widetilde{x}$, $y_0=\widetilde{y}$ and $\rho=\gamma_{reg}\rho_{reg}$,
we may infer that the slice $\{y{+}s\n_{\p\A}(y)\colon s \in (-\ell,\ell)\}$
intersects~$\p^*A$ at least once in the region where the latter is locally
represented by~$u$. By~\eqref{eq:lipschitzCone}, we also know that, for each point~$x$ on the graph
induced by~$u$, the cone with axis~$\mathrm{Tan}_{\widetilde{y}}\p\A$, opening angle~$2\alpha_{reg}$
and apex sitting at~$x$ contains the whole graph induced by~$u$. However, thanks
to the flatness condition~\eqref{eq:flatness6}, this in turn simply means that
the slice $\{y{+}s\n_{\p\A}(y)\colon s \in (-\ell,\ell)\}$
intersects~$\p^*A$ exactly once in the region where the latter is locally
represented by~$u$, so that the local parametrization of~$\p^*A$
in the sense of~\eqref{eq:step2aux1graph} follows. 

Furthermore, we note that~$h_{\widetilde{y}}$ inherits the regularity
of~$u$, see~\eqref{eq:regularityU}, by means of the following argument: the map 
$\iota\colon (\widetilde{x} {+} \mathrm{Tan}_{\widetilde{y}}\p\A)  
\cap B_{(\cos\alpha_{reg})\gamma_{reg}\rho_{reg}}(\widetilde{x}) \to \mathscr{J}_k$
defined by $x \mapsto P_{\p\A}(x {+} u(x)\n_{\p\A}(\widetilde{y}))$ induces
a chart for~$\mathscr{J}_k$, so that the claim follows from the identity 
$h(\iota(x)) = s_{\p\A}(x {+} u(x)\n_{\p\A}(\widetilde{y}))$, 
$x \in \mathcal{U}_{\widetilde{x}}$, and smoothness of the signed distance function~$s_{\p\A}$.
In summary, we proved the claim of Step~2 for $y_0 = \widetilde{y}$.

Fix now $y \in \mathscr{J}_k \setminus\{\widetilde{y}\}$. As~$\mathscr{J}_k$ is connected and compact, 
the Hopf--Rinow theorem ensures that 
we may find a geodesic~$\gamma_{\widetilde{y} \to y}$ connecting the two distinct points
$\widetilde{y},\, y \in \mathscr{J}_k$. We then equipartition~$\gamma_{\widetilde{y} \to y}$
into~$N$ geodesic segments~$\gamma_{\widetilde{y}_{n-1} \to \widetilde{y}_{n}}$, $n\in\{1,\ldots,N\}$, 
where $\widetilde{y}_0 := \widetilde{y}$, $\widetilde{y}_N := y$ and $N \in \mathbb{N}$
is sufficiently large such that $\widetilde{y}_n \in B_{\widetilde{C}^{-1}\gamma_{reg}\rho_{reg}}(\widetilde{y}_{n-1}) \cap
\mathscr{J}_k$ for all $n \in \{1,\ldots,N\}$. Because of the latter condition, 
we may first transfer the conclusion of the previous argument from the starting point $\widetilde{y}_0 = \widetilde{y}$ to 
the next point~$\widetilde{y}_1$, and thus also, after finitely many iterations, to the endpoint $\widetilde{y}_N = y$.
This concludes the proof of~\eqref{eq:step2aux1graph}.
 
\textit{Step~3:} Fix again $k \in \{1,\ldots,K\}$. We now show that for all $y_0 \in \mathscr{J}_k$ it holds
\begin{align}
\label{eq:step3aux1graph}
\#\big\{s \in (-\ell,\ell) \colon y_0 {+} s \n_{\p\A}(y_0) \in \p^*A \big\} = 1.
\end{align}

We assume by contradiction that there exists $y_0 \in \mathscr{J}_k$ such that~\eqref{eq:step3aux1graph}
fails, i.e., by~\eqref{eq:step2aux2graph}, that
\begin{align}
\label{eq:step3aux2graph}
\#\big\{s \in (-\ell,\ell) \colon y_0 {+} s \n_{\p\A}(y_0) \in \p^*A \big\} > 1.
\end{align}
Hence, we may choose $x_0 \in (\p^*A \cap B_{\ell}(\mathscr{J}_k)) 
\setminus \{\widetilde{x}_0\}$, where we also defined $\widetilde{x}_0 := y_0 + h_{y_0}(y_0) \n_{\p\A}(y_0)$.
Around both points~$x_0$ and~$\widetilde{x}_0$, the local graph property of~$\p^*A$
in the precise sense of Theorem~\ref{theo:allard_reg} item~i) holds true
(with respect to the same tangent space). In particular,
$\p^*A \cap B_{\gamma_{reg}\rho_{reg}}(x_0)$ is a subset of the points within~$\p^*A \cap B_{\rho_{reg}}(x_0)$
where~$\p^*A$ is hit more than once by slicing in normal direction relative to~$y_0 {+} \mathrm{Tan}_{y_0}\p\A$
(or more precisely, relying on the notation introduced right before~\eqref{eq:localGraphProperty10},
$\p^*A \cap B_{\gamma_{reg}\rho_{reg}}(x_0) \subset S_{x_0}^{(1)}$). Hence, by a slicing argument,
\begin{align}
\label{eq:step3aux3graph}
\mathcal{H}^{d-1}(\p^*A \cap B_{\gamma_{reg}\rho_{reg}}(x_0))
\leq C_{rel} E_{rel}[A,\mu|\A] 
\end{align}
for some universal constant $C_{rel} \in (1,\infty)$. Based on a
suitably large choice of $M \gg_{\A,C_\Lambda} 1$, one may now
obtain a contradiction analogously to Step~1 of the proof of Lemma~\ref{lem:structureVarifold}
(i.e., by the analogues of~\eqref{eq:contradictionLowerBound}
and~\eqref{eq:contradictionUpperBound}).

\textit{Step~4:} Fix again $k \in \{1,\ldots,K\}$. By compactness of~$\mathscr{J}_k$,
we may select finitely many points $(\widetilde{y}_n)_{n=1,\ldots,N}$
along~$\mathscr{J}_k$ such that $\mathscr{J}_k
\subset \bigcup_{n=1}^N B_{\widetilde{C}^{-1}\gamma_{reg}\rho_{reg}}(\widetilde{y}_n) \cap \mathscr{J}_k$.
It then follows from the previous two steps that the local graph representations at 
$\widetilde{y}_1,\ldots,\widetilde{y}_N$ from Step~2 can be stitched together
in a consistent manner by a subordinate partition of unity. In other words,
there exists $J_k \subset \p^*A$ with the property that~$J_k$ can be obtained
globally as a graph over~$\mathscr{J}_k$ in the sense of~\eqref{eq:graph_representation}
with regularity~\eqref{eq:regularity_graph}.

We may now conclude because $\p^*A = \bigcup_{k=1}^{K} J_k$ due to Step~3.
\end{proof}

\begin{proof}[Proof of Lemma~\ref{lem:graphRep}]
We proceed in two steps.

\textit{Step 1: Estimate for~$\sup|h|$.}
Assume that there exists $x_0 \in \p^*A$ such that
\begin{align}
\label{eq:lowerBoundHeight}
\big|h(P_{\p\A}(x_0))\big| > \frac{\ell}{16 C}.
\end{align}
By~\eqref{eq:genMeanCurvatureFullSpace} and Lemma~\ref{lem:localGraphProperty}, we may
again apply Theorem~\ref{theo:allard_reg} item~i) to write $\supp\mu=\p^*A$ within 
$B_{\gamma_{reg}\rho_{reg}}(x_0)$ as a graph over $\mathrm{T}_{x_0} := x_0 + \mathrm{Tan}_{P_{\p\A}(x_0)}\p\A$
with height function~$u$, cf.\ \eqref{eq:graphRepVarifold}. Because of~\eqref{eq:estimatesGraphVarifold},
\eqref{eq:smallnessGraph1}--\eqref{eq:smallnessGraph2}, 
and~\eqref{def:epsilon}--\eqref{def:rho}
\begin{align}
\label{eq:estimateGraph1}
\sup |u| &\leq \rho 2 C_{reg} \varepsilon \leq \frac{1}{4} \frac{\ell}{16 C},
\\
\label{eq:estimateGraph2}
\sup |\nabla_{\mathrm{T}_{x_0}} u| &\leq 2 C_{reg} \varepsilon =: \tan\alpha,\,\alpha \in (0,\pi/2).
\end{align}
Together with the previous two estimates, the flatness property~\eqref{eq:flatness1}
implies that
\begin{align}
\|\chi_{A} {-} \chi_{\A}\|_{L^1} \geq \omega_{d-1}
\big(\gamma_{reg}\rho_{reg}\cos\alpha\big)^{d-1}
\frac{1}{2} \frac{\ell}{16 C}.
\end{align}
In combination with the coercivity estimate~\eqref{eq:coercivityBulkError}
and assumption~\eqref{def:goodTimesRepeated2}, we may therefore choose $M \gg_{\A,C_\Lambda} 1$
to reach a contradiction.

\textit{Step 2: Estimate for~$\sup|\nabla_{\p\A} h|$.} 
Consider $y_0 \in \p\A$ and define $x_0 := y_0 + h(y_0) \n_{\p\A}(y_0) \in \p^*A$.
Denote by~$\ta_1$ and~$\ta_2$ a choice of orthonormal principal curvature directions
at~$y_0$ with corresponding principal curvatures~$\kappa_1$ and~$\kappa_2$,
and let $\widetilde{\ta}_1$ and~$\widetilde{\ta}_2$ be the unique
unit length tangent vectors of~$\p^*A$ at~$x_0$ such that 
$\ta_i \cdot \widetilde{\ta}_i > 0$
and $\widetilde{\ta}_i \in \{\alpha \ta_i + \beta \n_{\p\A}(y_0)\colon \alpha,\beta \in \mathbb{R}\}$  for both $i=1,2$.
Fix now $i \in \{1,2\}$.

Locally around~$y_0$, we may choose a curve 
$\gamma_i\colon (-\delta,\delta) \to \p\A$
such that $\gamma_i(0) = y_0$ and $(\gamma_i)'(0) = \eta_i$. 
%(e.g., by arc-length parametrization
%of the local intersection between~$\p\A$ and the plane $y_0 {+}  \{\alpha \eta_i + \beta \n_{\p\A}(y_0)\colon \alpha,\beta \in \mathbb{R}\}$). 
This also induces a curve in~$\p^*A$ by
\begin{align}
&\gamma_i^h\colon (-\delta,\delta) \to \p^*A, %\cap (y_0 {+} \{\alpha \eta_i + \beta \n_{\p\A}(y_0)\colon \alpha,\beta \in \mathbb{R}\}) 
%\\
&& \theta \mapsto \gamma_i(\theta) + h(\gamma_i(\theta)) \n_{\p\A}(\gamma_i(\theta)).
\end{align}
A straightforward computation yields
\begin{align}
(\gamma_i^h)'(0) = \big(1 - h(y_0)\kappa_i\big) \ta_i + (\ta_i \cdot\nabla_{\p\A}) h(y_0) \n_{\p\A}(y_0)
\end{align}
and
\begin{align}
\frac{(\gamma_i^h)'(0)}{|(\gamma_i^h)'(0)|} \cdot \ta_i = 
\frac{1}{\sqrt{1 + \big(\frac{(\ta_i \cdot\nabla_{\p\A}) h(y_0)}{1 - h(y_0)\kappa_i}\big)^2}}.
\end{align}
In particular, 
\begin{align}
\frac{(\gamma_i^h)'(0)}{|(\gamma_i^h)'(0)|} = \widetilde{\ta}_i.
\end{align}

Instead of using the height function~$h$, we may also represent~$\p^*A$
locally around~$x_0$ by means of a height function~$u$ in the sense of~\eqref{eq:graphRepVarifold}.
Defining $\gamma_{x_0,i}(\theta) := x_0 + \theta\ta_i$, we then obtain a second induced curve in~$\p^*A$ by
\begin{align}
&\gamma_i^u\colon (-\delta,\delta) \to \p^*A, %\cap (y_0 {+} \{\alpha \eta_i + \beta \n_{\p\A}(y_0)\colon \alpha,\beta \in \mathbb{R}\}) 
%\\
&& \theta \mapsto \gamma_{x_0,i}(\theta) + u(\gamma_{x_0,i}(\theta)) \n_{\p\A}(y_0),
\end{align}
for which one may compute
\begin{align}
(\gamma_i^u)'(0) = \ta_i + (\ta_i \cdot\nabla_{\mathrm{T}_{x_0}}) u(x_0) \n_{\p\A}(y_0)
\end{align}
and
\begin{align}
\frac{(\gamma_i^u)'(0)}{|(\gamma_i^u)'(0)|} \cdot \ta_i = 
\frac{1}{\sqrt{1 + \big((\ta_i \cdot\nabla_{\mathrm{T}_{x_0}}) u(x_0)\big)^2}},
\end{align}
so that again
\begin{align}
\frac{(\gamma_i^u)'(0)}{|(\gamma_i^u)'(0)|} = \widetilde{\ta}_i.
\end{align}

In total, we obtain from these two different ways of representing~$\widetilde{\ta}_i$ that
\begin{align}
\big|(\ta_i \cdot\nabla_{\p\A}) h(y_0)\big|
= \big|1 - h(y_0)\kappa_i\big| \big|(\ta_i \cdot\nabla_{\mathrm{T}_{x_0}}) u(x_0)\big|.
\end{align}
By the previous step, $|h(y_0)| \leq \frac{1}{16}\ell$, 
so that the estimate $|\kappa_i| \leq \frac{1}{\ell}$
together with the previous identity, \eqref{eq:estimateGraph2} 
and~\eqref{def:epsilon} entails
\begin{align}
\big|(\ta_i\cdot \nabla_{\p\A})h(y_0)\big| 
\leq \frac{17}{16} \big|(\ta_i\cdot\nabla_{\mathrm{T}_{x_0}}) u(x_0)\big| \leq \frac{1}{16 C}.
\end{align}
This concludes the proof.
\end{proof}

\begin{proof}[Proof of Proposition~\ref{prop:perturbative_graph_regime}]
Immediate from Lemma~\ref{lem:structureVarifold}, Lemma~\ref{lem:graphCandidate} and Lemma~\ref{lem:graphRep}.
\end{proof}

%%%%%%%%%%%%%%%%%%%%%%%%%%%%%%%%%%%%%%%%%%%%%%%%%%%%%%%%%%%%%%%%%%%%%%%%%%%%%%%%%%%%%%%%%%%%%%%%
\section*{Acknowledgments}
Parts of the paper were developed during the visit of the authors to the Hausdorff Research Institute for Mathematics (HIM), University of Bonn, in the framework of the trimester program ``Evolution of Interfaces''. The support and the hospitality of HIM are gratefully acknowledged.
This project has also received funding from the European Union's Horizon 2020 research and 
innovation programme under the Marie Sk\l{}odowska-Curie Grant Agreement No.\ 665385 
\begin{tabular}{@{}c@{}}\includegraphics[width=3ex]{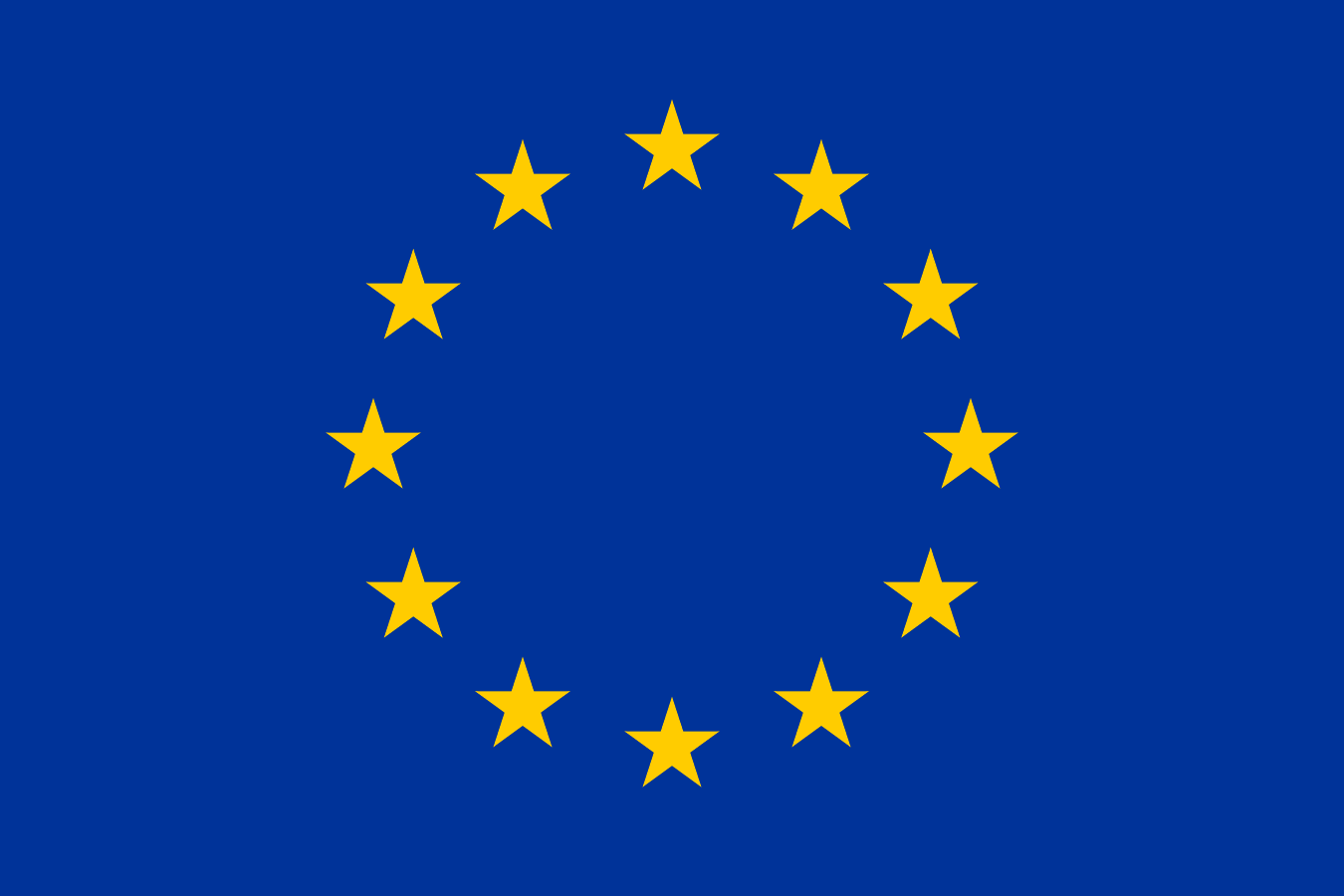}\end{tabular}, from the
European Research Council (ERC) under the European Union's Horizon 2020
research and innovation programme (grant agreement No 948819)
\smash{
\begin{tabular}{@{}c@{}}\includegraphics[width=6ex]{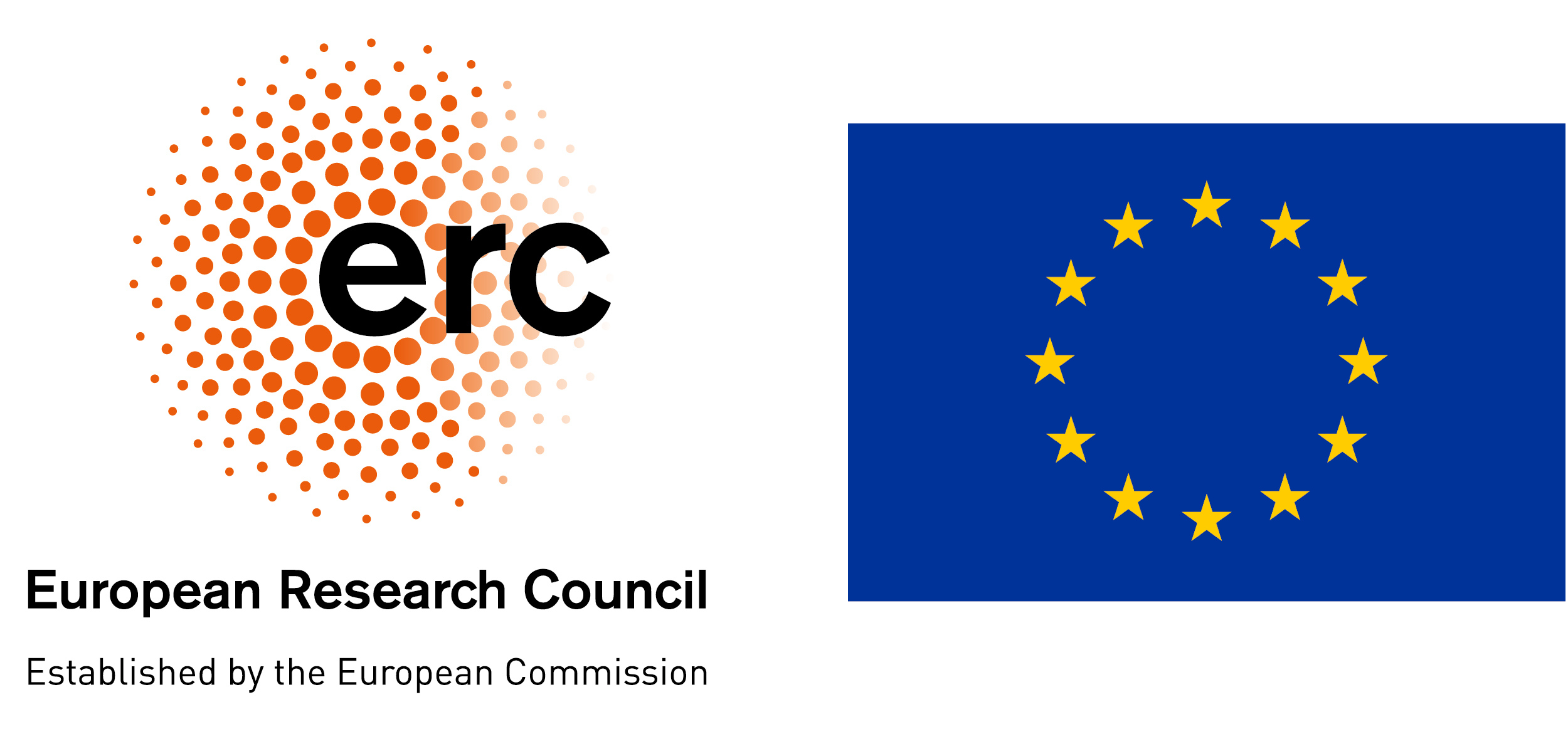}\end{tabular}
},
and from the Deutsche Forschungsgemeinschaft (DFG, German Research Foundation) 
under Germany's Excellence Strategy -- EXC-2047/1 -- 390685813
and EXC 2044--390685587, Mathematics M\"{u}nster: Dynamics--Geometry--Structure.

\bibliographystyle{abbrv}
\bibliography{mullins_sekerka_uniqueness}

\end{document}